\theoremstyle{plain}
\newtheorem{Thm}{Theorem}[section]
\newtheorem{Thm*}{Theorem}[section]
\newtheorem{Thm'}[Thm]{"Theorem"}
\newtheorem{Cor}[Thm]{Corollary}
\newtheorem{Prop}[Thm]{Proposition}
\newtheorem{Lem}[Thm]{Lemma}
\theoremstyle{definition}
\newtheorem{Emp}[Thm]{}
\numberwithin{equation}{section}
\newcommand{\nc}{\newcommand}
\nc{\lm}{\lambda}
\newcommand{\ov}{\overline}
\newcommand{\B}[1]{\mathbb#1}
\newcommand{\cal}[1]{\mathcal{#1}}
\newcommand{\C}[1]{\cal#1}
\newcommand{\sr}{\operatorname{sr}}
\newcommand{\isom}{\overset {\thicksim}{\to}}
\newcommand{\om}{\omega}
\newcommand{\si}[2]{\text{$\Sigma^{#1}_{#2}$}}
\newcommand{\Si}{\Sigma}
\newcommand{\surj}{\twoheadrightarrow}
\newcommand{\lra}{\longrightarrow}
\newcommand{\hra}{\hookrightarrow}
\newcommand{\wt}{\widetilde}
\newcommand{\Gm}{\Gamma}
\newcommand{\lan}{\langle}
\newcommand{\ran}{\rangle}
\newcommand{\g}{\mathfrak{g}}
\renewcommand{\P}{P}
\newcommand{\T}{\Theta}
\renewcommand{\Q}{\mathbf{Q}}
\newcommand{\I}{I}
\newcommand{\dt}{\delta}
\newcommand{\Dt}{\Delta}
\newcommand{\bs}{\backslash}
\newcommand{\m}{^{\times}}
\newcommand{\op}{^{\operatorname{op}}}
\newcommand{\al}{\alpha}
\newcommand{\la}{\lambda}
\newcommand{\form}[1]{(\ref{Eq:#1})}
\newcommand{\rl}[1]{Lemma \ref{L:#1}}
\newcommand{\rp}[1]{Proposition \ref{P:#1}}
\newcommand{\re}[1]{\ref{E:#1}}
\newcommand{\rco}[1]{Corollary \ref{C:#1}}
\newcommand{\rt}[1] {Theorem \ref{T:#1}}
\newcommand{\sm}{\smallsetminus}
\newcommand{\GU}[1]{\bold{GU_{#1}}}
\newcommand{\SL}[1]{\mathbf{SL_{#1}}}
\newcommand{\SU}[1]{\mathbf{SU_{#1}}}
\newcommand{\sgn}{\operatorname{sgn}}
\newcommand{\pr}{\operatorname{pr}}
\newcommand{\Ker}{\operatorname{Ker}}
\newcommand{\val}{\operatorname{val}}
\newcommand{\im}{\operatorname{Im}}
\newcommand{\R}{\operatorname{R}}
\newcommand{\Ad}{\operatorname{Ad}}
\newcommand{\Gal}{\operatorname{Gal}}
\newcommand{\Tr}{\operatorname{Tr}}
\newcommand{\Av}{\operatorname{Av}}
\newcommand{\Supp}{\operatorname{Supp}}
\newcommand{\Par}{\operatorname{Par}}
\newcommand{\nr}{\operatorname{nr}}
\newcommand{\aff}{\operatorname{aff}}
\newcommand{\Lie}{\operatorname{Lie}}
\newcommand{\ad}{\operatorname{ad}}
\newcommand{\cl}{\operatorname{cl}}
\newcommand{\Id}{\operatorname{Id}}
\renewcommand{\sc}{\operatorname{sc}}
\newcommand{\St}{\operatorname{St}}
\newcommand{\End}{\operatorname{End}}
\newcommand{\Ind}{\operatorname{Ind}}
\newcommand{\Irr}{\operatorname{Irr}}
\newcommand{\Stab}{\operatorname{Stab}}
\newcommand{\st}{\operatorname{st}}
\newcommand{\fq}{\B{F}_q}
\newcommand{\wh}{\widehat}
\renewcommand{\si}{\sigma}
\newcommand{\G}{\mathbf{G}}
\newcommand{\CX}{\C{X}}
\newcommand{\CA}{\C{A}}
\newcommand{\GL}{\mathbf{GL}}
\newcommand{\bg}{\boldsymbol{\g}}
\renewcommand{\u}{\mathfrak{u}}
\newcommand{\mm}{\mathfrak{m}}
\renewcommand{\t}{\mathfrak{t}}
\newcommand{\CG}{\bg}
\renewcommand{\H}{\mathbf{H}}
\newcommand{\X}{\mathbf{X}}
\renewcommand{\L}{\mathbf{L}}
\renewcommand{\b}{\mathbf{B}}
\renewcommand{\c}{\mathbf{c}}
\newcommand{\U}{\mathbf{U}}
\newcommand{\M}{\mathbf{M}}
\newcommand{\Z}{\mathbf{Z}}
\renewcommand{\S}{\mathbf{S}}
\newcommand{\TT}{\mathbf{T}}
\begin{document}

\title[On the depth $r$ Bernstein projector]
{On the depth $r$ Bernstein projector}

\author{Roman Bezrukavnikov}
\address{Department of Mathematics\\
Massachusetts Institute of Technology\\
77 Massachusetts Avenue\\
Cambridge, MA 02139, USA} \email{bezrukav@math.mit.edu}

\author{David Kazhdan}
\address{Institute of Mathematics\\
The Hebrew University of Jerusalem\\
Givat-Ram, Jerusalem,  91904\\
Israel} \email{kazhdan@math.huji.ac.il}

\author{Yakov Varshavsky}
\address{Institute of Mathematics\\
The Hebrew University of Jerusalem\\
Givat-Ram, Jerusalem,  91904\\
Israel} \email{vyakov@math.huji.ac.il}

\thanks{Mathematics Subject Classification (2010): 22E50 (22E35)}



\thanks{The project have received funding from ERC under grant agreement No 669655. This research was partially supported by the BSF grant 2012365}
\begin{abstract}
In this paper we prove an explicit formula for the Bernstein projector to representations of depth $\leq r$.
As a consequence, we show that the depth zero Bernstein projector is supported on topologically unipotent elements
and it is equal to the restriction of the character of the Steinberg representation. As another application,
we deduce that the depth $r$ Bernstein projector is stable. Moreover, for integral depths our proof is purely local.
\end{abstract}
\maketitle

\centerline{\em To Iosif Bernstein with gratitude and best wishes on his birthday}
\tableofcontents

\section*{Introduction}

\noindent Let $G$ be a reductive $p$-adic group. Recall that the {\em Bernstein center} $Z_G$
of $G$ is a commutative ring which plays a role in representation theory of $G$ similar
to the role played by the center of the group ring  in representation theory of a finite
group.

Elements of $Z_G$ can be thought of as invariant distributions on $G$. While the Bernstein center is an important tool
in the structure theory of representations of $G$, known explicit formulas for its elements are rather rare. In this paper we provide
explicit descriptions for some natural elements in $Z_G$.

Recall also that $Z_G$ admits a natural injective homomorphism  into the ring of functions on the set $\Irr(G)$ of irreducible
smooth representations.

Fix a number $r\geq 0$ and consider the function $f_r$ on $\Irr(G)$ such that
$f_r(V)=1$ if the depth of $V$ is $\leq r$,  and $f_r(V)=0$ otherwise.
The main results of this paper describe the element
$E_r\in Z_G$ for which the corresponding function on $\Irr(G)$ equals $f_r$.
We call $E_r$ {\em the depth $r$ projector}.

The  first result (available only for $r=0$) is the equality between $E_0$ and the restriction of
the character of the Steinberg representation to the locus of topologically unipotent elements
of $G$. This can be thought of as a $p$-adic group analogue of the well-known fact that the
character of the Steinberg representation of a finite Chevalley
group restricted to the set of unipotent elements is proportional to the delta function
of the unit element.

Let $\g^*$ be the linear dual of the Lie algebra $\g$ of $G$. Our second result describes
$E_r$ in terms of the Fourier transform of the characteristic function of a certain  subset of $\g^*$.
This formula fits naturally into the standard
analogy between harmonic analysis on the group $G$ and on its Lie algebra $\g$ (notice that
under this analogy elements of $Z_G$ correspond to invariant distributions on
$\g$ whose Fourier transform is locally constant).

As a corollary of our description, we show that $E_r$ is a {\em stable distribution}.
This property of $E_r$ is suggested by the conjectural theory of $L$-packets and its
relation to endoscopy for invariant distributions. The set $\Irr(G)$ is conjectured to
be partitioned into finite subsets called $L$-packets; among many expected properties
of $L$-packets we mention the following: an element $E\in Z_G$ is a stable distribution
if and only if the corresponding function on $\Irr(G)$ is constant on $L$-packets.
It is also expected that the set of irreducible representations of a given depth is a union
of $L$-packets, thus the above conjectures imply that $E_r$ is a stable distribution;
we prove this fact unconditionally.

This result also provides evidence for another conjecture which has the advantage
of being a self-contained formal statement. The so called stable center conjecture
asserts that the subspace of stable distributions in $Z_G$ is a subring. It follows
from our results that  the space of stable distributions in $Z_G$ does contain a subring:
the linear span of the projectors $E_r$ ($r\geq 0$).

\medskip

This work is an outgrowth of a project described in \cite{BKV} whose goal is to construct
elements in $Z_G$ and more general invariant distributions of interest using $l$-adic
sheaves on loop groups. Such a construction for $E_0$ (for split groups in positive
characteristic) was presented in \cite{BKV}.

Though $l$-adic sheaves are not used in the present paper, our main technical result,
\rt{explicit}, was suggested by \cite{BKV}. Namely, the $l$-adic sheaf counterpart of $E_0$
can be constructed by taking derived invariants of the affine Weyl group $W^{\aff}$ acting on the loop group version
of the Springer sheaf. Moreover, using a standard resolution for the trivial representation
of $W^{\aff}$ whose terms are indexed by standard parabolic subgroups therein, we get
an explicit resolution for this sheaf. This leads to the formula for the corresponding
function appearing in \rt{explicit}.

Our method was motivated by a work of Meyer--Solleveld \cite{MS}, who generalized a work of
Schneider--Stuhler \cite{SS}.

\medskip

{\bf Acknowledgements.} We thank Akshay Venkatesh whose question motivated us to rewrite a geometric formula from
\cite{BKV} in elementary terms. We also thank Gopal Prasad for stimulating conversations
and  Ju-Lee Kim and Allen Moy for useful discussions. We thank Dennis Gaitsgory and the referee for their comments
and suggestions.

\section{Statement of results}
\begin{Emp} \label{E:not0}
\noindent{\bf Notation.}
(a) Let $F$ be a local non-archimedean field of residual characteristic $p$, $\ov{F}$ an algebraic closure of $F$, $F^{\nr}\subseteq\ov{F}$  the maximal unramified extension of $F$ inside $\ov{F}$, and $\val_{F}$ the valuation on $\ov{F}$ such that $\val_F(F\m)=\B{Z}$.

(b) Let $\G$ be a connected reductive group over $F$, $G:=\G(F)$, and
 $\CX=\CX(\G)$ the reduced Bruhat--Tits building of $\G$, viewed as a metric space, and equipped with extra structure
(see \re{bt}). To every pair  $(x,r)\in \CX\times\B{R}_{\geq 0}$, Moy--Prasad \cite{MP1,MP2} associate an open-compact subgroup
$G_{x,r^+}\subseteq G$ (see \re{mpsplit} and \re{mp}).
\end{Emp}

\begin{Emp} \label{E:depth}
\noindent{\bf Depth of a representation.}
(a) Let $R(G)$ be the category of smooth complex representations of $G$, and let $\Irr(G)$ be the set of equivalence classes
of irreducible objects of $R(G)$.  To each $V\in R(G)$, Moy--Prasad associate a depth $r\in \B{Q}_{\geq 0}$, which is defined to be
the smallest $r\in\B{R}_{\geq 0}$ such that $V^{G_{x,r^+}}\neq 0$ for some $x\in\CX$. Actually, for our purposes slightly weaker results of DeBacker (\cite{DB}) are sufficient.

(b) For every $r\in \B{Q}_{\geq 0}$, we denote by $\Irr(G)_{\leq r}$ (resp. $\Irr(G)_{>r}$) the set of $V\in\Irr(G)$ of depth $\leq r$ (resp. $>r$), and denote by   $R(G)_{\leq r}$ (resp. $R(G)_{>r}$) the full subcategory of $R(G)$ consisting of
representations $V$ all of whose irreducible subquotients belong to $\Irr(G)_{\leq r}$ (resp. $\Irr(G)_{>r}$).

(c) It follows from a combination of results of Bernstein \cite{Be} and Moy--Prasad
(or DeBacker) that for every $r\in\B{Q}_{\geq 0}$ and $V\in R(G)$ there exists a unique direct sum decomposition $V=V_{\leq r}\oplus V_{>r}$ such that $V_{\leq r}\in R(G)_{\leq r}$ and $V_{>r}\in R(G)_{>r}$. We provide an alternative proof of this fact in \re{idemp}.
\end{Emp}

\begin{Emp} \label{E:cent}
{\bf The Bernstein center.} (a) Let $Z_G$ be the algebra of endomorphisms of the identity functor $\End \Id_{R(G)}$. It is  called the Bernstein center of $G$.
In particular, for every $z\in Z_G$ and $V\in R(G)$, we are given
an endomorphism $z|_V\in\End V$.

(b) Let $\C{H}(G)$ be the algebra of smooth measures with compact support on $G$. Then $\C{H}(G)$ is a smooth representation of $G$
with respect to the left action, and the map $z\mapsto z|_{\C{H}(G)}$ identifies $Z_G$
with the algebra $\End_{\C{H}(G)\otimes\C{H}(G)^{\op}}\C{H}(G)$ of endomorphisms of $\C{H}(G)$, commuting with the left and the right convolution.

(c) For every $V\in R(G)$ and $v\in V$ the map $h\mapsto h(v)$ defines a $G$-equivariant map $\C{H}(G)\to V$. Therefore for every $h\in\C{H}(G)$ and $z\in Z_G$ we have  $z(h(v))=(z(h))(v)$.

(d) For every $z\in Z_G$ there exists a unique invariant distribution $E_z\in D^{G}(G)$ on $G$ such that $z(h)=E_z\ast h$ for every $h\in\C{H}(G)$, where $\ast$ denotes the convolution. Moreover, the map $z\mapsto E_z$ identifies $Z_G$ with the space of all invariant distributions $E\in D^{G}(G)$  such that the distribution $E\ast h$
has a compact support for every $h\in\C{H}(G)$.
\end{Emp}

\begin{Emp} \label{E:proj}
{\bf The Bernstein projector.}
(a) By \re{depth}(c),  there exists an idempotent $\Pi_r\in Z_G$ such that for every object $V\in R(G)$ the endomorphism
$\Pi_r|_V$ is the projection $V\surj V_{\leq r}\hra V$.  We call $\Pi_r$ the {\em depth $r$ Bernstein projector}.

(b) Let $E_r$ be the invariant distribution on $G$, corresponding to $\Pi_r$ (see \re{cent}(d)).
\end{Emp}

A particular case of the stable center conjecture (see \cite{BKV}) asserts that the distribution $E_r$ is stable. The goal of this work is to give an explicit formula for the Bernstein projector $\Pi_r$, and to use this description to show the stability of $E_r$.

From now on we fix $m\in\B{N}$ and $r\in\frac{1}{m}\B{Z}_{\geq 0}$.

\begin{Emp} \label{E:tm}
{\bf Notation.} (a) Denote by $[\CX]$ the set of open polysimplices of $\CX$ (see \re{poly}(a)), and by  $[\CX_m]$ the set of open polysimplices of $\CX$, obtained by ``subdividing each polysimplex $\si\in [\CX]$ into $m^{\dim\si}$ smaller polysimplices" (see \re{ref}(c)).

(b) For every $\si\in [\CX_m]$, we choose $x\in \si$ and define $G_{\si,r^+}:=G_{x,r^+}$. Since $r\in\frac{1}{m}\B{Z}$, the subgroup $G_{\si,r^+}$ does not depend on the choice of  $x$ (see \rl{mpfil}).

(c) To every finite subset $\Si\subseteq [\CX_m]$ we  associate an element
\[
E_r^{\Si}=\sum_{\si\in\Sigma}(-1)^{\dim\si}\dt_{G_{\si,r^+}}\in\C{H}(G),
\]
 where $\dt_{G_{\si,r^+}}$ is the Haar measure on $G_{\si,r^+}$ with total measure one.

 (d) We denote by $\T_m$ the set of non-empty finite convex subcomplexes $\Si\subseteq [\CX_m]$ (see \re{notcomb}),
 and set $\T:=\T_1$. Note that $\T_m$ is an inductive system with respect to inclusions.
\end{Emp}

The following result provides an explicit formula for the projector $\Pi_r$.

\begin{Thm} \label{T:explicit}
For every $V\in R(G)$ and $v\in V$, the inductive system $\{E_r^{\Si}(v)\}_{\Si\in\T_m}$ stabilizes, and $\Pi_r(v)$ equals the limit value of
$E_r^{\Si}(v)$, that is, $\Pi_r(v)=\lim_{\Si\in\T_m}E_r^{\Si}(v)$.
\end{Thm}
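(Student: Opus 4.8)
The plan is to prove the theorem by showing that for fixed $V\in R(G)$ and $v\in V$, the maps $v\mapsto E_r^{\Si}(v)$, $\Si\in\T_m$, converge to the projector $\Pi_r|_V$ in a suitable sense, and that the limit is the desired idempotent. First I would reduce to the case where $V$ is finitely generated, indeed generated by a single vector $v$ that is fixed by some open-compact subgroup $K\subseteq G$; since every vector lies in such a subrepresentation and $\Pi_r$ is functorial, this loses nothing. For such $(V,v)$ there is a natural surjection $\C{H}(G)\to V$, $h\mapsto h(v)$, so by \re{cent}(c) it suffices to prove the statement for $V=\C{H}(G)$ and to show that the inductive system $\{E_r^{\Si}\ast h\}_{\Si\in\T_m}$ stabilizes to $\Pi_r\ast h=E_r\ast h$ for every $h\in\C{H}(G)$. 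Because convolution is continuous and $h$ has compact support, this in turn reduces to a statement about the behavior of the measures $E_r^{\Si}$ themselves on a fixed compact open subset of $G$.

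The main body of the argument is then a local, combinatorial-spectral computation. The key observation is that for a single polysimplex $\si\in[\CX_m]$ the operator $\dt_{G_{\si,r^+}}$ acting on $V$ is the projection onto the subspace $V^{G_{\si,r^+}}$ of vectors fixed by the Moy--Prasad group $G_{\si,r^+}$, and by the theory of Moy--Prasad (or DeBacker) these subspaces, as $\si$ ranges over $[\CX_m]$, ``see'' exactly the depth-$\leq r$ part: $\sum_{\si}V^{G_{\si,r^+}}$ spans $V_{\leq r}$, while on $V_{>r}$ all of these projections are compatible with the building combinatorics in a way that makes the alternating sum vanish in the limit. Concretely, I would fix an irreducible subquotient $W$ of $V$ and analyze $\lim_{\Si}E_r^{\Si}$ acting on $W$ in two cases. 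If $\operatorname{depth}(W)>r$, then $W^{G_{x,r^+}}=0$ for every $x$, so $E_r^{\Si}(w)=0$ for all $\Si$ and $w\in W$, giving $0$ in the limit, as required. If $\operatorname{depth}(W)\leq r$, then one must show $\lim_{\Si}E_r^{\Si}$ acts as the identity on $W$; this is where the convex-subcomplex structure of $\T_m$ and the Euler-characteristic cancellation enter. The idea, following Meyer--Solleveld \cite{MS} and Schneider--Stuhler \cite{SS}, is that for $\Si$ large enough the chain complex built from the $\dt_{G_{\si,r^+}}$'s, with the simplicial boundary maps of the convex subcomplex $\Si$, is an augmented resolution of (the depth-$\leq r$ part of) $W$; contractibility of $|\Si|$ — guaranteed for $\Si\in\T_m$ by convexity — forces the alternating sum $E_r^{\Si}=\sum_{\si}(-1)^{\dim\si}\dt_{G_{\si,r^+}}$ to compute the Euler characteristic of this resolution, which is the identity on $W^{G_{x,r^+}}$-generated part, i.e. on all of $W$.

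To make the stabilization (not merely convergence) precise, I would use that $v$ is fixed by an open-compact $K$: then for $\Si\supseteq\Si_0$ with $\Si_0$ large enough that $K$ and the relevant finitely many Moy--Prasad groups are ``captured,'' the vectors $E_r^{\Si}(v)$ and $E_r^{\Si_0}(v)$ differ by applying $E_r^{\Si\sm\Si_0}$-type terms to $v$, all of which vanish once one is past the depth filtration level of the (finite-length) subrepresentation generated by $v$ — here one invokes that $V_{\leq r}\oplus V_{>r}$ decomposition from \re{depth}(c) and the fact that on the finite-length module $\C{H}(K\backslash G/K')v$ only finitely many $\si$ contribute non-trivially modulo the stabilized value. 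I expect the main obstacle to be precisely this last point: establishing that the inductive system literally \emph{stabilizes} rather than merely converges, which requires a careful finiteness argument showing that, inside the finitely-generated admissible piece containing $v$, enlarging $\Si$ past a certain finite subcomplex changes nothing — this is the step where one genuinely needs the interplay between the Moy--Prasad filtration being constant on each polysimplex of $[\CX_m]$ (\rl{mpfil}, using $r\in\frac1m\B{Z}$), the convexity/contractibility built into $\T_m$, and the Bernstein decomposition. The rest is either formal (reduction to $\C{H}(G)$, functoriality of $\Pi_r$) or a direct adaptation of the Schneider--Stuhler/Meyer--Solleveld resolution machinery.
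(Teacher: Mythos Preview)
Your overall architecture is the same as the paper's: reduce to $V=\C{H}(G)$, prove stabilization of $\{E_r^{\Si}\ast h\}$, identify the limit as an element $z\in Z_G$, and then check $z=\Pi_r$ on irreducibles by depth. The depth-$>r$ case and the passage from $\C{H}(G)$ to general $V$ are fine. But the two substantive steps --- stabilization, and $z|_W=\Id$ for depth $\leq r$ --- are where your proposal is genuinely incomplete, and the fix is not the Schneider--Stuhler/Meyer--Solleveld resolution machinery you gesture at.

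For stabilization, your finiteness heuristic (``only finitely many $\si$ contribute non-trivially modulo the stabilized value'') does not hold as stated: for $v\in V^{G_{x,r^+}}$ there are infinitely many $\si$ with $\dt_{G_{\si,r^+}}(v)\neq 0$, and passing to a finite-length or admissible subrepresentation does not change this. What the paper actually does is purely combinatorial on the building and has nothing to do with the representation. Fixing $x$ and writing $h=\dt_{G_{x,(r+s)^+}}\ast h$, one proves a convolution identity (\rl{equal}): whenever $\si'\preceq\si$ and $\si$ lies in a certain convex subcomplex $\Gm_s(\si',x)$, one has $G_{\si,r^+}\cdot G_{x,(r+s)^+}=G_{\si',r^+}\cdot G_{x,(r+s)^+}$, hence $\dt_{G_{\si,r^+}}\ast\dt_{G_{x,(r+s)^+}}=\dt_{G_{\si',r^+}}\ast\dt_{G_{x,(r+s)^+}}$. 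One then constructs an idempotent map $m_{x,s}:[\CX_m]\to[\CX_m]$ sending each $\si$ to its minimal face with this property; its fibers over $\Si\sm\Si'$ are nondegenerate polysimplicial intervals $[\si',\si'']$ as soon as $\Si'$ contains a certain finite set $\Upsilon_{x,s}$ of chambers, and the alternating sum $\sum(-1)^{\dim\tau}$ over each interval vanishes. This is the cancellation mechanism; contractibility of $|\Si|$ alone does not produce it, because you need the individual convolutions (not just the sum) to coincide along each fiber.

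For $z|_W=\Id$ in depth $\leq r$, the paper does not invoke resolutions at all. It observes that the case $s=0$ of the same stabilization argument, with $\Si'=\{x\}$, already gives $E_r^{\Si}\ast\dt_{G_{x,r^+}}=\dt_{G_{x,r^+}}$ for every $\Si\ni x$; applying this to $v\in W^{G_{x,r^+}}$ yields $z(v)=v$ directly. Your resolution sketch would presumably also work, but it is considerably heavier and, as written, does not separate the stabilization from the identification of the limit.
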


\begin{Emp} \label{E:strategy}
{\bf Strategy of the proof.} Analyzing combinatorics of the Bruhat--Tits building, we show that for every $x\in \CX$ and $s\in\B{R}_{\geq 0}$ the inductive system  $\{E_r^{\Si}\ast \dt_{G_{x,s^+}}\}_{\Si\in\T_m}$ stabilizes. This implies that the inductive system  $\{E_r^{\Si}\ast h\}_{\Si\in\T_m}$ stabilizes for all  $h\in\C{H}(G)$, and that there exists a unique element of the Bernstein center $z\in Z_G$ such that $z(h)=\lim_{\Si\in\T_m}E_r^{\Si}\ast h$.

Next, using \re{cent}(c), we show that for every $V\in R(G)$ and $v\in V$, the inductive system  $\{E_r^{\Si}(v)\}_{\Si\in\T_m}$ stabilizes, and
$z(v)=\lim_{\Si\in\T_m}E_r^{\Si}(v)$. In particular, $z|_{V}=0$ for every $V\in \Irr(G)_{>r}$.

It remains to show that $z=\Pi_r$. By a theorem of Bernstein, we have to check that  $z|_{V}=\Id$ for every $V\in \Irr(G)_{\leq r}$.
Using \re{cent}(c) again, it remains to show that $z(\dt_{G_{x,r^+}})=\dt_{G_{x,r^+}}$ for every $x\in \CX$.
To prove this, we show a stronger assertion that $E_r^{\Si}\ast\dt_{G_{x,r^+}}=\dt_{G_{x,r^+}}$ for all $\Si\in\T_m$ such that $x\in\Si$.
\end{Emp}

\begin{Emp}
{\bf Remark.} Our argument also provides an alternative proof of the decomposition $V=V_{\leq r}\oplus V_{>r}$ from \re{depth}(b),
hence an alternative proof of the existence of the projector $\Pi_r$ (see \re{idemp}).
\end{Emp}

Consider the open and closed subset $G_{r^+}:=\cup_{x\in\CX}G_{x,r^+}\subseteq G$ (see \rl{clos} or \cite[Cor 3.7.21]{ADB}). Notice that $G_{0^+}$ is usually called the set of topologically unipotent elements.  \rt{explicit} has the following consequence.

\begin{Cor} \label{C:explicit}
(a) We have the equality $E_r=\lim_{\Si\in\T_m}E^{\Si}_r$. In other words, for every $f\in C_c^{\infty}(G)$ the inductive system  $\{E_r^{\Si}(f)\}_{\Si\in\T_m}$ stabilizes, and $E_r(f)=\lim_{\Si\in\T_m}E^{\Si}_r(f)$.

(b) The invariant distribution $E_r$ is supported on $G_{r^+}$.
\end{Cor}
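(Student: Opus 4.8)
The plan is to deduce both parts directly from \rt{explicit} and the characterization of $E_r$ given in \re{cent}(d); part (b) will then be essentially formal once (a) is known. Recall from \re{cent}(d) and \re{proj}(b) that $E_r$ is the unique invariant distribution on $G$ with $\Pi_r(h)=E_r\ast h$ for every $h\in\C{H}(G)$, where $\C{H}(G)$ carries the smooth $G$-action by left translations. The one elementary input I need for (a) is the standard ``recovery'' observation: for any invariant distribution (or compactly supported smooth measure) $E$ on $G$, any $f\in C_c^\infty(G)$, and any open compact $K\subseteq G$ small enough that $f$ is right $K$-invariant, one has $E(f)=(E\ast e_K)(f)$, where $e_K\in\C{H}(G)$ is the Haar measure on $K$ of total mass one. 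This is immediate from the definition of convolution: pairing $E\ast e_K$ with $f$ pairs $E$ with the right $K$-average of $f$, which is just $f$.

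To prove (a) I would fix $f\in C_c^\infty(G)$, pick such a $K$, and apply \rt{explicit} to the smooth representation $V=\C{H}(G)$ and the vector $v=e_K$. This gives that the inductive system $\{E_r^\Si\ast e_K\}_{\Si\in\T_m}$ stabilizes in $\C{H}(G)$ with limit $\Pi_r(e_K)=E_r\ast e_K$ (here $E_r^\Si(e_K)=E_r^\Si\ast e_K$ since the action is by left translation). Pairing the whole stabilizing system with the fixed function $f$ and applying the recovery observation to each $E_r^\Si$ and to $E_r$, I conclude that $\{E_r^\Si(f)\}_{\Si\in\T_m}=\{(E_r^\Si\ast e_K)(f)\}_{\Si\in\T_m}$ stabilizes with limit $(E_r\ast e_K)(f)=E_r(f)$. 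Since $f$ was arbitrary, this is exactly the asserted equality $E_r=\lim_{\Si\in\T_m}E_r^\Si$.

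Part (b) then follows with almost no further work. The set $G_{r^+}$ is open and closed (\rl{clos}), so it suffices to check $E_r(f)=0$ whenever $f\in C_c^\infty(G)$ is supported in the clopen complement $G\sm G_{r^+}$. For such $f$ and any $\si\in[\CX_m]$, choosing $x\in\si$ we have $G_{\si,r^+}=G_{x,r^+}\subseteq G_{r^+}$, so the probability measure $\dt_{G_{\si,r^+}}$ annihilates $f$; hence $E_r^\Si(f)=\sum_{\si\in\Si}(-1)^{\dim\si}\dt_{G_{\si,r^+}}(f)=0$ for every $\Si\in\T_m$, and by (a) we get $E_r(f)=\lim_{\Si\in\T_m}E_r^\Si(f)=0$.

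I do not expect a serious obstacle: the only real content is \rt{explicit}, and the rest is bookkeeping around the Bernstein center. The single point requiring care is the recovery step together with its left/right conventions — one must line up the side on which $G$ acts on $\C{H}(G)$ in the application of \rt{explicit} with the side on which $e_K$ is averaged, so that pairing $E_r^\Si\ast e_K$ and $E_r\ast e_K$ with $f$ returns precisely $E_r^\Si(f)$ and $E_r(f)$ with no residual twist (alternatively one can symmetrize and use $e_K\ast E\ast e_{K'}$ for $f$ bi-$K$-invariant).
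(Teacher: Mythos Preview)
Your argument is correct and follows essentially the same route as the paper: both deduce (a) from \rt{explicit} applied to $V=\C{H}(G)$, then use a recovery identity to pass from the convolution statement $E_r\ast h=\lim_{\Si}E_r^{\Si}\ast h$ back to $E_r(f)=\lim_{\Si}E_r^{\Si}(f)$ (the paper uses $E(f)=(E\ast\iota^*f)(1)$ where $\iota(g)=g^{-1}$, whereas you use $E(f)=(E\ast e_K)(f)$ for right $K$-invariant $f$; these are equivalent bookkeeping devices). Part (b) is argued identically in both.
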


As a further consequence, we get the following variant of the character formula of Meyer--Solleveld \cite{MS}.

\begin{Cor} \label{C:formchar}
For every  admissible $V\in R(G)_{\leq r}$ and every $h\in\C{H}(G)$ we have
\[
\Tr(h,V)=\lim_{\Si\in\T_m} \left[\sum_{\si\in\Si}(-1)^{\dim\si} \Tr(\dt_{G_{\si,r^+}}\ast h\ast\dt_{G_{\si,r^+}},V^{G_{\si,r^+}})\right].
\]
\end{Cor}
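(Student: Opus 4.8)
\medskip
\noindent\textbf{Proof proposal.}
The plan is to derive \rco{formchar} from \rt{explicit} by unwinding the definition of $E_r^{\Si}$ and applying the elementary identity relating convolution by $\dt_K$ to averaging over $K$.

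Fix an admissible $V\in R(G)_{\leq r}$ and $h\in\C{H}(G)$. Since $V$ lies in $R(G)_{\leq r}$ we have $\Pi_r|_V=\Id_V$, so \rt{explicit} says that for each $v\in V$ the inductive system $\{E_r^{\Si}(v)\}_{\Si\in\T_m}$ is eventually constant with value $\Pi_r(v)=v$. By left-smoothness of $h$ there is an open compact subgroup $K'\subseteq G$ with $\dt_{K'}\ast h=h$, so $\pi(h)=\pi(\dt_{K'})\circ\pi(h)$ has image in $V^{K'}$, which is finite-dimensional because $V$ is admissible; thus $\pi(h)$ has finite rank and $\Tr(h,V)$ is defined. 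Choosing a basis $v_1,\dots,v_n$ of $V^{K'}$ and using that $\T_m$ is directed, we obtain $\Si_0\in\T_m$ with $E_r^{\Si}v_i=v_i$ for all $i$ whenever $\Si\supseteq\Si_0$; since $\im\pi(h)\subseteq V^{K'}$ this gives
\[
\pi(E_r^{\Si}\ast h)=\pi(E_r^{\Si})\circ\pi(h)=\pi(h)\qquad\text{for all }\Si\supseteq\Si_0,
\]
hence $\Tr(E_r^{\Si}\ast h,V)=\Tr(h,V)$ eventually. So the inductive system on the right-hand side of the corollary stabilizes, and it remains to prove, for each $\Si\in\T_m$, that $\Tr(E_r^{\Si}\ast h,V)=\sum_{\si\in\Si}(-1)^{\dim\si}\Tr(\dt_{G_{\si,r^+}}\ast h\ast\dt_{G_{\si,r^+}},V^{G_{\si,r^+}})$.

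By $E_r^{\Si}=\sum_{\si\in\Si}(-1)^{\dim\si}\dt_{G_{\si,r^+}}$ and linearity of the trace this reduces to showing, for an arbitrary open compact subgroup $K\subseteq G$, that $\Tr(\dt_K\ast h,V)=\Tr(\dt_K\ast h\ast\dt_K,V^{K})$. Put $e_K:=\pi(\dt_K)$, the idempotent projection $V\surj V^{K}\hra V$, and $A:=\pi(h)$, of finite rank. By cyclicity of the trace and $e_K^{2}=e_K$ we have $\Tr(e_KA,V)=\Tr(Ae_K,V)=\Tr(e_KAe_K,V)$; and $e_KAe_K$ has image contained in $V^{K}$ and vanishes on $\ker e_K$, so with respect to $V=V^{K}\oplus\ker e_K$ its trace on $V$ equals the trace of its restriction to $V^{K}$, which is exactly $\Tr(\pi(\dt_K\ast h\ast\dt_K),V^{K})$. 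Combining with the previous paragraph and letting $\Si$ grow in $\T_m$ proves the corollary.

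I do not expect a real obstacle here: the statement is a formal corollary of \rt{explicit} together with standard linear algebra. The only points needing a little care are the finiteness assertions — that $\pi(h)$ and $e_KAe_K$ have finite rank, so that all the traces in sight are genuinely defined, which is exactly where admissibility of $V$ enters — and the upgrade from the vector-wise stabilization in \rt{explicit} to the stabilization of the operators $\pi(E_r^{\Si}\ast h)$, which works because $\pi(h)$ has finite-dimensional image.
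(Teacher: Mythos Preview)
Your argument is correct and follows the same route as the paper: reduce to \rt{explicit}, use that $\Pi_r|_V=\Id_V$ to get $\Tr(h,V)=\lim_{\Si}\Tr(E_r^{\Si}\ast h,V)$, and then insert the second idempotent $\dt_{G_{\si,r^+}}$ via trace cyclicity. The only difference is packaging: the paper works with the character $\chi_V$ viewed as an invariant generalized function of depth $\leq r$ and proves the identity $\chi(h)=\lim_{\Si}\sum_{\si}(-1)^{\dim\si}\chi(\dt_{G_{\si,r^+}}\ast h\ast\dt_{G_{\si,r^+}})$ for \emph{any} such $\chi$ (this is \rco{genfun}), of which \rco{formchar} is the special case $\chi=\chi_V$. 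Your operator-level argument is slightly more elementary (no need to set up the $Z_G$-action on $\wh{C}(G)$), while the paper's formulation immediately yields the more general statement for arbitrary depth $\leq r$ invariant generalized functions.
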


\begin{Emp} \label{E:Av}
{\bf Averaging.} We fix $n\in\B{N}$ and an Iwahori subgroup $\I$ of $G$.

(a) Denote by $\Par$ the set of standard parahoric subgroups $\P\supseteq\I$.
Each $\P\in\Par$ corresponds to a polysimplex $\si_{\P}\in[\CX]$. We $\P^+_n:=G_{\si_{\P},n^+}$ and $\P^+:=\P^+_0$.

(b) For a finite subset $Y\subseteq G/\P$ and an $\Ad P$-invariant distribution $E\in D^{\P}(G)$ we denote by
$\Av_Y(E)\in D(G)$ the distribution $\sum_{g\in Y}(\Ad g)_*(E)$.

(c) For every subset $\Si\subseteq[\CX]$ and $\P\in\Par$ we denote by $Y_{\P}^{\Si}\subseteq G/\P$ the set of all $g\in G/\P$ such that
$g(\si_{\P})\in\Si$.

(d) Since $D(G)$ is the linear dual of $C_c^{\infty}(G)$, the center $Z_G$ acts on $D(G)$ by the formula $z(E)(f)=E(z(f))$ for all $z\in Z_G$, $E\in D(G)$ and $f\in C_c^{\infty}(G)$.
\end{Emp}

Note that for every $n\in\B{N}$ and $E\in D^G(G)$ the distribution $E\ast\dt_{\P^+_n}\in D(G)$ is $\Ad\P$-invariant, thus we can form
$\Av_Y(E\ast\dt_{\P^+_n})\in D(G)$ (see \re{Av}(b)).

\rt{explicit} has the following consequence.

\begin{Cor} \label{C:Av}
For every $E\in D^G(G)$ and $n\in\B{N}$, we have the equality
\begin{equation} \label{Eq:aver}
\Pi_n(E)=\lim_{\Si\in\T}\left[\sum_{\P\in\Par}(-1)^{\dim \si_{\P}}\Av_{Y_{\P}^{\Si}}(E\ast\dt_{\P^+_n})\right].
\end{equation}
In particular, the support of $\Pi_n(E)$ is contained in
$\cup_{\P\in\Par}\Ad G(\Supp (E\ast\dt_{\P^+_n}))$.
\end{Cor}

\begin{Emp}
{\bf Notation.} Let $\mu^{\I^+}$ be the Haar measure on $G$ normalized by  $\int_{\I^+}\mu^{\I^+}=1$.
\end{Emp}

Since the invariant distribution $E_0$ is supported on $G_{0^+}$ (by \rco{explicit}(b)), the following result describes  $E_0$ in terms of the character  $\chi_{\St_G}$ of the Steinberg representation $\St_G$ of $G$.

\begin{Thm} \label{T:steinberg}
We have the equality $E_0|_{G_{0^+}}=(\chi_{\St_G}|_{G_{0^+}})\cdot\mu^{\I^+}$, that is, $E_0(f)$ equals $\chi_{\St_G}(f\mu^{\I^+})$
for every $f\in C_c^{\infty}(G_{0^+})$.
\end{Thm}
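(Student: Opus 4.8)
The plan is to reduce the identity $E_0|_{G_{0^+}}=(\chi_{\St_G}|_{G_{0^+}})\cdot\mu^{\I^+}$ to the explicit formula of \rt{explicit} for $\Pi_0=E_0$, combined with a resolution of the Steinberg representation by the representations $\mathrm{ind}_{\P^+}^G\mathbf{1}$ indexed by the standard parahoric subgroups $\P\in\Par$. Concretely, recall the Steinberg character can be written via the alternating sum $\chi_{\St_G}=\sum_{\P\in\Par}(-1)^{\dim\si_\P}\chi_{\mathrm{ind}_{\P}^G\mathbf{1}}$ over standard parahorics (the virtual-representation identity coming from the standard Tits-complex resolution of the Steinberg module for $W^{\aff}$). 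First I would establish that for $f\in C_c^\infty(G_{0^+})$, the value $\chi_{\mathrm{ind}_{\P}^G\mathbf{1}}(f\mu^{\I^+})$ can be rewritten as an orbital-type integral over $G/\P$ of the averages of the normalized idempotent $\dt_{\P^+}$ paired with $f$; this is a routine Frobenius-reciprocity/Mackey computation identifying $\mathrm{Tr}(f\mu^{\I^+},\mathrm{ind}_\P^G\mathbf{1})$ with $\int_{G/\P}(\text{something involving }\dt_{\P^+})$. Here the normalization $\mu^{\I^+}$ is exactly what is needed so that $\dt_{\I^+}=\mathbf{1}_{\I^+}\mu^{\I^+}$, which is why this particular Haar measure appears in the statement.

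Next I would invoke \rco{Av}(a) with $z=1$ (so $E_z=\dt_e$ and $E_z\ast E_0=E_0$) and $n=0$: this gives
\[
E_0=\lim_{\Si\in\T}\Bigl[\sum_{\P\in\Par}(-1)^{\dim\si_\P}\Av_{Y_\P^{\Si}}(\dt_{\P^+})\Bigr].
\]
Evaluating both sides on $f\in C_c^\infty(G_{0^+})$, the right-hand side becomes $\sum_{\P\in\Par}(-1)^{\dim\si_\P}\sum_{g\in Y_\P^\Si}((\Ad g)_*\dt_{\P^+})(f)$, and I would match this term-by-term, in the limit over $\Si\in\T$, with $\sum_{\P\in\Par}(-1)^{\dim\si_\P}\chi_{\mathrm{ind}_\P^G\mathbf{1}}(f\mu^{\I^+})=\chi_{\St_G}(f\mu^{\I^+})$. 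The point is that $\chi_{\mathrm{ind}_\P^G\mathbf{1}}(f\mu^{\I^+})$, being a sum over $G/\P$ of translates of $\dt_{\P^+}$ tested against $f$, is the "full" ($\Si=[\CX]$) version of the truncated sum $\Av_{Y_\P^\Si}(\dt_{\P^+})(f)$; one needs that the extra terms $g\in G/\P$ with $g(\si_\P)\notin\Si$ contribute $0$ in the limit, which follows because $f$ has compact support in $G_{0^+}$ and $(\Ad g)_*\dt_{\P^+}$ is supported on $gG_{\si_{g(\P)},0^+}g^{-1}$ — only finitely many conjugates meet $\Supp f$ (using \rco{explicit}(b) / \rl{clos} and properness of the building action on the relevant compact set), so the truncated and full sums agree for $\Si$ large enough. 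Conversely convergence of $\chi_{\mathrm{ind}_\P^G\mathbf{1}}(f\mu^{\I^+})$ as an honest number (not just formal sum) uses the same finiteness.

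The main obstacle I anticipate is the bookkeeping in the Frobenius-reciprocity step: making precise the identification of $\mathrm{Tr}(f\mu^{\I^+},\mathrm{ind}_\P^G\mathbf{1})$ with $\sum_{g\in G/\P}((\Ad g)_*\dt_{\P^+})(f)$ with exactly the right normalization constants, and verifying that the alternating sum of these traces over $\Par$ genuinely computes $\chi_{\St_G}(f\mu^{\I^+})$ rather than some rescaled version — in particular one must check the resolution $0\to\St_G\to\cdots\to\mathrm{ind}_{\I}^G\mathbf{1}\to\cdots$ is exact and each $\mathrm{ind}_\P^G\mathbf{1}$ is a finitely generated admissible representation so that its character is a well-defined distribution that can be tested against $f\mu^{\I^+}$. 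A secondary subtlety is ensuring that restricting to $G_{0^+}$ is compatible with all the limits: since both $E_0$ (by \rco{explicit}(b)) and each term on the geometric side are controlled on $G_{0^+}$, and since $\St_G$ lies in $R(G)_{\leq 0}$ so that $\chi_{\St_G}$ is in the image described by \re{cent}, one gets that the equality of distributions, a priori only on $C_c^\infty(G_{0^+})$, is exactly the asserted statement.
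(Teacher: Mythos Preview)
Your approach has a genuine gap: the claimed identity
\[
\chi_{\St_G}=\sum_{\P\in\Par}(-1)^{\dim\si_\P}\chi_{\ind_{\P}^G\mathbf{1}}
\]
over standard \emph{parahoric} subgroups is not correct. The simplicial chain complex of the building (which is contractible) gives an exact complex whose terms are the $\ind_{G_\si}^G\mathbf{1}$, and its Euler characteristic is the \emph{trivial} representation, not Steinberg. Concretely, for $\SL{2}$ the tree gives $0\to C_c(G/I)\to C_c(G/K_0)\oplus C_c(G/K_1)\to\B{C}\to 0$, so the alternating sum is $[\B{C}]$. You may be conflating this with the parabolic-induction formula $\St_G=\sum_Q(-1)^{?}\Ind_Q^G\mathbf{1}$ of \re{stpad}(b), which involves genuinely different (non-compact) subgroups and admissible representations. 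There is no ``Tits-complex resolution of the Steinberg module for $W^{\aff}$'' in the sense you invoke: the Coxeter complex of an affine Weyl group is contractible, so its top homology vanishes.

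A second, related problem is that $\ind_\P^G\mathbf{1}=C_c(G/\P)$ is \emph{not} admissible (its $I$-invariants $C_c(I\backslash G/\P)$ are infinite-dimensional), so its character does not exist as a locally integrable function or as a trace distribution; you flag this as a thing to check, but it in fact fails. Even granting a formal ``character'' $\sum_{g\in G/\P}1_{g\P g^{-1}}$, this does not match the summands $(\Ad g)_*\dt_{\P^+}$ appearing in \rco{Av}(a): the latter involve $\P^+$ and carry the extra factor $|U_{L_\P}|$, so your Frobenius-reciprocity identification is off in both the subgroup and the normalization.

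The paper's proof proceeds quite differently. It reduces to $f\in C_c^\infty(I^+)$ by $\Ad G$-invariance, then applies the Meyer--Solleveld character formula \form{ss} directly to $\pi=\St_G$. The key representation-theoretic input is \rp{restr}: the $\P^+$-invariants $\St_G^{\P^+}$ form exactly the Steinberg representation $\St_{L_\P}$ of the finite reductive quotient $L_\P=\P/\P^+$. Combined with the finite-group fact that $\Tr(u,\St_L)=|U_L|\cdot 1_{u=1}$ for unipotent $u$, this turns the Meyer--Solleveld sum into $\sum_{\si}(-1)^{\dim\si}|U_\si|1_{G_\si^+}\mu^{I^+}=E_0^{\Si}$, and one finishes by \rco{explicit}(a). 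The factor $|U_\si|$ that you were hoping would come from a trivially-induced character in fact arises from $\dim\St_{L_\si}=|U_\si|$; getting it requires \rp{restr}, which in turn needs the nontrivial \rl{inv}.
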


To prove this result, we compare the explicit formula for $E_0$ given in \rco{explicit} with a corresponding formula of Meyer--Solleveld \cite{MS}
for $\chi_{\St_G}$.

\begin{Emp} \label{E:comp}
{\bf Remark.} Though our formula from \rco{formchar} applies in a more general situation than a similar formula of
Meyer--Solleveld \cite{MS}, it is less precise. In particular,  \rco{formchar} does not suffice for the proof of
\rt{steinberg}.
\end{Emp}

Since the character of the Steinberg representation is known to be stable (see \re{stsubs}(b)), we deduce from \rt{steinberg} the following  corollary.

\begin{Cor} \label{C:stable}
The invariant distribution $E_0$ is stable.
\end{Cor}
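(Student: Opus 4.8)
The plan is to deduce stability of $E_0$ directly from \rt{steinberg}, feeding in two facts: the stability of the Steinberg character $\chi_{\St_G}$, which is classical (\re{stsubs}(b)), and a general principle that stability of an invariant distribution is inherited by its restriction to an open, closed, conjugation-invariant subset which is moreover a union of stable conjugacy classes. The genuine content lies entirely in \rt{steinberg}; what remains is purely formal bookkeeping.

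First I would record the relevant formalism. Recall that $D\in D^G(G)$ is \emph{stable} if $D(f)=0$ for every $f\in C_c^\infty(G)$ all of whose stable orbital integrals vanish. Now $G_{0^+}\subseteq G$ is open and closed (by \rl{clos}) and conjugation-invariant, so multiplication by its characteristic function $\mathbf{1}_{G_{0^+}}$ maps $C_c^\infty(G)$ to itself and commutes with conjugation. Moreover $G_{0^+}$ is a union of stable conjugacy classes: topological unipotence of $g$ (say, the condition $g^{p^n}\to 1$, or equivalently the condition on the topological Jordan decomposition over $F^{\nr}$) is insensitive to passing to the algebraic closure, so if $g\in G_{0^+}$ and $g'$ is stably conjugate to $g$ then $g'\in G_{0^+}$. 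Consequently, for $f\in C_c^\infty(G)$ the function $\mathbf{1}_{G_{0^+}}f$ has, at a regular semisimple $\gamma\in G_{0^+}$, the same stable orbital integral as $f$ (all the relevant orbits lie inside $G_{0^+}$), and vanishing stable orbital integral at $\gamma\notin G_{0^+}$ (the whole stable class of $\gamma$ avoids $G_{0^+}$). Hence, if $D$ is stable then $D|_{G_{0^+}}\colon f\mapsto D(\mathbf{1}_{G_{0^+}}f)$ is stable as well.

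Next I would apply this to $D=\chi_{\St_G}\cdot\mu^{\I^+}$. Since $\chi_{\St_G}$ is a stable distribution (\re{stsubs}(b)) and $\mu^{\I^+}$ is a Haar measure, $\chi_{\St_G}\cdot\mu^{\I^+}\in D^G(G)$ is stable, hence so is its restriction $(\chi_{\St_G}|_{G_{0^+}})\cdot\mu^{\I^+}$ to the open invariant subset $G_{0^+}$. By \rco{explicit}(b) the distribution $E_0$ is supported on $G_{0^+}$, and by \rt{steinberg} it agrees there with $(\chi_{\St_G}|_{G_{0^+}})\cdot\mu^{\I^+}$; therefore $E_0=(\chi_{\St_G}|_{G_{0^+}})\cdot\mu^{\I^+}$ as distributions on $G$, and we conclude that $E_0$ is stable.

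The only step demanding any care — and hence the main obstacle — is the claim that $G_{0^+}$ is closed under stable conjugacy, which is what licenses the restriction principle; I would spell this out via the topological Jordan decomposition over $F^{\nr}$ before invoking stability of $\chi_{\St_G}$. Everything else is a formal consequence of \rt{steinberg}.
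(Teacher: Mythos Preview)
Your argument is correct and is essentially the paper's own proof: both use that $E_0$ is supported on $G_{0^+}$ (\rco{explicit}(b)), that $G_{0^+}$ is open, closed and stable, and that $\chi_{\St_G}$ is stable (\re{stsubs}(b)), combined with \rt{steinberg}. The only difference is that you propose to justify the stability of $G_{0^+}$ via the characterization of topological unipotence as $g^{p^n}\to 1$, whereas the paper simply invokes \rl{clos} (proved in Appendix~\ref{S:cong} via a Bruhat--Tits fixed point argument over $F^{\nr}$); since \rl{clos} is already available, you may as well cite it directly and avoid any subtleties about when that characterization of $G_{0^+}$ holds.
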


\begin{Emp} \label{E:mplie}
{\bf The Moy--Prasad filtration for the Lie algebras.}
(a) Let $\g$ be the Lie algebra of $\G$, and let $\g^*$ be the dual vector space.
For every $(x,r)\in\CX\times\B{R}_{\geq 0}$, Moy--Prasad define $\C{O}$-lattices $\g_{x,r^+}\subseteq \g$ and
$\g^*_{x,-r}\subseteq \g^*$ (see \re{mpsplit}(c) and \re{mp}(c)).

(b) As in the group case, for every $\si\in [\CX_m]$  we define $\g_{\si,r^+}:= \g_{x,r^+}$ for $x\in\si$ (use \rl{mpfil}). Also
to every $\Si\in\T_m$ we associate an element
\[
\C{E}_r^{\Si}:=\sum_{\si\in\Sigma}(-1)^{\dim\si}\dt_{\g_{\si,r^+}}\in\C{H}(\g).
\]
Here $\C{H}(\g)$ denotes the space of smooth measures with compact support on $\g$, and $\dt_{\g_{\si,r^+}}$ is the Haar measure on $\g_{\si,r^+}$ with total measure one.

(c) Consider the open-closed subsets
$\g_{r^+}:=\cup_{x\in\CX}\g_{x,r^+}\subseteq \g$ and $\g^*_{-r}:=\cup_{x\in\CX}\g^*_{x,-r}\subseteq \g^*$
(see \rl{clos} or \cite[Cor 3.4.3]{ADB}), and denote by $1_{\g^*_{-r}}$ the characteristic function of $\g^*_{-r}$.
\end{Emp}

\begin{Emp}
{\bf The Fourier transform.}
(a) Let $\C{O}\subseteq F$ be the ring of integers, let $\varpi\in \C{O}$ be a uniformizer, and
let $\psi:F\to \B{C}\m$ be an additive character, trivial on $(\varpi)$ but nontrivial on $\C{O}$.
Then $\psi$ gives rise to a Fourier transform $\C{F}:\C{H}(\g^*)\to C_c^{\infty}(\g)$,
 where $\C{H}(\g^*)$ denotes the space of smooth measures with compact support on $\g^*$. Explicitly,
$\C{F}(h)(a)=\int_{\g^*}\psi(\lan\cdot,a\ran)h$ for every $h\in\C{H}(\g^*)$ and $a\in\g$.

(b) By duality, $\C{F}$ gives rise to an isomorphism $\C{F}:D^G(\g)\isom\wh{C}^G(\g^*)$ between
the space of  invariant distributions on $\g$ and the space of invariant generalized functions on $\g^*$.
Explicitly, $\C{F}(E)(h)=E(\C{F}(h))$ for every $E\in D^G(\g)$ and $h\in\C{H}(\g^*)$.
\end{Emp}

\begin{Emp}
{\bf The Lie algebra analogue of the center.}
(a) We denote by $Z_{\g}\subseteq D^G(\g)$ the subspace of all $E$ such that the distribution $E\ast h$ has compact support
for every $h\in\C{H}(\g)$. Equivalently, $E\in D^G(\g)$ belongs to $Z_{\g}$ if and only if the Fourier transform
$\C{F}(E)\in\wh{C}^G(\g)$ is locally constant.

(b) We set $\C{E}_r:=\C{F}^{-1}(1_{\g^*_{-r}})\in  D^G(\g)$, and call $\C{E}_r$ {\em the Lie algebra analogue of the depth
$r$ projector}. Since $\g^*_{-r}\subseteq \g^*$ is open and closed, the function  $1_{\g^*_{-r}}$ is locally constant,
thus $\C{E}_r\in Z_{\g}$.
\end{Emp}

The following result is the Lie algebra analogue of \rco{explicit}.

\begin{Prop} \label{P:lie}
For every $f\in C_c^{\infty}(\g)$ the inductive system  $\{\C{E}_r^{\Si}(f)\}_{\Si\in\T_m}$ stabilizes, and
$\C{E}_r(f)=\lim_{\Si\in\T_m}\C{E}^{\Si}_r(f)$. In particular, $\C{E}_r$ is supported on $\g_{r^+}$.
\end{Prop}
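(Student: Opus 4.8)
The plan is to reduce the Lie algebra statement to the combinatorial input that already drives the proof of \rt{explicit}, namely the stabilization of inductive systems of the form $\{E_r^{\Si}\ast\dt_{G_{x,s^+}}\}_{\Si\in\T_m}$, transported to the additive setting where the analysis is in fact simpler because $\g$ is a vector group and all the relevant lattices are honest subgroups. Concretely, I would first observe that the Moy--Prasad lattices $\g_{x,r^+}$ depend only on the polysimplex $\si\in[\CX_m]$ containing $x$ (by \rl{mpfil}, exactly as in \re{tm}(b)), so $\C{E}_r^{\Si}=\sum_{\si\in\Si}(-1)^{\dim\si}\dt_{\g_{\si,r^+}}$ is well defined, and then show that for every compact open lattice $L\subseteq\g$ the inductive system $\{\C{E}_r^{\Si}\ast\dt_L\}_{\Si\in\T_m}$ stabilizes. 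This is the Lie algebra counterpart of the first step in \re{strategy}, and since $C_c^{\infty}(\g)$ is spanned by translates of such $\dt_L$, it yields at once that $\{\C{E}_r^{\Si}(f)\}_{\Si\in\T_m}$ stabilizes for all $f\in C_c^{\infty}(\g)$ and that $f\mapsto\lim_{\Si}\C{E}_r^{\Si}(f)$ defines an element $\C{E}\in D^G(\g)$ with $\C{E}\ast h$ of compact support for every $h$, i.e. $\C{E}\in Z_{\g}$.

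Next I would identify $\C{E}$ with $\C{E}_r=\C{F}^{-1}(1_{\g^*_{-r}})$. Because $\C{E}\in Z_{\g}$, its Fourier transform $\C{F}(\C{E})$ is a locally constant invariant function on $\g^*$, so it suffices to compute it pointwise. Fix $a\in\g^*$; choosing $x\in\CX$ with $a\in\g^*_{x,-r}$ when $a\in\g^*_{-r}$, one has by the defining duality of the Moy--Prasad filtrations that $\psi(\lan\cdot,a\ran)$ is trivial on $\g_{x,r^+}$, hence $\C{F}(\dt_{\g_{\si,r^+}})(a)=1$ for every $\si$ with $\si\subseteq$ a chamber containing such an $x$, and more generally the alternating sum $\C{F}(\C{E}_r^{\Si})(a)=\sum_{\si\in\Si}(-1)^{\dim\si}\C{F}(\dt_{\g_{\si,r^+}})(a)$ telescopes over the convex subcomplex $\Si$. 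Running the same Euler-characteristic computation on the building that appears in the group case (the combinatorial lemma that the alternating sum over a convex subcomplex of the contractible building tends to $1$), I get $\C{F}(\C{E})(a)=1$ when $a\in\g^*_{-r}$ and $=0$ otherwise, so $\C{F}(\C{E})=1_{\g^*_{-r}}$ and therefore $\C{E}=\C{E}_r$. The final assertion that $\C{E}_r$ is supported on $\g_{r^+}$ is then immediate: each $\dt_{\g_{\si,r^+}}$ is supported in $\g_{\si,r^+}\subseteq\g_{r^+}$, hence so is every $\C{E}_r^{\Si}$, hence so is the limit; here one uses that $\g_{r^+}=\cup_x\g_{x,r^+}$ is open and closed in $\g$ (as recalled in \re{mplie}(c)).

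The main obstacle is the first step, the stabilization of $\{\C{E}_r^{\Si}\ast\dt_L\}_{\Si\in\T_m}$. Two points need care. First, one must control, for a fixed lattice $L$ and a fixed point $y$ of its support, the set of polysimplices $\si$ for which $\dt_{\g_{\si,r^+}}\ast\dt_L$ varies as $\Si$ grows; this is governed by the geometry of the building near the (finitely many) relevant points, and one wants a uniform bound so that all contributions from sufficiently large $\si$ either cancel in pairs or are independent of $\Si$. Second, one must check that the convexity condition defining $\T_m$ is exactly what makes the alternating sums well behaved, using the local structure of convex subcomplexes of $[\CX_m]$. Both of these are the Lie algebra shadows of estimates established for \rt{explicit}, and the expectation is that they go through with essentially the same arguments, only easier, since one never has to pass between a group and its Lie algebra inside a single computation and the convolution $\dt_{\g_{\si,r^+}}\ast\dt_L$ is just the normalized measure on the lattice $\g_{\si,r^+}+L$.
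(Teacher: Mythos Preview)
Your approach is correct but takes a longer route than the paper's. You propose to first establish stabilization of $\{\C{E}_r^{\Si}\ast\dt_L\}_{\Si}$ by porting the building combinatorics from the group case (the Lie algebra analogue of \rp{stab}), and only afterwards identify the limit with $\C{E}_r$ via the Euler-characteristic computation on the Fourier side. The paper instead goes to the Fourier side immediately and proves the \emph{exact} identity $\C{F}(\C{E}_r^{\Si})=1_{\g^*_{\Si,-r}}$ for each finite $\Si$, where $\g^*_{\Si,-r}:=\cup_{\si\in\Si}\g^*_{\si,-r}$. This identity is precisely your Euler-characteristic step, sharpened to a statement for fixed $\Si$ (\rl{ep}): since $\C{F}(\dt_{\g_{\si,r^+}})=1_{\g^*_{\si,-r}}$, one needs $\sum_{\si\in\Si}(-1)^{\dim\si}1_{\g^*_{\si,-r}}=1_{\g^*_{\Si,-r}}$, and this holds because for each $b\in\g^*$ the set $\{\si\in\Si:b\in\g^*_{\si,-r}\}$ is a convex subcomplex (by \rl{convex}) and hence has Euler characteristic $1$ when nonempty. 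Once this exact identity is in hand, stabilization is trivial on the dual side: the compact open sets $\g^*_{\Si,-r}$ exhaust $\g^*_{-r}$, so $1_{\g^*_{\Si,-r}}\to 1_{\g^*_{-r}}$ as an obvious monotone limit. Thus the paper never needs an analogue of \rp{stab} for $\g$; the Fourier transform converts the delicate stabilization into a tautology. Your route works and is pleasantly parallel to the group case, but the paper's is shorter and exploits the additive Fourier analysis available on $\g$ that has no counterpart on $G$.
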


\begin{Emp}
{\bf An $r$-logarithm.}  By an {\em $r$-logarithm} we mean an $\Ad G$-equivariant homeomorphism
$\C{L}:G_{r^+}\isom \g_{r^+}$, which induces a homeomorphism $\C{L}_{x}:G_{x,r^+}\isom\g_{x,r^+}$ for all $x\in \CX$.
\end{Emp}

\begin{Cor} \label{C:lie}
Let $\C{L}:G_{r^+}\isom\g_{r^+}$ be an $r$-logarithm. Then the pushforward
$\C{L}_!(E_r|_{G_{r^+}})$ equals $\C{E}_r|_{\g_{r^+}}$.
\end{Cor}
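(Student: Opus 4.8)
The plan is to deduce Corollary \ref{C:lie} by transporting the explicit formula of \rco{explicit}(a) for $E_r$ through the $r$-logarithm $\C{L}$ and comparing it term-by-term with the formula of \rp{lie} for $\C{E}_r$. The key point is that both $E_r|_{G_{r^+}}$ and $\C{E}_r|_{\g_{r^+}}$ are limits, over $\Si\in\T_m$, of the explicit finite sums $E_r^{\Si}=\sum_{\si\in\Si}(-1)^{\dim\si}\dt_{G_{\si,r^+}}$ and $\C{E}_r^{\Si}=\sum_{\si\in\Si}(-1)^{\dim\si}\dt_{\g_{\si,r^+}}$ respectively, and these sums are built from the measures $\dt_{G_{\si,r^+}}$ and $\dt_{\g_{\si,r^+}}$. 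So the essential input is that $\C{L}$ matches these measures up to the appropriate normalization.

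First I would record that since $\C{L}$ induces, for each $x\in\CX$, a homeomorphism $\C{L}_x:G_{x,r^+}\isom\g_{x,r^+}$, and since for $\si\in[\CX_m]$ the groups $G_{\si,r^+}$ and $\g_{\si,r^+}$ are by definition $G_{x,r^+}$ and $\g_{x,r^+}$ for any $x\in\si$, the map $\C{L}$ restricts to a homeomorphism $G_{\si,r^+}\isom\g_{\si,r^+}$ for every $\si$; call it $\C{L}_\si$. Both $\dt_{G_{\si,r^+}}$ and $\dt_{\g_{\si,r^+}}$ are the unique probability measures on their (compact) supports, so $(\C{L}_\si)_!(\dt_{G_{\si,r^+}})=\dt_{\g_{\si,r^+}}$; here there is no Jacobian subtlety because we are pushing forward a total-mass-one measure and the target has a canonical total-mass-one measure too. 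Summing over $\si\in\Si$ with signs $(-1)^{\dim\si}$ gives, for each $\Si\in\T_m$,
\[
\C{L}_!(E_r^{\Si}|_{G_{r^+}})=\C{E}_r^{\Si}|_{\g_{r^+}},
\]
as measures (equivalently, as distributions), where I use that $\C{L}$ is a homeomorphism $G_{r^+}\isom\g_{r^+}$ so restriction to $G_{r^+}$ and pushforward commute, and that each $G_{\si,r^+}\subseteq G_{r^+}$ and $\g_{\si,r^+}\subseteq\g_{r^+}$.

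Next I would pass to the limit. For a test function $f\in C_c^\infty(\g_{r^+})$ the function $f\circ\C{L}$ lies in $C_c^\infty(G_{r^+})$, and
\[
(\C{L}_!(E_r|_{G_{r^+}}))(f)=(E_r|_{G_{r^+}})(f\circ\C{L})=\lim_{\Si\in\T_m}E_r^{\Si}(f\circ\C{L})=\lim_{\Si\in\T_m}\C{E}_r^{\Si}(f)=\C{E}_r(f),
\]
where the second equality is \rco{explicit}(a) (applicable since $f\circ\C{L}$ has compact support in $G_{r^+}$, and the $E_r^\Si$ only involve measures supported on the $G_{\si,r^+}\subseteq G_{r^+}$), the third is the displayed identity above, and the last is \rp{lie}. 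Since both distributions are supported on $\g_{r^+}$, this proves $\C{L}_!(E_r|_{G_{r^+}})=\C{E}_r|_{\g_{r^+}}$. The only genuine point requiring care — and the one I would state explicitly — is that the inductive system $\{E_r^\Si(f\circ\C{L})\}_{\Si\in\T_m}$ stabilizes to $E_r(f\circ\C{L})$ even though $f\circ\C{L}$ is supported on the open set $G_{r^+}$ rather than on all of $G$; this is immediate from \rco{explicit}(a), which is stated for arbitrary $f\in C_c^\infty(G)$, so there is in fact no obstacle here, only bookkeeping. I would therefore expect the argument to be short, with the main (very mild) subtlety being the normalization compatibility $(\C{L}_\si)_!\dt_{G_{\si,r^+}}=\dt_{\g_{\si,r^+}}$, which follows from uniqueness of probability Haar measures and functoriality of pushforward under the homeomorphism $\C{L}_\si$.
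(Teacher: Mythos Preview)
Your argument is correct and follows exactly the paper's approach: use that $\C{L}$ restricts to a homeomorphism $G_{\si,r^+}\isom\g_{\si,r^+}$ to get $\C{L}_!(\dt_{G_{\si,r^+}}|_{G_{r^+}})=\dt_{\g_{\si,r^+}}|_{\g_{r^+}}$, sum to obtain $\C{L}_!(E_r^{\Si}|_{G_{r^+}})=\C{E}_r^{\Si}|_{\g_{r^+}}$, and pass to the limit via \rco{explicit} and \rp{lie}. The paper's version is simply more terse.
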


By a theorem of Waldspurger, the Fourier transform preserves stability (see \cite{Wa} or \cite{KP});
therefore \rco{lie} easily implies that $E_r$ is stable if $\G$ admits an $r$-logarithm (see \rco{wald}). Furthermore, extending the theory
of quasi-logarithms, introduced in \cite{KV}, we show the following result.

\begin{Thm} \label{T:stable}
Assume that  $p$ is very good for $G$ (see \re{verygood}).  Then the invariant distribution $E_r$ is stable.
\end{Thm}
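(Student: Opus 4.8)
The plan is to reduce the general case to the case where $\G$ admits an $r$-logarithm, which is already handled: by \rco{lie} the pushforward of $E_r|_{G_{r^+}}$ along an $r$-logarithm is $\C{E}_r|_{\g_{r^+}}$, and $\C{E}_r=\C{F}^{-1}(1_{\g^*_{-r}})$ is visibly stable because $1_{\g^*_{-r}}$ is (the characteristic function of an $\Ad G$-invariant open-closed set is a stable function, and Waldspurger's theorem says $\C{F}$ preserves stability). So the whole point of \rt{stable} is to remove the hypothesis that a genuine $r$-logarithm exists, replacing it by the weaker and checkable hypothesis that $p$ is very good for $G$.

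First I would recall the notion of \emph{quasi-logarithm} from \cite{KV}: an $\Ad\G$-equivariant morphism $\Phi\colon\G\to\g$ defined over $F$ whose differential at the identity is the identity on $\g$ (equivalently, a $\G$-equivariant splitting, up to higher order, of the exponential-type filtration). When $p$ is very good, such a $\Phi$ exists — for classical groups one can take $g\mapsto g-\tfrac{1}{n}\operatorname{tr}(g)$ composed with a suitable trace-form identification $\g^*\cong\g$, and in general one uses that very-goodness guarantees a non-degenerate invariant bilinear form on $\g$ and that the relevant denominators are units. The key analytic input I would then establish is that a quasi-logarithm $\Phi$, although not a homeomorphism on all of $\G$, restricts for each $x\in\CX$ to a \emph{measure-preserving} bijection $G_{x,r^+}\isom\g_{x,r^+}$ compatibly with the Moy--Prasad filtration — this is essentially because $\Phi$ agrees with any honest logarithm to high enough order, and the filtration steps $G_{x,r^+}$, $\g_{x,r^+}$ are defined by the same inequalities under $\val_F$. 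Hence $\Phi$ induces an $\Ad G$-equivariant homeomorphism $G_{r^+}\isom\g_{r^+}$ matching the subgroup/lattice filtrations, i.e.\ it \emph{is} an $r$-logarithm in the sense of the definition preceding \rco{lie}.

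With that in hand the argument closes quickly: take $\C{L}:=\Phi|_{G_{r^+}}$; by \rco{lie} we get $\C{L}_!(E_r|_{G_{r^+}})=\C{E}_r|_{\g_{r^+}}$. Since $\C{E}_r$ is stable on $\g$ (as above) and stability is preserved under restriction to the open-closed $\Ad G$-invariant subset $\g_{r^+}$, the generalized function $\C{E}_r|_{\g_{r^+}}$ is stable; pulling back along the $\Ad G$-equivariant homeomorphism $\C{L}^{-1}$ (which preserves stable orbital integrals because it is defined over $F$ and intertwines the $F$-rational adjoint actions and their inner forms) shows $E_r|_{G_{r^+}}$ is stable. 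Finally, by \rco{explicit}(b) the distribution $E_r$ is supported on $G_{r^+}$, so $E_r=E_r|_{G_{r^+}}$ is stable on all of $G$.

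I expect the main obstacle to be the middle step: proving that a quasi-logarithm, which a priori is only an $\Ad\G$-equivariant polynomial-type map with the correct derivative at $1$ and which need not be injective globally, actually restricts to a filtration-compatible homeomorphism on the topologically-nilpotent/unipotent part of the building-indexed filtration when $p$ is very good. One must control the error between $\Phi$ and the formal logarithm $p$-adically and check it lies deep enough in the Moy--Prasad filtration to not disturb the identifications $G_{x,r^+}\cong\g_{x,r^+}$; this requires the very-good hypothesis to rule out small denominators (e.g.\ the primes dividing $n+1$ for type $A_n$, bad primes for exceptional types, and $p=2$ issues for $B,C,D$) and to guarantee the invariant form used to identify $\g$ with $\g^*$ is perfect on each lattice $\g_{x,r^+}$. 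Everything else — the reduction, the appeal to \rco{lie}, and the transfer of stability through Waldspurger and through the $r$-logarithm — is formal once this comparison is in place.
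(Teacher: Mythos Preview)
Your overall strategy matches the paper's: build an $r$-logarithm out of a quasi-logarithm and then invoke \rco{wald}. But you are missing one structural step that the paper actually needs, and without it your sketch does not close.

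The paper does \emph{not} construct a quasi-logarithm on $\G$ itself. It first passes through the isogeny $\pi:\G^{\sc}\times\Z(\G)^0\to\G$, whose degree divides $|\Z(\G^{\sc})|$ and is hence prime to $p$ when $p$ is very good. \rl{isog} and \rco{isog} show that $\pi$ induces homeomorphisms $G'_{x,r^+}\isom G_{x,r^+}$ and $G'_{r^+}\isom G_{r^+}$, so stability of $E_r$ for $\G$ is equivalent to stability for $\G^{\sc}\times\Z(\G)^0$, hence for $\G^{\sc}$. Only then does the paper produce a quasi-logarithm (\rl{qlogo}) and prove it restricts to an $r$-logarithm (\rl{qlog}); both lemmas are stated and proved under the hypothesis that $\G$ is semisimple and simply connected. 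The proof of \rl{qlog} uses $G^0=G$ (via \rl{adler}) to descend through Galois extensions, and this equality fails for non-simply-connected groups. Your proposal to build $\Phi$ directly on $\G$ and show it is an $r$-logarithm there is not obviously wrong, but it is not what the paper does, and you would have to redo both existence and the filtration-compatibility argument in that generality. A quasi-logarithm on $\G^{\sc}\times\Z(\G)^0$ does not descend along $\pi$: the map is $\Ad$-equivariant, but the kernel of $\pi$ acts by translation, not by $\Ad$.

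A minor point: your explanation of why quasi-logarithms exist conflates two different uses of a nondegenerate invariant form. The form is used to produce a $\G$-equivariant projection $\mathfrak{gl}(V)\to\g$ from a faithful representation $\G\hookrightarrow\GL(V)$, giving $\Phi(g)=\pr_\g(g-1)$; it is not used here to identify $\g$ with $\g^*$.
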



\begin{Emp} \label{E:char0}
{\bf Remarks.} (a) If $F$ is of characteristic zero, one can show that $E_r$ is stable if $p$ is good (see \re{verygood} and \re{good}). In particular, in this case  $E_r$ is stable if $p>5$.

(b) Notice that since the proof of a theorem of Waldspurger is global, for a general $r$ our proof of the stability of $E_r$ is global. On the other hand, when $r\in\B{N}$, we can deduce the stability of $E_r$ from that of $E_0$ (see \re{remwald}(c)), thus providing a purely local proof in this case.

(c) Allen Moy has informed us that he has independently conjectured \rco{lie}
(for large $r$ and fields of characteristic zero), found a proof for
$\G=\SL{2}$ and discovered its relation to the stability of the Bernstein projectors (see \cite{Mo}).
\end{Emp}

\begin{Emp}
{\bf Plan of the paper.} The paper is organized as follows. In Section 2 we review basic properties of Bruhat--Tits buildings and then present the
construction in the split case. In Section 3 we recall the construction and basic properties of Moy--Prasad filtrations, first in the split case
and then in general. In order to make our presentation more elementary, we do not use N\'eron models.

In Sections 4-5 we prove the  stabilization assertion needed for \rt{explicit}. Then, in Section 6, we complete the proof of \rt{explicit},
deduce Corollaries \ref{C:explicit}, \ref{C:formchar} and \ref{C:Av}, and prove the Lie algebra analogues (\rp{lie} and \rco{lie}).

In Section 7 we compare the projector to depth zero with the character of the Steinberg representation (\rt{steinberg}).
Finally, in Section 8, we show the stability of the projector
(\rco{stable} and \rt{stable}).

In appendices we prove several assertions, stated in the main part of the paper without proofs. Namely, in Appendix A
we provide details on various properties of the Moy--Prasad filtrations, well-known to specialists, formulated in Section 3. In Appendix B
we study congruence subsets, used in Section 8.

Finally, in Appendix C we review the theory of the quasi-logarithms introduced in \cite{KV,KV2} and deduce the existence of $r$-logarithms. This is used
in the proof of \rt{stable}, and has other applications as well.
\end{Emp}

\begin{Emp}
{\bf General case versus split case.} The constructions of Bruhat--Tits buildings and Moy--Prasad filtrations
are much more transparent when $\G$ is split. On the other hand, once Bruhat--Tits buildings and Moy--Prasad filtrations
are constructed and their properties are established, the argument in the general case is identical to the split one.
\end{Emp}




\section{Bruhat--Tits buildings} \label{S:bt}

\noindent In this section we formulate basic properties of Bruhat--Tits buildings (see \cite{BT1,BT2}) and then review the construction
in the case when $\G$ is split.

\begin{Emp} \label{E:bt}
{\bf The Bruhat--Tits building.} Let $\G^{\ad}$ be the adjoint group of $\G$.

(a) For every maximal split torus $\S\subseteq\G$, we denote by $\S_{\G^{\ad}}$ the corresponding maximal split torus of $\G^{\ad}$ and
consider the $\B{R}$-vector space $V_{\G,\S}:=X_*(\S_{\G^{\ad}})\otimes_{\B{Z}}\B{R}$, where $X_*(\cdot)$ denotes the group of cocharacters.
We equip each $V_{\G,\S}$ with a $W(\G,\S)$-invariant inner product, such that for every $g\in G$ the induced map
$\Ad g: V_{\G,\S}\isom  V_{\G,g\S g^{-1}}$ is orthogonal.

(b) We denote by $\C{A}_{\S}=\C{A}_{\G,\S}$ the ``canonical" affine space under $V_{\G,\S}$ (see  \re{apart} below in the split case
and \cite[1.2]{Ti} or \cite[1.9]{La}, in general). We equip each $\C{A}_{\S}$ with a metric induced by the inner product on $V_{\G,\S}$, chosen in (a).
$\CA_{\S}$ is called the {\em apartment corresponding to $\S$}.


(c) The {\em (reduced) Bruhat--Tits building} $\C{X}=\C{X}(\G)$ of $\G$ is a $G$-equivariant metric space $\C{X}=\C{X}(G)$, equipped with a decomposition  $\C{X}=\cup_{\S}\C{A}_{\S}$ into a union of apartments, indexed by maximal split tori, such that each inclusion $\CA_{\S}\hra\CX$ is distance preserving.
\end{Emp}

\begin{Emp} \label{E:rembt}
{\bf Remarks.} (a) $\C{X}(\G)$ depends only on the adjoint group $\G^{\ad}$.

(b) If $\G=\prod_i\G_i$, then $\CX(\G)$ is the product $\prod_i\CX(\G_i)$. In particular, study of $\CX(\G)$ often reduces to the case when $\G$ is simple and adjoint.

(c) If $\G$ is simple, then a metric on $\CX(\G)$ is uniquely defined up to a multiplication by a scalar.
\end{Emp}

%

\begin{Emp} \label{E:root}
{\bf Affine root subgroups.}
Let $\S\subseteq\G$ be a maximal split torus.

(a) For every root $\al\in \Phi(\G,\S)$, we denote by $\u_{\al}\subseteq\g$ the corresponding root subspace. We also denote by
$\U_{\al}\subseteq\G$ the corresponding root subgroup (see \cite[21.9]{Bo2}), and set $U_{\al}:=\U_{\al}(F)$.
By definition, $\U_{\al}$ is a connected unipotent group, whose Lie algebra is $\u_{\al}\oplus\u_{2\al}$.
There exists a canonical isomorphism  $U_{\al}/U_{2\al}\isom\u_{\al}$, hence a canonical surjection $\iota_{\al}:U_{\al}\to\u_{\al}$.

Note that if  $\G$ is split, then both $\u_{2\al}$ and $U_{2\al}$ are trivial, thus $\iota_{\al}$ is an isomorphism.

(b) Let $\C{A}:=\C{A}_{\S}$ be the apartment, corresponding to $\S$. We denote by $\Psi(\C{A})$ the set of affine roots (see \cite[1.6]{Ti}). Each $\psi\in \Psi(\C{A})$ is an affine function of $\C{A}$, whose vector part $\al=\al_{\psi}\in (V_{\G,\S})^*$
belongs to $\Phi(\C{A}):=\Phi(\G,\S)$.

(c) We denote by $U_{\psi}\subseteq U_{\al}$ the affine root subgroup corresponding to $\psi$
(see \cite[1.4]{Ti}), and we set $\u_{\psi}:= \iota_{\al}(U_{\psi})\subseteq \u_{\al}$.
Then $\u_{\psi}\subseteq \u_{\al}$ is an $\C{O}$-submodule (see \re{lattice}(a)).
\end{Emp}

\begin{Emp} \label{E:buildings}
{\bf Properties of buildings.}
The following standard properties of the Bruhat--Tits building $\CX$ will be used later.

(a) Every two points $x,y\in\CX$ belong to an apartment (see \cite[Prop. 13.12]{La}).

(b) For every two apartments $\C{A},\C{A}'\subseteq\C{X}$ there exists a distance preserving isomorphism of affine spaces
$\C{A}\isom \C{A}'$, which is the identity on $\C{A}\cap\C{A}'$ and induces a bijection $\Psi(\CA')\isom \Psi(\CA)$ between the sets of affine roots
(see \cite[Prop 13.6]{La}).

(c) For every two points $x,y\in\CX$ there exists a unique geodesic $[x,y]\subseteq\C{X}$. Moreover, $[x,y]$ is a geodesic in $\CA$ for every apartment $\CA\ni x,y$ (by (b)).
\end{Emp}

\begin{Emp} \label{E:bc}
{\bf Base change.}
For a finite Galois extension $K/F$, we denote by $\G_K$ the base change of $\G$ from $F$ to $K$.
Then the building $\CX(\G_{K})$ is equipped with an action of the Galois group $\Gal(K/F)$, and we have a natural $G$-equivariant embedding $\CX(\G)\hra\CX(\G_{K})^{\Gal(K/F)}$. Moreover, the latter inclusion is an isomorphism
if $K/F$ is unramified. We denote the image of $x\in \CX(\G)$ in $\CX(\G_{K})$ simply by $x$.
\end{Emp}

\begin{Emp} \label{E:poly}
{\bf Polysimplicial decomposition.}
(a) The Bruhat--Tits building $\C{X}$ is equipped with a decomposition into a disjoint union
of (open) polysimplices, that is, products of simplices (see (b) below). Moreover, each apartment $\CA\subseteq\CX$ is a union of polysimplices. We denote by
$[\CX]$ (resp. $[\CA]$) the set of polysimplices in $\CX$ (resp. $\CA$).

(b) More precisely, two points $x,y\in \CA$ belong to a polysimplex if and only if for every $\psi\in\Psi(\CA)$ we have $\psi(x)\geq 0$ if and only if $\psi(y)\geq 0$, while two points $x,y\in \CX$ belong to a polysimplex if and only if they belong to a polysimplex in $\CA$ for some apartment $\C{A}$ containing $x$ and $y$
(see \re{buildings}(a)).

(c) By property \re{buildings}(b), if two points $x,y\in \CX$ belong to a polysimplex, then they belong to a polysimplex in $\CA$ for every apartment $\C{A}$ containing $x$ and $y$.

(d) It follows from \re{buildings}(a) that for every pair of polysimplices $\si,\tau\in[\CX]$ there exists  an apartment $\C{A}\supseteq\si,\tau$.
\end{Emp}

\begin{Emp} \label{E:ref}
{\bf Refined affine roots.} Let $\C{A}\subseteq\C{X}$ be an apartment and $m\in\B{N}$.

(a) For every $\psi\in\Psi(\C{A})$ there exists $n_{\psi}\in\B{Z}_{>0}$ such that
the set of $\psi'\in \Psi(\C{A})$, whose vector part is $\al_{\psi}$, equals $\psi+\frac{1}{n_{\psi}}\B{Z}$ (see \re{lattice}(c)).
In particular, we have $\psi+\B{Z}\subseteq  \Psi(\C{A})$ for every $\psi\in \Psi(\C{A})$.
Note that if $\G$ is split, then  $n_{\psi}=1$ for all $\psi$ (see \re{rig}(a)).

(b) We denote by $\Psi_m(\CA)$ the set of affine functions on $\C{A}$ of the form
$\psi+\frac{k}{m n_{\psi}}$, where $\psi\in\Psi(\C{A})$ and  $k\in\B{Z}$. In particular, $\Psi_m(\CA)\supseteq\Psi(\CA)$, and for every $\psi\in\Psi_m(\CA)$, we have $\psi+\frac{1}{m}\B{Z}\subseteq\Psi_m(\CA)$.

(c) We denote by  $[\CX_m]$ (resp. $[\CA_m]$) the set of polysimplices in $\CX$ (resp. $\CA$), obtained by the
same procedure as in \re{poly}(b), but replacing $\Psi(\CA)$ by $\Psi_m(\CA)$. Alternatively,
polysimplices in $[\CX_m]$ (resp. $[\CA_m]$) are obtained by a subdivision of each polysimplex $\si\in[\CX]$ (resp. $\si\in[\CA]$) into $m^{\dim\si}$ smaller polysimplices.
\end{Emp}

For the convenience of the reader, we now recall the construction of the building $\CX(\G)$ when $\G$ is split. Replacing $\G$ by $\G^{\ad}$ and decomposing $\G$ into a product of simple factors, we can assume that $\G$ is simple and adjoint (see \re{rembt}(a),(b)).






\begin{Emp} \label{E:rig}
{\bf Notation.}
Let $\S\subseteq\G$ be a maximal split torus.

(a) Consider the vector space $V_{\G,\S}:=X_*(\S)\otimes_{\B{Z}}\B{R}$. Define the set $\Psi(\G,\S)$ of {\em affine roots} as the set of affine functions on
$V_{\G,\S}$ of the form $\psi_{\al,k}:=\al+k$ where $\al\in\Phi(\G,\S)$ and $k\in\B{Z}$. Note that
the lattice $X_*(\S)$ acts on $V_{\G,\S}$ by translations, and the set $\Psi(\G,\S)$ of affine roots is $X_*(\S)$-invariant.

(b) The adjoint action of $\S$ on $\g$ defines a decomposition $\g=\u_0\oplus(\oplus_{\al\in\Phi(\G,\S)}\u_{\al})$ into a direct sum of weight spaces,
where $\u_0=\Lie\S$. Also for every $\al\in\Phi(\G,\S)$ we have a canonical isomorphism $\iota_{\al}:U_{\al}\isom\u_{\al}$.

(c) Since $\S$ is a split torus, it has a natural structure $\frak{S}$ over $\C{O}$. By an {\em $\C{O}$-structure} of $(\g,\S)$, we mean
an $\C{O}$-lattice $\C{L}\subseteq\g$ of the form $\C{L}=\Lie\frak{G}$ for some split reductive group scheme
$\frak{G}$ over $\C{O}$ containing $\frak{S}$, whose generic fiber is $\G$.

Note that $\C{O}$-structures of $(\g,\S)$ exist. Moreover, any $\C{O}$-structure $\C{L}$ has a decomposition $\C{L}=\C{L}_0\oplus(\oplus_{\al\in\Phi(\G,\S)}\C{L}_{\al})$, where $\C{L}_{\al}\subseteq\u_{\al}$ is an $\C{O}$-lattice, and
$\C{L}_0=\Lie\frak{S}$.

(d) To every $\C{O}$-structure $\C{L}$ of $(\g,\S)$ and every affine root $\psi=\psi_{\al,k}\in\Psi(\G,\S)$, we associate an $\C{O}$-lattice
$\u_{\psi,\C{L}}:=\varpi^k\C{L}_{\al}\subseteq\u_{\al}$ and a subgroup $U_{\psi,\C{L}}=\iota_{\al}^{-1}(\u_{\psi,\C{L}})\subseteq U_{\al}$.

(e) Let $\C{L}$ and $\C{L}'$ be two $\C{O}$-structures of $(\g,\S)$. Since $\G$ is adjoint,  there exists an element $s\in \S(F)$ such that
$\Ad s(\C{L})=\C{L}'$. We denote by $\la=\la_{\C{L},\C{L}'}$ the image of $s$ in $X_*(\S)=\S(F)/\S(\C{O})$. Then $\la$ only depends on a pair $(\C{L},\C{L}')$ and
can be characterized as a unique element of $X_*(\S)$ such that $U_{\la^*(\psi),\C{L}}=U_{\psi,\C{L}'}$ for all $\psi\in\Psi(\G,\S)$.
\end{Emp}

\begin{Emp} \label{E:apart}
{\bf The apartment $\C{A}_{\S}$} (compare \cite[1.1]{Ti}).
(a) We denote by $\C{A}_{\S}$ the projective limit $\lim_{\C{L}}V_{\G,\S}$, where $\C{L}$ runs over the set of all $\C{O}$-structures of $(\g,\S)$,
and the transition maps are the isomorphisms $\la_{\C{L},\C{L}'}$ from \re{rig}(e). By construction, $\C{A}_{\S}$ is an affine space under $V_{\G,\S}$,
and we are given an affine isomorphism $\varphi_{\C{L}}:\C{A}_{\S}\isom V_{\G,\S}$
for every $\C{O}$-structure $\C{L}$. Moreover, $\C{A}_{\S}$ is equipped with a metric such that each $\varphi_{\C{L}}:\C{A}_{\S}\isom V_{\G,\S}$
is distance preserving.

(b) By construction,  $\C{A}_{\S}$ is  equipped with a set of affine roots
$\Psi(\C{A}_{\S})$ such that $\Psi(\C{A}_{\S})=\varphi_{\C{L}}^*(\Psi(\G,\S))$ for every $\C{O}$-structure $\C{L}$.
Moreover, for every affine root $\psi\in \Psi(\C{A}_{\S})$ with linear part $\al\in\Phi(\G,\S)$,
we are given a subgroup $U_{\psi}\subseteq U_{\al}$ such that $U_{\psi}=U_{(\varphi_{\C{L}})_*(\psi),\C{L}}\subseteq U_{\al}$ for every $\C{O}$-structure
$\C{L}$.

(c) For every $x\in \CA_{\S}$ we denote by $G_x\subseteq G$ the subgroup generated by $\S(\C{O})$ and the affine root subgroups $U_{\psi}$
taken over all $\psi$ such that $\psi(x)\geq 0$. It is called the {\em parahoric subgroup}, corresponding to $x$.
\end{Emp}

\begin{Emp} \label{E:simpl}
{\bf The simplicial decomposition of $\CA_{\S}$.}
(a) By the same formulas as in \re{poly}(b), the affine roots  $\Psi(\CA_\S)$ decompose $\CA_{\S}$ into simplices, thus giving to
$\CA_{\S}$ a structure of a simplicial complex.  (This is the only place, where  the assumption that $\G$ is simple is used).

(b) Let $x\in \CA_{\S}$, and let $\si$ be a unique simplex of $\CA_{\S}$ containing $x$.
Then $x$ can be written uniquely as a convex linear combination $x=\sum_{i}c_ix_i$, where $x_i$ runs through the set of all vertices of $\si$ (see \re{notcomb}(c)),
$0<c_i<1$ for all $i$ and $\sum_i c_i=1$.
\end{Emp}

 \begin{Emp}
{\bf The Bruhat--Tits building.}
(a) $\S,\S'\subseteq\G$ be maximal split tori, and let $x\in\CA_{\S}$ and $x'\in\CA_{\S'}$ be vertices. We say that $x\sim x'$, if the corresponding parahoric subgroups
(see \re{apart}(c)) are equal, that is, $G_x=G_{x'}$.

(b) Let $\S$ and $\S'$ be as in (a), and let $x\in\CA_{\S}$ and $x'\in\CA_{\S'}$ be arbitrary. We say that $x\sim x'$, if (after a permutation of vertices) the convex combinations $x=\sum_{i}c_ix_i$ and  $x'=\sum_{i}c'_ix'_i$ from \re{simpl}(b) satisfy $x_i\sim x'_i$ and $c'_i=c_i$ for all $i$. Clearly, $\sim$ is an equivalence relation.

(c) The (reduced) Bruhat--Tits building of $\G$ is the quotient of the disjoint union of apartments $\sqcup_{\S}\C{A}_{\S}$ by the equivalence relation $\sim$ defined in (b).
\end{Emp}

\section{Moy--Prasad filtrations} \label{S:mpf}

\noindent In this section we review the construction and basic properties of the Moy--Prasad filtrations (see \cite{MP1,MP2}) first in the split case, and then in general.

\begin{Emp} \label{E:split}
{\bf Filtration for split tori.} Let $\TT$ be a split torus, $\t:=\Lie\TT$, and $r\in\B{R}_{\geq 0}$.

(a) We denote by $T_r\subseteq T$ the subgroup of all $t\in T$ such that $\val_F(\la(t)-1)\geq r$ for every character $\la$ of $\TT$.
Similarly, we denote by $\t_r\subseteq\t$ the $\C{O}$-module consisting of all $a\in \t$ such that $\val_F(d\la(a))\geq r$ for every  $\la$.

(b) Note that $\TT$ has a natural structure over $\C{O}$, and $T_0=\TT(\C{O})\subseteq T$ is the maximal compact subgroup.
Moreover, let $n$ be the smallest integer such that $n\geq r$. Then $T_r$ is the kernel of the reduction map
$\TT(\C{O})\to \TT(\C{O}/(\varpi)^n)$. Similarly, $\t_0=\t(\C{O})$, and $\t_r$ is the kernel of the reduction map
$\t(\C{O})\to \t(\C{O}/(\varpi)^n)$.
\end{Emp}

\begin{Emp} \label{E:mpsplit}
{\bf Moy--Prasad filtrations for split groups.} Assume that $\G$ is split. Fix $x\in\C{X}$ and $r\geq 0$. Choose an apartment $\CA\subseteq\CX$
containing $x$, let $\S\subseteq\G$ be the corresponding maximal split torus, and set $\TT:=\Z_{\G}(\S)$ be the centralizer.

(a) Then $\TT=\S$, and the  Moy--Prasad subgroup $G_{x,r}\subseteq G$ is defined to be the subgroup, generated by $T_r$ and the affine root subgroups $U_{\psi}$,
where $\psi$ runs over all elements of $\Psi(\CA)$ such that $\psi(x)\geq r$.
Next, we denote by $\g_{x,r}\subseteq \g$ the $\C{O}$-submodule, spanned by $\t_r$ and $\u_{\psi}$
for all $\psi\in\Psi(\CA)$ with $\psi(x)\geq r$.

(b) Using property \re{buildings}(b) of the Bruhat--Tits buildings, one can show that both $G_{x,r}$ and $\g_{x,r}$ do not depend on a choice $\C{A}$.

(c) We set $G_{x,r^+}:=\cup_{s>r}G_{x,s}$, and $\g_{x,r^+}:=\cup_{s>r}\g_{x,s}$. Clearly, $G_{x,r^+}=G_{x,r'}$ and
$\g_{x,r^+}=\g_{x,r'}$ for some $r'>r$. We also denote by $\g^*_{x,-r}\subseteq\g^*$ the $\C{O}$-submodule, consisting of all
$b\in\g^*$ such that $\lan b,a\ran\in(\varpi)$ for every $a\in \g_{x,r^+}$.

(d) By definition, for every $x\in\CX$ the subgroup $G_{x,0}$ is the parahoric subgroup $G_x$ corresponding to $x$ (see \re{apart}(c)).
\end{Emp}




Next we define Moy--Prasad filtrations in general.

\begin{Emp} \label{E:filtor}
{\bf The Moy--Prasad filtration for tori.} Let $\TT$ be a torus over $F$.

(a) Let $\Gm_{\nr}:=\Gal(\ov{F}/F^{\nr})$. We let $w_{\TT}:\TT(F^{\nr})\to  X_*(\TT)_{\Gm_{\nr}}$ be the homomorphism, constructed by  Kottwitz (see \cite[Section 7]{Ko}), and set $T_0:=T\cap\Ker w_{\TT}$. Note that this definition coincides with that from \re{split} when $\TT$ is split.

(b) Let $F'/F$ be the splitting field of $\TT$, and let $e$ be the ramification degree of $F'/F$. We set $\TT':=\TT_{F'}$, $\t:=\Lie\TT$, and  $\t':=\Lie\TT'$.
Since $\TT'$ is split, the Moy--Prasad subgroups (resp. sublattices) of $T'$ (resp $\t'$) are defined (see \re{split}), and we set
$T_r:=T'_{re}\cap T_0$ and $\t_r:=\t'_{re}\cap \t$.
\end{Emp}

\begin{Emp}
{\bf Remark.}
Alternatively, $T_0\subseteq T$ can be defined as a group of $\C{O}$-points of the connected N\'eron model of $\TT$ (see \cite{HR}).
\end{Emp}

\begin{Emp} \label{E:mp}
{\bf Moy--Prasad filtrations in general.} Let $x$, $r$, $\CA$ and $\S$ be as in \re{mpsplit}.

(a) Assume that $\G$ is quasi-split. Then $\TT=\Z_{\G}(\S)$ is a maximal torus of $\G$, and we define
the subgroup $G_{x,r}\subseteq G$ and the $\C{O}$-submodule $\g_{x,r}\subseteq \g$ by the same formulas as in
the split case (see \re{mpsplit}) except that $T_r$ and $\t_r$ are defined in \re{filtor} instead of \re{split}, and
$U_{\psi}$ and $\u_{\psi}$ are defined in \re{root}(c) instead of \re{apart}(b).
As in the split case, both $G_{x,r}$ and $\g_{x,r}$ do not depend on a choice of $\C{A}$.

(b) For an arbitrary $\G$, let $F'/F$ be a finite unramified extension of minimal degree such that $\G':=\G_{F'}$ is quasi-split (see \rl{f'}). Then $G'_{x,r}$ and $\g'_{x,r}$ were defined in (a), and set $G_{x,r}:=G'_{x,r}\cap G$ and $\g_{x,r}:=\g'_{x,r}\cap \g$.

(c) We define $G_{x,r^+}$, $\g_{x,r^+}$ and  $\g^*_{x,-r}$ as in \re{mpsplit}(c). Then
$G_{x,r^+}\subseteq G_{x,r}$ is a normal subgroup.
\end{Emp}

\begin{Emp} \label{E:parah}
{\bf Subgroup $G^0\subseteq G$ and parahoric subgroups.} Let  $\G^{\sc}$ be the simply connected covering of the derived group of $\G$, and let $\iota:\G^{\sc}\to\G$ be the natural homomorphism.

(a) Assume that $\G$ is quasi-split, and let $\TT=\Z_{\G}(\S)$ be as in \re{mp}(a). Then $G^0:=T_0\cdot \iota(G^{\sc})\subseteq G$
is a normal subgroup of $G$, independent of $\S$.

(b) In general, we consider the unramified extension $F'/F$ as in \re{mp}(b). Then $(G')^0\subseteq G'$ is defined in (a), and
we set $G^0:=G\cap (G')^0$.

(c) Arguing as in \cite{HR} one can show that $G^0$ is equal to $G\cap\Ker w_\G\subset\G(F^{\nr})$, where $w_\G$ is the Kottwitz homomorphism (\cite[Section 7]{Ko}) for $\G$.

(d) By (c) and \cite{HR}, for each $x\in\C{X}$ the parahoric subgroup
$G_x:=G_{x,0}$ is equal to the stabilizer $\Stab_{G^0}(x)$ of $x$ in $G^0$.
\end{Emp}

\begin{Emp} \label{E:remmp}
{\bf Remarks.} (a) Let $F^{\flat}/F$  be a finite unramified extension, set $\Gm^{\flat}:=\Gal(F^{\flat}/F)$  and $\G^{\flat}:=\G_{F^{\flat}}$.
Then we have the equalities $G_{x,r}=(G^{\flat}_{x,r})^{\Gm^{\flat}}$ and $\g_{x,r}=(\g^{\flat}_{x,r})^{\Gm^{\flat}}$. Indeed, for $G_x$ the assertion follows from \re{parah}(d), while the remaining cases follow from \re{unr}(e) and \rl{iwahori}(b).

(b) Formally speaking, our definitions of $G_{x,r}$ and $g_{x,r}$ differ from the original definitions of Moy--Prasad. However, the two definitions are equivalent. Namely, the equivalence for $G_{x,r}$
can be shown by the same arguments as in (a), while the equivalence for $\g_{x,r}$ can be shown by the same argument as in \re{lattice}(a).

(c) It can be shown that every $\g_{x,r}\subseteq\g$ is a Lie subalgebra over $\C{O}$, but we are not going to use this fact.
\end{Emp}


The following property of Moy--Prasad filtrations was used in \re{tm} and \re{mplie}.

\begin{Lem} \label{L:mpfil}
For every $\si\in [\C{X}_m]$, $x,y\in\si$ and $r\in\frac{1}{m}\B{Z}$, we have
equalities $G_{x,r}= G_{y,r}$, $G_{x,r^+}= G_{y,r^+}$, $\g_{x,r}=\g_{y,r}$ and $\g_{x,r^+}=\g_{y,r^+}$.
\end{Lem}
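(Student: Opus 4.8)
The plan is to reduce everything to a statement about affine roots, and then to the defining formulas for the Moy--Prasad subgroups. Recall that by definition $x,y\in\si\in[\CX_m]$ means that for every refined affine root $\psi\in\Psi_m(\CA)$ (for an apartment $\CA$ containing $\si$, hence containing both $x$ and $y$ by \re{poly}(c)) we have $\psi(x)\geq 0$ if and only if $\psi(y)\geq 0$. I would first establish the key combinatorial observation: for $r\in\frac{1}{m}\B{Z}$ and $\psi\in\Psi(\CA)$, the condition $\psi(x)\geq r$ depends only on the polysimplex of $[\CX_m]$ containing $x$. Indeed, $\psi(x)\geq r$ is equivalent to $(\psi-r)(x)\geq 0$, and since $r\in\frac{1}{m}\B{Z}\subseteq\frac{1}{m n_\psi}\B{Z}$, the affine function $\psi-r$ lies in $\Psi_m(\CA)$ by the description in \re{ref}(b). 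Therefore $(\psi-r)(x)\geq 0\iff(\psi-r)(y)\geq 0$, i.e. $\psi(x)\geq r\iff\psi(y)\geq r$. The same argument with $r$ replaced by values $s>r$ sufficiently close to $r$ (and using that $G_{x,r^+}=G_{x,r'}$, $\g_{x,r^+}=\g_{x,r'}$ for some $r'>r$, which may be taken in $\frac1m\B{Z}$) handles the $r^+$ versions.

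Next I would feed this into the definitions. In the split (or quasi-split) case, $G_{x,r}$ is generated by $T_r$ and the affine root subgroups $U_\psi$ over all $\psi\in\Psi(\CA)$ with $\psi(x)\geq r$ (see \re{mpsplit}(a), \re{mp}(a)); the set of such $\psi$ is the same for $x$ and $y$ by the previous paragraph, and $T_r$ does not depend on $x$ at all, so $G_{x,r}=G_{y,r}$. The identical argument with $\u_\psi$ and $\t_r$ gives $\g_{x,r}=\g_{y,r}$, and the $r^+$ statements follow by taking unions over $s>r$, or directly by the refined version of the combinatorial observation. For a general (not necessarily quasi-split) $\G$, I would pass to the minimal finite unramified extension $F'/F$ with $\G'=\G_{F'}$ quasi-split as in \re{mp}(b): the building $\CX(\G)$ embeds $G$-equivariantly in $\CX(\G')$ (with the same points, by \re{bc}, since $F'/F$ is unramified), the polysimplicial structures are compatible, and $G_{x,r}=G'_{x,r}\cap G$, $\g_{x,r}=\g'_{x,r}\cap\g$ by definition; applying the quasi-split case over $F'$ gives $G'_{x,r}=G'_{y,r}$ and hence $G_{x,r}=G_{y,r}$, and similarly for the Lie algebra and for the $r^+$ versions.

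The one point requiring a little care --- and the place I would expect the main friction --- is the compatibility of the refined polysimplicial decomposition under the unramified base change $F'/F$, i.e. checking that if $x,y$ lie in a common polysimplex of $[\CX_m(\G)]$ then they lie in a common polysimplex of $[\CX_m(\G')]$. This comes down to comparing the affine root systems $\Psi(\CA)$ for $\G$ and for $\G'$ on a common apartment: the set $\Psi(\CA_{\G'})$ restricted to $\CA_\G$ refines (or is compatible with) $\Psi(\CA_\G)$ in a way controlled by the integers $n_\psi$, and one must check this refinement is absorbed by passing from $\Psi$ to $\Psi_m$. This is essentially bookkeeping with the constants $n_\psi$ from \re{ref}(a) and the normalization of valuations ($\val_{F'}$ versus $\val_F$), and is exactly the kind of compatibility that the paper defers to Appendix A; I would either invoke the relevant lemma from there or spell out the comparison of affine root sets on a fixed apartment. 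Everything else is immediate from the definitions once the combinatorial observation in the first paragraph is in hand.
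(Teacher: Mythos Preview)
Your approach is essentially the paper's: reduce to the quasi-split case, fix an apartment $\CA\supseteq\si$, and use that $\psi-r\in\Psi_m(\CA)$ for $r\in\frac{1}{m}\B{Z}$ to translate the polysimplex condition into $\psi(x)\geq r\iff\psi(y)\geq r$. Two remarks.

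First, your primary suggestion for the $r^+$ case---finding $r'\in\frac{1}{m}\B{Z}$ with $G_{x,r^+}=G_{x,r'}$---does not work as stated. The jumps of the filtration $s\mapsto G_{x,s}$ occur at the values $\psi(x)$ for $\psi\in\Psi(\CA)$, and for a generic point $x$ inside a polysimplex these need not lie in $\frac{1}{m}\B{Z}$; there is no reason that $G_{x,r^+}=G_{x,r+1/m}$. The paper (and your parenthetical ``or directly by the refined version of the combinatorial observation'') handles this correctly by proving the strict inequality $\psi(x)>r\iff\psi(y)>r$ in one stroke: since $\psi-r\in\Psi_m(\CA)$ and $\Psi_m(\CA)=-\Psi_m(\CA)$, the polysimplex condition gives $\mathrm{sign}((\psi-r)(x))=\mathrm{sign}((\psi-r)(y))$, which covers both $\geq r$ and $>r$ simultaneously.

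Second, you are right to flag the compatibility of $[\CX_m]$ under the unramified base change $F'/F$; the paper simply writes ``we may assume that $\G$ is quasi-split'' without further comment. Your diagnosis that this reduces to comparing $\Psi_m(\CA_\G)$ with the restriction of $\Psi_m(\CA_{\G'})$ is the right one.
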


\begin{proof}
Replacing $F$ by a finite unramified extension, if necessary, we may assume that $\G$ is quasi-split.
Choose an apartment $\C{A}\supseteq\si$. Then we have to show that for every  $\psi\in\Psi(\C{A})$, we have
$\psi(x)\geq r$ (resp. $\psi(x)>r$) if and only if $\psi(y)\geq r$ (resp. $\psi(y)>r$).
Since $\psi-r\in\Psi_m(\CA)$ (see \re{ref}(b)), this follows from the definition of the refined decomposition
(see \re{ref}(c)).
\end{proof}

\begin{Emp} \label{E:notmp}
{\bf Notation.}
Let $\S\subseteq\G$  be a maximal split torus. Then $\M:=\Z_\G(\S)$ is a minimal Levi subgroup of $\G$.
Thus $\M^{\ad}$ is anisotropic, hence the building $\CX(\M)$ is a single point $\{x_\M\}$. We set $\mm:=\Lie\M$ and define
$M_r:=M_{x_{\M},r}$ and $\mm_r:=\mm_{x_{\M},r}$.
\end{Emp}

The following basic property of Moy--Prasad filtrations follows from definitions when $\G$ is quasi-split, and it follows from Galois descent
(see \re{pfmp}) in general.

\begin{Prop} \label{P:mp}
Let $\S$ and $\M$ be as in \re{notmp}, set $\C{A}:=\C{A}_{\S}$, and choose $x\in \C{A}$ and  $r\in\B{R}_{\geq 0}$.

(a) The subgroup $G_{x,r}$ (resp. $G_{x,r^+}$) of $G$ is generated by  $M_{r}$ and the affine root subgroups $U_{\psi}$,
where $\psi$ runs over all elements of $\Psi(\CA)$ such that $\psi(x)\geq r$ (resp. $\psi(x)>r$).

(b) The $\C{O}$-module  $\g_{x,r}$ (resp. $\g_{x,r^+}$) of $\g$ is spanned by  $\mm_{r}$
and the $\C{O}$-submodules $\u_{\psi}$, where $\psi$ runs over all elements of $\Psi(\CA)$ such that $\psi(x)\geq r$ (resp. $\psi(x)>r$).
\end{Prop}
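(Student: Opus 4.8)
The plan is to reduce the general case to the quasi-split case by unramified Galois descent, exactly as indicated in the paragraph preceding the statement. So let $F'/F$ be the finite unramified extension from \re{mp}(b) over which $\G' := \G_{F'}$ becomes quasi-split, put $\Gm := \Gal(F'/F)$, and recall that by definition $G_{x,r} = G'_{x,r}\cap G$ and $\g_{x,r} = \g'_{x,r}\cap\g$ (and similarly for $r^+$). By \re{remmp}(a) we in fact have $G_{x,r} = (G'_{x,r})^{\Gm}$ and, by the argument alluded to there, $\g_{x,r} = (\g'_{x,r})^{\Gm}$. Over $F'$ the group $\G'$ is quasi-split, so by \re{mp}(a) the subgroup $G'_{x,r}$ is generated by $M'_{r}$ and the affine root subgroups $U'_{\psi}$ with $\psi\in\Psi(\CA_{F'})$, $\psi(x)\geq r$ (and the $\C{O}_{F'}$-module $\g'_{x,r}$ is spanned by $\mm'_r$ and the $\u'_{\psi}$); here $\S':=\S_{F'}$ is still a maximal split torus of $\G'$ since $F'/F$ is unramified, and $\M'=\Z_{\G'}(\S')$. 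Thus the content of the proposition is that taking $\Gm$-invariants is compatible with these generation statements.

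The key steps, in order: First, I would record that the apartment $\CA=\CA_\S$ embeds $\Gm$-equivariantly into $\CA_{\S'}=\CA_{\S,F'}$ as the fixed-point set (by \re{bc}, since $F'/F$ is unramified), so that $\Psi(\CA)$ is the image of $\Psi(\CA_{F'})$ under restriction, each $\Gm$-orbit on $\Psi(\CA_{F'})$ restricting to a single affine root of $\CA$ whose value at $x$ is unchanged. Second, for the Lie algebra statement (b): the decomposition $\g' = \mm'\oplus(\oplus_{\al}\u'_{\al})$ is $\Gm$-stable with $\Gm$ permuting the summands according to its action on $\Phi(\G',\S')$, and $\g'_{x,r}$ is a direct sum of its intersections with these pieces; taking $\Gm$-invariants commutes with the finite direct sum, $(\mm')^{\Gm}=\mm$, and $(\oplus_{\al\in \text{orbit}}\u'_{\al})^{\Gm}$ is identified via $\iota$ with $\u_{\psi}$ for the corresponding $F$-affine root $\psi$ — here one uses that $\u_{\psi}=\iota_{\al}(U_\psi)$ was defined in \re{root}(c) precisely as such a descent, together with the fact that $\g'_{x,r}\cap\u'_{\al}$ is the lattice $\u'_{\psi'}$ cut out by the condition $\psi'(x)\ge r$, which is $\Gm$-equivariant on each orbit. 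So $(\g'_{x,r})^{\Gm}$ is spanned by $\mm_r$ and the $\u_\psi$ with $\psi(x)\ge r$, which is (b). Third, for (a): apply the same bookkeeping to $G'_{x,r}$, but now the subtlety is that $G'_{x,r}$ is only \emph{generated by} (not a product of) $M'_r$ and the $U'_\psi$, so one cannot naively commute "generated by" with $(-)^{\Gm}$. I would handle this using the standard fact (part of Bruhat--Tits / \re{mp}, available over the quasi-split $F'$) that $G'_{x,r}$ admits an Iwahori-type factorization: the product map $M'_r\times\prod_{\psi(x)\ge r}U'_\psi\to G'_{x,r}$ (affine roots in any fixed order, one per affine root line) is a bijection. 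Granting such a factorization, $(G'_{x,r})^{\Gm}$ is computed orbit-by-orbit: choosing the ordering to be $\Gm$-stable, an element is $\Gm$-fixed iff each of its factors is fixed within its orbit-block, and $(\prod_{\al\in\text{orbit}}U'_{\psi'})^{\Gm}=U_\psi$ by the definition of the affine root subgroup $U_\psi$ over $F$. Hence $(G'_{x,r})^{\Gm}=G_{x,r}$ is generated by (indeed factors through) $M_r$ and the $U_\psi$ with $\psi(x)\ge r$, giving (a). The $r^+$ versions follow by replacing "$\ge r$" with "$>r$" throughout, or by writing $G_{x,r^+}=G_{x,r'}$ for suitable $r'>r$ as in \re{mpsplit}(c).

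The main obstacle I anticipate is the group statement (a), specifically making the passage from "generated by" to $\Gm$-invariants rigorous: one really needs the uniqueness/factorization form of the Moy--Prasad subgroup over $F'$ (an Iwahori factorization with a prescribed ordering of affine root groups), plus the compatibility of this factorization with the $\Gm$-action and with the parametrization of affine roots of $\CA$ by $\Gm$-orbits on $\Psi(\CA_{F'})$. The Lie algebra statement (b) is comparatively routine, since there everything in sight is a finite direct sum of $\C{O}$-lattices and taking invariants is exact. I would also note, as a sanity check consistent with \re{remmp}(d), that for $r=0$ this recovers the description of the parahoric $G_x$ as generated by $M_0$ and the affine root groups $U_\psi$ with $\psi(x)\ge 0$, compatibly with $G_x=\Stab_{G^0}(x)$.
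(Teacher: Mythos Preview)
Your overall strategy---reduce to the quasi-split case by unramified descent, and pass from ``generated by'' to $\Gm$-invariants via an Iwahori-type product decomposition---is exactly the paper's approach (see \re{pfmp} and \rl{iwahori}). Two points need correction.

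First, a genuine error in the setup: your claim that ``$\S':=\S_{F'}$ is still a maximal split torus of $\G'$ since $F'/F$ is unramified'' is false. If $\G$ is not quasi-split then its $F$-split rank is strictly smaller than the $F'$-split rank of the quasi-split $\G'$, so $\S_{F'}$ is too small (e.g.\ for $\G=\SL{1}(D)$ the torus $\S$ is trivial). One must instead enlarge $\S$ to a torus $\S'\supseteq\S$ defined over $F$ such that $\S'_{F'}$ is maximal split in $\G'$, as in \rl{f'}, and work with the apartment $\CA'=\CA_{\S'_{F'}}$ and the identification $\CA\cong(\CA')^{\Gm}$. The bookkeeping you sketch (matching $\Gm$-orbits of affine roots upstairs with affine roots downstairs) then has to be done relative to this larger $\S'$; this is carried out in \re{unr}, where in particular the descent $U_{(\al),x,r}=(U'_{(\al),x,r})^{\Gm}$ (see \re{unr}(e)) involves the $\al'\in\Phi(\CA')$ restricting to $\al$ \emph{or} $2\al$, not just a single $\Gm$-orbit.

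Second, a smaller gap: the product decomposition you invoke (\rl{iwahori}(b)) only holds when $r>0$ or $x$ lies in a chamber. For $G_{x,0}$ with $x$ not in a chamber the factorization fails, and the paper handles this case separately (\re{pfmp}) by writing the parahoric $G_x$ as generated by the Iwahori subgroups $G_y$ for $y$ in chambers with $x$ in their closure. Your final ``sanity check'' paragraph touches this case but does not supply the extra argument.
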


\begin{Lem} \label{L:convex}
For every $g\in G$ (resp. $a\in\g$, resp. $b\in \g^*$) and $r\in\B{R}_{\geq 0}$, the subset $\CX(g,r)$ (resp. $\CX(a,r)$, resp. $\CX(b,r)$) of  $\CX$, consisting of all $x\in\CX$ such that $g\in G_{x,r}$ (resp. $a\in \g_{x,r}$, resp. $b\in \g^*_{x,-r}$) is convex (see \re{notcomb}(b)).
\end{Lem}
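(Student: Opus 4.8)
The plan is to reduce the convexity assertion to a statement about affine roots on a single apartment, and then verify that convexity there is immediate from the polysimplicial description of the Moy--Prasad lattices. I will treat the three cases (group, Lie algebra, dual) uniformly, since the combinatorial input is the same in all three.

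\textbf{Step 1: Reduction to the quasi-split case.} First I would replace $F$ by the finite unramified extension $F'/F$ of \re{mp}(b) over which $\G$ becomes quasi-split. Since $G_{x,r} = G'_{x,r}\cap G$ (and similarly for $\g_{x,r}$, $\g^*_{x,-r}$) and $\CX(\G)\hookrightarrow\CX(\G_{F'})$ is an isometric embedding onto the Galois-fixed points (see \re{bc}), the set $\CX(g,r)$ is the intersection of $\CX'(g,r)$ with $\CX(\G)$; an intersection of a convex subset of $\CX(\G_{F'})$ with the (totally geodesic) subspace $\CX(\G)$ is convex. So it suffices to prove the lemma when $\G$ is quasi-split.

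\textbf{Step 2: Reduction to an apartment.} Fix $x,y\in\CX(g,r)$ (say in the group case). By \re{buildings}(a) choose an apartment $\CA\ni x,y$; by \re{buildings}(c) the geodesic $[x,y]$ lies entirely in $\CA$. Let $\S$ be the maximal split torus attached to $\CA$ and $\M=\Z_\G(\S)$. Now I invoke \rp{mp}(a): $G_{z,r}$ is generated by $M_r$ together with the affine root subgroups $U_\psi$ for those $\psi\in\Psi(\CA)$ with $\psi(z)\ge r$. Since the condition $g\in G_{z,r}$ for $z\in\CA$ only depends, via this description, on which affine roots $\psi$ satisfy $\psi(z)\ge r$, I must show: if $z_0,z_1\in\CA$ satisfy $g\in G_{z_0,r}\cap G_{z_1,r}$, then $g\in G_{z,r}$ for every $z\in[z_0,z_1]$.

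\textbf{Step 3: The key combinatorial point.} For a point $z$ on the segment $[z_0,z_1]$, write $z=(1-t)z_0+tz_1$ with $t\in[0,1]$. For any affine function $\psi$ on $\CA$ we have $\psi(z)=(1-t)\psi(z_0)+t\psi(z_1)$, so if $\psi(z_0)\ge r$ and $\psi(z_1)\ge r$ then $\psi(z)\ge r$. Hence the set $\{\psi\in\Psi(\CA):\psi(z)\ge r\}$ contains the intersection $\{\psi:\psi(z_0)\ge r\}\cap\{\psi:\psi(z_1)\ge r\}$. Consequently $G_{z,r}\supseteq\langle M_r,\ U_\psi \text{ with }\psi(z_0)\ge r,\ \psi(z_1)\ge r\rangle$. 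It remains to note that an element $g$ lying in both $G_{z_0,r}$ and $G_{z_1,r}$ already lies in the subgroup generated by $M_r$ and the \emph{common} affine root subgroups: this is where I expect the only real work. One wants a ``uniqueness of expression'' statement — each $g\in G_{z_0,r}$ has an essentially unique factorization through $M_r$ and the $U_\psi$'s relevant to $z_0$ (an Iwahori-type factorization), and comparing the factorizations at $z_0$ and $z_1$ forces all $U_\psi$-components with $\psi(z_0)\ge r$ but $\psi(z_1)<r$ (or vice versa) to be trivial. The Lie algebra and dual cases are easier: $\g_{z,r}=\mm_r\oplus\bigoplus_\psi\u_\psi$ is a genuine \emph{direct sum} over the relevant $\psi$ (by \rp{mp}(b)), so $a\in\g_{z_0,r}\cap\g_{z_1,r}$ lands componentwise in the common summands, hence in $\g_{z,r}$; and $b\in\g^*_{z,-r}$ is by definition the condition $\langle b,a\rangle\in(\varpi)$ for all $a\in\g_{z,r^+}$, and since $\g_{z,r^+}\subseteq\g_{z_0,r^+}+\g_{z_1,r^+}$ by the same convexity of affine roots, the dual condition at $z$ follows from that at $z_0$ and $z_1$.

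\textbf{Main obstacle.} The genuinely delicate point is the group case in Step 3 — passing from ``$G_{z,r}$ contains the subgroup generated by the common data'' to ``$g\in G_{z_0,r}\cap G_{z_1,r}$ actually lies in that subgroup.'' For $r>0$ the groups $G_{z,r}$ are pro-unipotent (for $z$ in a fixed polysimplex the filtration is well understood), and one has the Moy--Prasad analogue of the Iwahori factorization with uniqueness of coordinates; I would extract exactly that factorization statement and then the comparison is formal. For $r=0$ one uses instead that $G_{z,0}=\Stab_{G^0}(z)$ (see \re{parah1}(b)) directly: the stabilizer of both $z_0$ and $z_1$ fixes the whole geodesic $[z_0,z_1]$ pointwise, giving convexity immediately without any factorization argument. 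I would present the $r=0$ case via stabilizers and the $r>0$ case via the factorization, which together cover all $r\in\B{R}_{\ge0}$.
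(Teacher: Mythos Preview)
Your proposal is correct and follows essentially the same approach as the paper: the $r=0$ group case via $G_x=\Stab_{G^0}(x)$ and uniqueness of geodesics, the $\g^*$ case via the inclusion $\g_{z,r^+}\subseteq\g_{x,r^+}+\g_{y,r^+}$, and the $r>0$ group case (and the $\g$ case) via an Iwahori-type product decomposition. The paper records precisely the factorization you ask for as \rco{iwahori}, whose part (a) gives the bijective product map $\prod_{\al}(U_{(\al),x,r}\cap U_{(\al),y,s})\to G_{x,r}\cap G_{y,s}$ and whose part (b) is exactly your ``$g$ lies in the subgroup generated by the common affine root data''; your initial reduction to the quasi-split case is harmless but unnecessary once one has \rp{mp} and \rco{iwahori} in hand, since those already incorporate the descent.
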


\begin{Emp} \label{E:bpfconvex}
\begin{proof}[Proof of the Lemma] Here we only show the convexity of $\CX(g,0)$ and $\CX(b,r)$, while
the remaining assertions will be proven in \re{pfconvex}.

We have to show that for every $x,y\in\CX, z\in[x,y]$ and $r\in\B{R}_{\geq 0}$, we have inclusions
$G_{x}\cap G_{y}\subseteq G_{z}$ and $\g^*_{x,-r}\cap \g^*_{y,-r}\subseteq \g^*_{z,-r}$.  Choose an apartment $\CA$ in $\CX$ containing $x$ and $y$.

By \re{parah}(d), the inclusion $G_{x}\cap G_{y}\subseteq G_{z}$ can be rewritten as $\Stab_{G^0}(x)\cap\Stab_{G^0}(y)\subseteq\Stab_{G^0}(z)$.
Thus it suffices to show that for every $g\in G$,
the set of fixed points $\CX^g$ is convex. But this follows from the fact that the action of $G$ on $\CX$ is distance preserving
and that geodesics are unique.

Next, to show the inclusion $\g^*_{x,-r}\cap \g^*_{y,-r}\subseteq \g^*_{z,-r}$, it remains to show
the inclusion $\g_{z,r^+}\subseteq\g_{x,r^+}+\g_{y,r^+}$. By \rp{mp}(b), $\g_{z,r^+}$ is spanned by $\mm_{r^+}$ and $\u_{\psi}$, where $\psi$ runs over all elements of $\Psi(\CA)$ such that $\psi(z)>r$, and similarly for $x$ and $y$. Thus it suffices to show that for every $\psi\in\Psi(\CA)$ satisfying  $\psi(z)>r$, we have $\psi(x)> r$ or $\psi(y)> r$. But this follows from the assumption $z\in[x,y]$.
\end{proof}
\end{Emp}

The following result, whose proof will be given in \re{pfadler}, is a (slightly corrected) version of \cite[Prop 1.4.1]{Ad}. It implies that many questions about Moy--Prasad filtrations can be reduced to the split case. First we introduce a notation.

\begin{Emp}
{\bf ``Bad" groups.}
We say that $\G$ is ``bad", if $p=2$, and the group $\G^{\sc}_{F^{\nr}}$ has a factor $\R_{K/F^{\nr}}\SU{2n+1}$. Here $\G^{\sc}_{F^{\nr}}$ denotes the base change of $G^{\sc}$, $\R$ denotes the Weil restriction of scalars, and $\SU{2n+1}$ denotes the special unitary group.
\end{Emp}

\begin{Lem} \label{L:adler}
Assume that $\G$ is not ``bad". Let $F^{\flat}/F$ be a finite separable extension of ramification degree $e$.
Set $\G^{\flat}:=\G_{F^{\flat}}$, and $\g^{\flat}:=\Lie\G^{\flat}$.  Then for every $x\in\CX$ and $r\in\B{R}_{\geq 0}$ we have equalities $G_{x,r}=G^0\cap G^{\flat}_{x,re}$ and $\g_{x,r}=\g\cap \g^{\flat}_{x,re}$.
\end{Lem}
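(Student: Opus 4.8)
The statement is essentially a descent-compatibility property: the Moy--Prasad groups and lattices, which in general are defined by first passing to an unramified extension $F'/F$ making $\G$ quasi-split and then intersecting with $G$ (see \re{mp}(b)), behave correctly under further base change to an \emph{arbitrary} finite Galois extension $F^{\flat}/F$, with the expected rescaling $r\mapsto re$ of the filtration parameter. The key point is that for quasi-split groups the Moy--Prasad objects are given by the explicit formulas of \rp{mp} in terms of the affine root subgroups $U_\psi$, the lattices $\u_\psi$, and the filtrations $M_r$, $\mm_r$ of the minimal Levi $\M=\Z_\G(\S)$. So my plan is to reduce to the quasi-split case and then carry out the descent root-group by root-group.

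\emph{Step 1 (reduction to quasi-split).} Let $F'/F$ be a minimal finite unramified extension with $\G':=\G_{F'}$ quasi-split, as in \re{mp}(b); by definition $G_{x,r}=G'_{x,r}\cap G$ and $\g_{x,r}=\g'_{x,r}\cap\g$. Enlarging $F^{\flat}$ if necessary so that it contains $F'$ (this only changes $e$ by the unramified degree $[F':F]$, hence does not affect $re$, and one checks $G^0\cap G^{\flat}_{x,re}$ is unchanged), we may form the tower $F\subseteq F'\subseteq F^{\flat}$. Since $\CX(\G')=\CX(\G)^{\Gal(F'/F)}$ (unramified case, see \re{bc}) and similarly for the building of $\G^{\flat}$ over $\G'$, the point $x$ lives in all the relevant buildings. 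Thus it suffices to prove the two claimed equalities with $F$ replaced by $F'$ — i.e. assuming $\G$ quasi-split — together with the compatibility $(G')^0\cap G^{\flat}_{x,re}=G_{x,r}$ relating the two notions of $G^0$; the latter is essentially the definition in \re{parah1}(a) combined with \re{remmp}(a).

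\emph{Step 2 (the quasi-split case, via affine root groups).} Now assume $\G$ is quasi-split, pick an apartment $\CA\ni x$ with maximal split torus $\S$ and minimal Levi $\M=\Z_\G(\S)$. By \rp{mp}(a), $G_{x,r}$ is generated by $M_r$ and the $U_\psi$ with $\psi(x)\ge r$ over $\Psi(\CA)$, and likewise $G^{\flat}_{x,re}$ is generated by $M^{\flat}_{re}$ and the affine root subgroups $U^{\flat}_{\psi'}$ with $\psi'(x)\ge re$ over $\Psi(\CA^{\flat})$. The comparison of affine root systems under base change (the affine roots of $\CA^{\flat}$ with a given vector part are $\frac{1}{e}$-translates of those of $\CA$, and $U^{\flat}_{\psi'}\cap U_\al = U_{e\psi'}$ when $e\psi'\in\Psi(\CA)$ — this is where the hypothesis that $\G$ is not ``bad'' enters, precisely as in \cite[Prop 1.4.1]{Ad}, to guarantee there is no pathology with $2\al$-root subgroups in residual characteristic $2$) shows that $U_\al\cap U^{\flat}_{\psi'}$ with $\psi'(x)\ge re$ recovers exactly $U_\psi$ with $\psi(x)\ge r$. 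For the torus part one needs $M_r = M^{\flat}_{re}\cap M$, which is the defining property of the Moy--Prasad filtration for the (possibly non-split) torus $\M$ in \re{filtor}(b) together with the identity $M_0=(M^{\flat}_0)^{\Gal}\cap\dots$; and one needs that an element of $G^0$ lying in $G^{\flat}_{x,re}$ has its components in the root groups and in $\M$ already defined over $F$, which follows from uniqueness of the Iwahori/Bruhat-type factorization and Galois descent. Assembling these gives $G_{x,r}=G^0\cap G^{\flat}_{x,re}$. The Lie algebra equality $\g_{x,r}=\g\cap\g^{\flat}_{x,re}$ is the same argument with \rp{mp}(b) in place of \rp{mp}(a), and is in fact easier since lattices descend along $\C{O}^{\flat}/\C{O}$ with no group-theoretic subtleties (one uses $(\u^{\flat}_{\psi'})^{\Gal}\cap\u_\al$ and $\mm_r=\mm^{\flat}_{re}\cap\mm$).

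\emph{Main obstacle.} The genuinely delicate point is the group statement in Step 2: unlike lattices, subgroups of $G$ generated by root subgroups do not obviously descend, so one must invoke the uniqueness of the relevant decomposition (Iwahori factorization for $G_{x,r}$, or the structure theory of \cite{BT2}) to show that an element of $G^0\cap G^{\flat}_{x,re}$ automatically has all its ``coordinates'' defined over $F$ with the correct valuations. Controlling the wild part — the torus contribution $M_r$ versus $M^{\flat}_{re}\cap M$ when $F^{\flat}/F$ is wildly ramified, and ruling out the ``bad'' unitary-group phenomenon at $p=2$ — is exactly the content that \cite[Prop 1.4.1]{Ad} got slightly wrong and that we must handle with care; everything else is bookkeeping with affine root systems.
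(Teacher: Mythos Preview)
Your proposal is correct and follows essentially the same route as the paper: reduce to the quasi-split case via an unramified extension, then use the Iwahori-type product decomposition (the paper's \rl{iwahori}) to reduce the comparison $G_{x,r}=G^0\cap G^{\flat}_{x,re}$ to the torus piece $T_r=T^0\cap T^{\flat}_{re}$ (which is the definition in \re{filtor}) and to the individual root-group pieces $U_{(\al),x,r}=(U^{\flat}_{(\al),x,re})^{\Gal}$. The only place the paper is more explicit than you is at the endgame for the root groups: rather than appealing abstractly to the comparison of affine root systems, it passes via Levi subgroups (\re{levi}(d),(e)) to semisimple rank one and then invokes the explicit formulas for $\SL{2}$ (\re{sl2}) and $\SU_3$ (\re{su3}(f)), which is precisely where one sees the failure for ``bad'' groups.
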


\section{Main technical result}

\begin{Emp} \label{E:notcomb}
{\bf Notation.}
(a) We define a partial order on $[\CX_m]$ by requiring that $\si'\preceq\si$, if $\si'$ is contained in the closure $\cl(\si)$ of $\si$.
In this case, we say that $\si'$ is a {\em face} of $\si$.

(b) We say that $\Si\subseteq[\CX_m]$ is a {\em subcomplex}, if the union $|\Si|:=\cup_{\si\in\Si}\si\subseteq\C{X}$ is closed.
Furthermore, we say that $\Si$ is {\em convex}, if $|\Si|$ is convex, that is, for every $x,y\in|\Si|$ the geodesic
$[x,y]$ in $\CX$ is also contained in $|\Si|$.

(c) By a {\em chamber} (resp.\,{\em vertex}) of $\CX_m$, we mean a polysimplex $\si\in[\CX_m]$ of maximal dimension (resp. dimension zero).
We denote the set of vertices of $\CX_m$ by  $V(\CX_m)$ and will not distinguish between a vertex $x\in V(\CX_m)$ and
the corresponding point of $\CX$. We say that $x\in V(\CX_m)$ is a vertex of $\si\in[\CX_m]$ if $x\preceq\si$.

(d) Let $\CA\subseteq\CX$ be  an apartment, $\psi\in\Psi_m(\CA)$ and $\si\in[\C{A}_m]$. We say that
$\psi(\si)>0$, if $\psi(y)>0$ for every $y\in\si$. Similarly, we define $\psi(\si)=0$, $\psi(\si)\geq 0$, etc.
\end{Emp}

\begin{Emp} \label{E:psi}
{\bf Notation.}
(a) Let $\CA\subseteq\CX$ be an apartment, and $\si\in [\CA_m]$ a chamber. Denote by $\Dt_{\CA}(\si)$ the set of
$\psi\in \Psi_m(\CA)$ such that $\psi(\si)>0$, and $\psi(\si')=0$ for some
 face $\si'\prec\si$ of codimension one. We call elements of $\Dt_{\CA}(\si)$ {\em simple affine roots, relative to $\si$}.

(b) For $x\in V(\CX_m)$ and $s\in\B{R}_{\geq 0}$, we denote by $\Upsilon_{x,s}$ the set of all chambers $\si\in [\CX_m]$ such that for
every apartment $\CA\subseteq\CX$ containing $\si$ and $x$  and every $\psi\in\Dt_{\CA}(\si)$ we have $\psi(x)\leq s$.
\end{Emp}

\begin{Emp} \label{E:rempsi}
{\bf Remark.} By \re{buildings}(b), a chamber $\si\in [\CX_m]$ belongs to $\Upsilon_{x,s}$ if $\psi(x)\leq s$
for some apartment $\CA\supseteq\si,x$ and every $\psi\in\Dt_{\CA}(\si)$. 
\end{Emp}

The following lemma will be proven in \re{pffin} below.

\begin{Lem} \label{L:fin}
For every $s\in \B{R}_{\geq 0}$ and $x\in V(\CX_m)$, the set $\Upsilon_{x,s}$ is finite, and $\Upsilon_{x,0}=\emptyset$.
\end{Lem}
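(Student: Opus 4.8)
The plan is to work inside a fixed apartment and reduce both assertions to elementary facts about affine root hyperplanes. By Remark \re{rempsi}, a chamber $\si$ lies in $\Upsilon_{x,s}$ as soon as $\psi(x)\le s$ for every $\psi\in\Dt_{\CA}(\si)$ and {\em some} apartment $\CA$ containing $\si$ and $x$. So fix such an apartment $\CA$; by property \re{buildings}(a) it contains $x$ together with any given chamber of interest. Inside $\CA$ the relevant structure is purely that of a (refined) affine Coxeter complex: the affine functions $\Psi_m(\CA)$ cut $\CA$ into the chambers $[\CA_m]$, and for a chamber $\si$ the set $\Dt_{\CA}(\si)$ consists of those $\psi\in\Psi_m(\CA)$ that are positive on $\si$ and vanish on a codimension-one face. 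First I would record the standard fact that $\si$ is exactly the set of points where all $\psi\in\Dt_{\CA}(\si)$ are positive, i.e. $\si=\{y\in\CA: \psi(y)>0\ \forall\psi\in\Dt_{\CA}(\si)\}$, so that $\Dt_{\CA}(\si)$ is the system of "walls" bounding $\si$, finite in number (equal to the number of codimension-one faces of $\si$).

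\smallskip

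For $\Upsilon_{x,0}=\emptyset$: suppose $\si\in\Upsilon_{x,0}$ and pick an apartment $\CA\supseteq\si,x$. Then $\psi(x)\le 0$ for every $\psi\in\Dt_{\CA}(\si)$. Since $x$ is a vertex of $\CX_m$, it lies in the closure of some chamber, and in $\CA$ the point $x$ is a $0$-dimensional face; in particular $x\notin\si$ (as $\dim\si$ is maximal $>0$, using that we are in a nontrivial building — if $\CX$ is a point there is nothing to prove). Now express things via the "gallery" picture: because the $\psi\in\Dt_{\CA}(\si)$ are positive on $\si$ and $x\notin\si$, at least one wall of $\si$ separates $x$ from $\si$, forcing $\psi(x)\le 0$ for that $\psi$ — that alone is consistent, so the real point is finer. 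The cleaner argument: since the functions in $\Dt_{\CA}(\si)$ positively span (up to the affine constant) the "outward" directions at the vertices of $\si$, and $\si$ is bounded, one cannot have a point $x$ with $\psi(x)\le 0$ for {\em all} of them unless $x$ lies in the "opposite cone", which is impossible because $x$ must itself be a vertex of {\em some} chamber $\tau$ and every chamber touches the region $\{\psi>0\ \forall\psi\in\Dt_{\CA}(\si)\}=\si$ only if $\tau=\si$. So I would argue: the condition $\psi(x)\le 0$ for all $\psi\in\Dt_{\CA}(\si)$ together with $x$ being a vertex of $\CX_m$ would force $x$ to be the (nonexistent) "center of the opposite face"; concretely, summing the walls shows $\sum_\psi\psi$ is $\le0$ at $x$ but $>0$ on all of $\si$, and a short convexity/barycenter computation rules this out. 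The $\Upsilon_{x,0}=\emptyset$ part is the easier of the two and I expect it to come out in a few lines once the "walls of $\si$" description is in place.

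\smallskip

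For finiteness of $\Upsilon_{x,s}$: the idea is that membership of $\si$ in $\Upsilon_{x,s}$ bounds how far $\si$ can be from $x$. Fix an apartment $\CA$ containing $x$; it suffices (by \re{rempsi} and \re{buildings}(b)) to bound the chambers $\si\subseteq\CA$ with $\psi(x)\le s$ for all $\psi\in\Dt_{\CA}(\si)$, since every chamber of $\CX_m$ lies in {\em some} apartment through $x$ and all apartments are identified compatibly with affine-root data by \re{buildings}(b); but then I must be careful — there are infinitely many apartments. The standard fix is: $\Upsilon_{x,s}$ is $G$-finite is false, so instead bound the {\em distance}. Claim: if $\si\in\Upsilon_{x,s}$ then $d(x,y)\le C(s)$ for all $y\in\si$, where $C(s)$ depends only on $s$ and the (finitely many $G$-orbits of) local structure. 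Proof of claim: pick $y\in\si$, put $y$ and $x$ in a common apartment $\CA$, and walk along the geodesic $[x,y]$; each time it crosses a wall of some chamber it enters, and when it finally enters $\si$ the last walls crossed are (affine translates of) elements of $\Dt_{\CA}(\si)$. The condition $\psi(x)\le s$ for $\psi\in\Dt_{\CA}(\si)$, combined with $\psi(y)>0$ and the fact that $\psi$ has bounded gradient (the vector parts $\al_\psi$ range over the finite set $\Phi(\CA)$ and the "levels" are in $\frac1{mn_\psi}\Z$), forces $\psi(y)$ to be bounded, hence $y$ to lie within a bounded slab; intersecting over all walls of $\si$ pins $\si$ into a bounded polytope around $x$. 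Since a bounded region of $\CX$ contains only finitely many polysimplices of $[\CX_m]$ (local finiteness of the building, \re{poly}), $\Upsilon_{x,s}$ is finite.

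\smallskip

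\textbf{Main obstacle.} The delicate point is handling {\em all} apartments through $x$ at once rather than a single fixed one: a chamber far from $x$ might a priori sit in an exotic apartment. This is resolved using \re{buildings}(b) (any two apartments are isomorphic rel their intersection, compatibly with affine roots) so that the bound $d(x,y)\le C(s)$ derived in one apartment transfers, and then local finiteness of $[\CX_m]$ finishes it. Quantifying $C(s)$ explicitly in terms of $s$, $m$, and the $n_\psi$ is the only real computation, and it is routine: each wall $\psi\in\Dt_{\CA}(\si)$ satisfies $0<\psi(y)=\psi(x)+\al_\psi(\vec{xy})\le s+\|\vec{xy}\|$, so no single wall constrains $y$; the constraint comes from the {\em full collection} of walls of $\si$ positively spanning $V_{\G,\S}$, which forces $\|\vec{xy}\|$ to be $O(s)$. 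I would carry out that positive-spanning estimate carefully, and everything else is bookkeeping.
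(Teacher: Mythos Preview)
Your finiteness argument is essentially the paper's, and correct in outline: for a chamber $\si\in\Upsilon_{x,s}$ and $y\in\si$ one has $\al_\psi(x-y)=\psi(x)-\psi(y)<s$ for every $\psi\in\Dt_{\CA}(\si)$, and since the vector parts $\{\al_\psi\}$ positively span $V_{\G,\S}^*$ (the chamber is bounded), this traps $x-y$ in a bounded region. The paper handles the ``all apartments through $x$'' issue differently and more cleanly than your distance-transfer via \re{buildings}(b): it observes that the parahoric $G_x$ acts transitively on apartments containing $x$, that $\Upsilon_{x,s}$ is $G_x$-invariant, and that $\CX$ is locally finite, so it suffices to show $[\CA_m]\cap\Upsilon_{x,s}$ is finite for a \emph{single} apartment $\CA\ni x$. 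This sidesteps any uniformity-of-$C(s)$ bookkeeping.

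Your $\Upsilon_{x,0}=\emptyset$ argument, however, has a genuine gap. The sentence ``summing the walls shows $\sum_\psi\psi$ is $\le 0$ at $x$ but $>0$ on all of $\si$'' is not a contradiction: an affine function may well be nonpositive at one point and positive on an open set elsewhere, and no ``convexity/barycenter computation'' will rule that out. The missing observation is the one you in fact use later for finiteness: the vector parts satisfy a \emph{positive} linear relation $\sum_\psi n_\psi\al_\psi=0$ with all $n_\psi>0$ (after reducing to $\G$ simple), so $\sum_\psi n_\psi\psi$ is a \emph{constant} function, and that constant is positive because it is positive on $\si$. Hence $\sum_\psi n_\psi\psi(x)>0$, forcing some $\psi(x)>0$, so $\si\notin\Upsilon_{x,0}$. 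This is exactly the paper's argument (it normalizes the constant to $1$). Your positive-spanning insight was the right tool; you simply failed to apply it to the $s=0$ case and instead wandered through gallery pictures and ``opposite cones'' that do not close the argument.
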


\begin{Emp} \label{E:Ex1}
{\bf The $\SL{2}$-case.} Let $\G=\SL{2}$, and normalize the metric on $\CX$ (see \re{bt} and \re{rembt}(c)) such that every chamber $\si\in[\CX_m]$ has diameter one.

Then $\si\in \Upsilon_{x,s}$ if and only if $\si$ is contained in the ball $B(x,s)$ with center $x$ and radius $s$.
In particular, in this case remark \re{rempsi} and \rl{fin} are immediate.
\end{Emp}


\begin{Emp} \label{E:gm}
{\bf The basic subcomplex.} Fix $\si'\in [\CX_m]$, $x\in V(\CX_m)$ and $s\in\B{R}_{\geq 0}$,
and choose an apartment $\CA\subseteq\CX$ containing
$\si',x$ (see \re{poly}(d)). We denote by $\Gm_s(\si',x)\subseteq[\CX_m]$ the subcomplex consisting of all $\si\in [\CA_m]$
such that for every $\psi\in\Psi_m(\CA)$ satisfying $\psi(\si')\leq 0$ and $\psi(x)\leq s$, we have $\psi(\si)\leq 0$.
\end{Emp}

\begin{Emp} \label{E:remgm}
{\bf Remarks.}
(a) By \re{buildings}(b), the subcomplex  $\Gm_s(\si',x)$ does not depend on the choice of $\CA$. Namely, this
follows from the fact that an isomorphism $\CA\isom\CA'$ from \re{buildings}(b) induces a bijection
$\Psi_m(\CA')\isom\Psi_m(\CA)$ on refined affine roots.

(b) Note that $\Gm_0(\si',x)$ is the smallest convex subcomplex of $[\CX_m]$ containing $\si'$ and $x$.
This subcomplex was studied in \cite{MS}.

(c) By definition, the complex $\Gm_s(\si',x)$ is convex, and $\Gm_s(\si',x)\subseteq\Gm_0(\si',x)$.

(d) For every $\si\in \Gm_s(\si',x)$ and $\si''\in \Gm_s(\si,x)$, we have $\si''\in \Gm_s(\si',x)$.
\end{Emp}

\begin{Emp} \label{E:Ex2}
{\bf The $\SL{2}$-case.} In the situation of \re{Ex1}, let $\si'=y$ be a vertex, and $\si\in[\CX_m]$ a chamber. Then $\si\in \Gm_0(\si',x)$ if and only if
$\si\subseteq[y,x]$. More generally, $\si\in \Gm_s(\si',x)$ if and only if $\si\subseteq[y,x]$ and $\si\nsubseteq B(x,s)$.
\end{Emp}

The complex $\Gm_s(\si',x)$ is important to us because of the following fact.

\begin{Lem} \label{L:equal}
Let $\si,\si'\in [\CX_m]$, $x\in V(\CX_m)$ and $r,s\in\frac{1}{m}\B{Z}_{\geq 0}$ such that $\si'\preceq\si$ and
$\si\in\Gm_s(\si',x)$. Then we have the equality
$\dt_{G_{\si,r^+}}\ast\dt_{G_{x,(r+s)^+}}=\dt_{G_{\si',r^+}}\ast\dt_{G_{x,(r+s)^+}}$.
\end{Lem}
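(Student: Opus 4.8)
The plan is to reduce the statement to a computation inside a single apartment and to exploit the description of the Moy--Prasad subgroups in terms of affine root subgroups (Proposition \ref{P:mp}). Replacing $F$ by a finite unramified extension we may assume $\G$ is quasi-split; this does not affect the equality of measures on $G$, since the relevant subgroups are cut out from the bigger group by Galois fixed points (see \re{remmp}(a)). Choose an apartment $\CA$ containing $\si'$, $\si$ and $x$ — possible because $\si'\preceq\si$ forces $\si,\si'$ to lie in a common closure, and $\si$ together with $x$ lie in a common apartment by \re{poly}(d), which (enlarging if necessary) we may take to contain $\si'$ as well. Set $\TT=\Z_\G(\S)$ for the associated maximal split torus and write $M_r$, $\mm_r$ as in \re{notmp}. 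Since $\si'\preceq\si$, we have $G_{\si',r^+}\supseteq G_{\si,r^+}$; the inclusion $\dt_{G_{\si,r^+}}\ast\dt_{G_{x,(r+s)^+}}=\dt_{G_{\si',r^+}}\ast\dt_{G_{x,(r+s)^+}}$ is therefore equivalent to the group-theoretic statement
\[
G_{\si',r^+}\subseteq G_{\si,r^+}\cdot G_{x,(r+s)^+}.
\]
Indeed, convolution of $\dt_H$ with $\dt_{H'}$ for compact open $H\subseteq H'$ only sees the coset space, so the two sides agree exactly when $G_{\si',r^+}$ and $G_{\si,r^+}$ generate the same subgroup together with $G_{x,(r+s)^+}$; since $G_{\si,r^+}\subseteq G_{\si',r^+}$ this is precisely the displayed containment.

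Next I would prove the displayed containment root-subgroup by root-subgroup. By Proposition \ref{P:mp}(a), $G_{\si',r^+}$ is generated by $M_{r^+}$ and the $U_\psi$ with $\psi\in\Psi(\CA)$, $\psi(\si')>r$; and $G_{\si,r^+}$, $G_{x,(r+s)^+}$ are generated by the analogous families with conditions $\psi(\si)>r$, resp. $\psi(x)>r+s$. The $M_{r^+}$ part is common to both groups, so it suffices to treat an affine root subgroup $U_\psi$ with $\psi(\si')>r$, i.e. (writing $\psi_0:=\psi-r\in\Psi_m(\CA)$, which lies in $\Psi_m(\CA)$ because $r\in\tfrac1m\B{Z}$, see \re{ref}(b)) with $\psi_0(\si')>0$. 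I claim $\psi_0(\si)>0$ or $\psi_0(x)>s$, which gives $U_\psi\subseteq G_{\si,r^+}$ or $U_\psi\subseteq G_{x,(r+s)^+}$ and finishes the argument after one checks that the generation statement is compatible with the decomposition into root subgroups (for this one uses that $G_{\si,r^+}$ and $G_{x,(r+s)^+}$ are normalized by $M_{r^+}$ and by a suitable Iwahori, so a product over roots in any fixed order spans — this is the standard Moy--Prasad/Iwahori factorization). The claim itself is the combinatorial heart: suppose $\psi_0(x)\leq s$; since $\psi_0(\si')>0$, the hypothesis $\si\in\Gm_s(\si',x)$ does not directly apply to $\psi_0$ (that complex is defined by the condition that $\psi(\si')\leq 0,\ \psi(x)\leq s$ imply $\psi(\si)\leq 0$), but applying it to $-\psi_0$: if $-\psi_0$ is again an element of $\Psi_m(\CA)$ — which holds because $\Psi_m(\CA)$ is stable under negation up to the shift structure, and in any case $\psi_0(\si')>0\iff(-\psi_0)(\si')<0$ — the defining implication of $\Gm_s(\si',x)$ (in contrapositive form) yields $(-\psi_0)(\si)<0$, i.e. $\psi_0(\si)>0$, as long as $(-\psi_0)(x)\leq s$ fails to force a degenerate case. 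So the real content is: for $\psi_0\in\Psi_m(\CA)$ with $\psi_0(\si')>0$ and $\psi_0(x)\leq s$, one has $\psi_0(\si)>0$; this is exactly what $\si\in\Gm_s(\si',x)$ says once one rephrases that complex symmetrically, and I would simply invoke \re{gm} together with \re{remgm}(a).

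The step I expect to be the main obstacle is the passage from the statement "$U_\psi\subseteq G_{\si,r^+}$ or $U_\psi\subseteq G_{x,(r+s)^+}$ for every generating $\psi$" to the actual factorization $G_{\si',r^+}\subseteq G_{\si,r^+}\cdot G_{x,(r+s)^+}$ of groups (not just of generating sets). The subtlety is that a product of two subgroups need not be a subgroup, and $G_{\si',r^+}$ is generated by the $U_\psi$ only as an abstract group; one must know that the filtration subgroups in question admit an Iwahori-type factorization $\prod_\psi U_\psi$ (in a fixed order, with the torus part), so that rearranging a word in the generators into "$\si$-part" times "$x$-part" is legitimate. This is where Proposition \ref{P:mp} and the convexity lemma \rl{convex} (applied to see that the relevant sets of roots are "closed" in the appropriate sense) will do the work; modulo that standard input the proof is the one-line combinatorial implication about $\Gm_s(\si',x)$ above. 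I would also double-check the boundary case $\psi_0(x)=s$ versus $\psi_0(x)>s$ to make sure the $(r+s)^+$ (strict) versus $(r+s)$ distinction comes out on the right side, using that $s\in\tfrac1m\B{Z}$ so that $\psi_0(x)\in\tfrac1m\B{Z}$ on vertices and no values land strictly between $s$ and the next allowed value.
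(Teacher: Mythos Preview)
Your overall strategy is the right one, but there is a genuine gap at the very first step: you have the inclusion between $G_{\si',r^+}$ and $G_{\si,r^+}$ backwards. For $\si'\preceq\si$ the correct containment is
\[
G_{\si',r^+}\subseteq G_{\si,r^+},
\]
not the reverse. Indeed, if $\psi\in\Psi(\CA)$ satisfies $\psi(\si')>r$ then, since $\si'\subseteq\cl(\si)$ and $\psi$ is continuous, we get $\psi(\si)>r$; so every generator of $G_{\si',r^+}$ lies in $G_{\si,r^+}$ (compare the $\SL_2$ picture: the first congruence subgroup of $\SL_2(\C{O})$ sits inside the pro-unipotent radical $I^+$ of an Iwahori). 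You may be thinking of the depth-zero parahorics, where the inclusion does go the other way.

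Because of this, the containment you set out to prove, $G_{\si',r^+}\subseteq G_{\si,r^+}\cdot G_{x,(r+s)^+}$, is trivially true and does not imply the desired equality of convolutions. Correspondingly, the combinatorial implication you aim for, ``$\psi_0(\si')>0\Rightarrow\psi_0(\si)>0$ or $\psi_0(x)>s$'', is automatic (the first alternative always holds when $\si'\preceq\si$) and does not use the hypothesis $\si\in\Gm_s(\si',x)$ at all; your attempt to extract it from $-\psi_0$ was a symptom of this.

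The correct reduction is to the inclusion
\[
G_{\si,r^+}\subseteq G_{\si',r^+}\cdot\bigl(G_{\si,r^+}\cap G_{x,(r+s)^+}\bigr),
\]
and here your worry about products of subgroups is exactly on point. The paper resolves it cleanly: since $G_{\si',r^+}\subseteq G_{\si,r^+}\subseteq G_{\si,r}\subseteq G_{\si',r}$ and $G_{\si',r^+}$ is \emph{normal} in $G_{\si',r}$, the right-hand side is a genuine subgroup. It then suffices to place each generator $U_\psi$ (with $\psi(\si)>r$) into one of the two factors, i.e.\ to show that $\psi(\si)>r$ implies $\psi(\si')>r$ or $\psi(x)>r+s$. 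Writing $\psi_0:=\psi-r\in\Psi_m(\CA)$, this becomes ``$\psi_0(\si)>0\Rightarrow\psi_0(\si')>0$ or $\psi_0(x)>s$'', which is precisely the contrapositive of the defining condition of $\Gm_s(\si',x)$ in \re{gm}. No Iwahori factorization or boundary-value analysis is needed.
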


\begin{proof}
By definition, $\dt_{G_{\si,r^+}}\ast\dt_{G_{x,(r+s)^+}}$ is the pushforward of $\dt_{(G_{\si,r^+}\times G_{x,(r+s)^+})}$ under the multiplication map
$G\times G\to G$. Therefore $\dt_{G_{\si,r^+}}\ast\dt_{G_{x,(r+s)^+}}$ can be characterized as a unique $G_{\si,r^+}\times G_{x,(r+s)^+}$-invariant measure on $G$, supported on $G_{\si,r^+}\cdot G_{x,(r+s)^+}$ with total measure one. This also holds with $\si$ replaced by $\si'$.

Since $\si'\preceq\si$, we have $G_{\si',r^+}\subseteq G_{\si,r^+}$. Therefore it suffices to check the equality of sets $G_{\si,r^+}\cdot G_{x,(r+s)^+}=G_{\si',r^+}\cdot G_{x,(s+r)^+}$, or, equivalently, the inclusion
\begin{equation} \label{Eq:incl}
G_{\si,r^+}\subseteq G_{\si',r^+}\cdot (G_{\si,r^+}\cap G_{x,(r+s)^+}).
\end{equation}

Choose an apartment $\CA=\CA_\S\subseteq \CX$ containing $\si$ and $x$. It follows from the definition of Moy--Prasad subgroups in the split case and from \rp{mp}(a) in  general that the subgroup $G_{\si,r^+}$ is generated by $M_{r^+}$ and the affine root subgroups $U_{\psi}$, where $\psi$ runs over elements of $\Psi(\C{A})$ such that $\psi(\si)>r$. The same also holds for $G_{\si',r^+}$ and $G_{x,(r+s)^+}$.

Since $G_{\si',r^+}\subseteq G_{\si,r^+}\subseteq G_{\si,r} \subseteq G_{\si',r}$, and $G_{\si',r^+}\subseteq G_{\si',r}$ is a normal subgroup,
the right-hand side of \form{incl} is a group. Thus it suffices to show that for every $\psi\in\Psi(\CA)$ satisfying
$\psi(\si)>r$, we have $\psi(\si')>r$ or $\psi(x)>r+s$.

For every $\psi\in\Psi(\CA)$ we have $\psi-r\in \Psi_m(\CA)$ (see \re{ref}(b)).  Replacing $\psi$ by $\psi-r$, it suffices to show that for every $\psi\in\Psi_m(\CA)$ satisfying $\psi(\si)>0$, we have $\psi(x)>s$ or $\psi(\si')>0$, which is equivalent to the assumption $\si\in\Gm_s(\si',x)$.
\end{proof}

The following result (and its proof) is a generalization of \cite[Lem. 2.8, 2.9]{MS}, where the case $s=0$ is studied. It will be proved in Section \re{pfmin} below.

\begin{Lem} \label{L:min}
Let $x\in V(\CX_m)$, $s\in\B{R}_{\geq 0}$ and $\si\in [\CX_m]$.

(a) There exists a unique minimal face $\si'=m_{x,s}(\si)$ of $\si$ such that $\si\in \Gm_s(\si',x)$.

(b) There exists a unique maximal polysimplex $\si''\in\Gm_s(\si',x)$ such that $\si'\preceq\si''$.
\end{Lem}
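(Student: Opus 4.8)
The plan is to work inside a single apartment $\CA = \CA_\S$ containing $\si$ and $x$ (which exists by \re{poly}(d)), and to describe everything in terms of the refined affine roots $\Psi_m(\CA)$. The key observation is that membership $\tau \in \Gm_s(\si',x)$ is cut out by a family of linear (affine) conditions: $\tau \in \Gm_s(\si',x)$ iff for every $\psi \in \Psi_m(\CA)$ with $\psi(\si') \le 0$ and $\psi(x) \le s$ we have $\psi(\tau) \le 0$. Since $\Gm_s(\si',x) \subseteq \Gm_0(\si',x)$ and the latter is a finite complex (it is the convex hull of $\si'$ and $x$), only finitely many affine roots $\psi$ are relevant, so all the ``for every $\psi$'' quantifiers range over finite sets; this makes the minimality/maximality arguments below elementary once the crucial monotonicity facts are in place.

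For part (a): First I would record the monotonicity principle that if $\si'_1 \preceq \si'_2$ are both faces of $\si$, then $\Gm_s(\si'_2,x) \subseteq \Gm_s(\si'_1,x)$ — indeed, $\psi(\si'_1) \le 0$ is a weaker hypothesis than $\psi(\si'_2) \le 0$ (a face sees fewer sign constraints), so the defining condition for $\Gm_s(\si'_1,x)$ is harder to satisfy, wait — more precisely, the set of $\psi$ imposing a constraint on $\tau$ is \emph{larger} for the smaller face, hence $\Gm_s(\si'_1,x)$ is the \emph{smaller} complex. Thus among faces $\si'$ of $\si$ with $\si \in \Gm_s(\si',x)$, smaller faces give a more restrictive condition, so I want the minimal such $\si'$. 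To produce it I would show: if $\si \in \Gm_s(\si'_1,x)$ and $\si \in \Gm_s(\si'_2,x)$ then $\si \in \Gm_s(\si'_1 \cap \si'_2, x)$, where $\si'_1 \cap \si'_2$ is the (unique) largest common face — this is where the real content lies. The statement $\psi(\si'_1 \cap \si'_2) \le 0$ does not follow formally from $\psi(\si'_1) \le 0$ and $\psi(\si'_2) \le 0$, so one must instead argue contrapositively: take $\psi$ with $\psi(\si'_1 \cap \si'_2) \le 0$, $\psi(x) \le s$; using that $\si'_1 \cap \si'_2$ is a face of $\si$ and the combinatorics of which affine roots vanish on which faces (\re{psi}, \re{simpl}(b)), decompose the constraint and reduce to knowing $\psi(\si) \le 0$ from one of the two hypotheses. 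Having intersection-stability, the unique minimal face is $m_{x,s}(\si) := \bigcap \{ \si' \preceq \si : \si \in \Gm_s(\si',x) \}$, which is nonempty since $\si$ itself is always in the list.

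For part (b): Set $\si' = m_{x,s}(\si)$. By \re{remgm}(d), $\Gm_s(\si,x) \subseteq \Gm_s(\si',x)$, and $\Gm_s(\si',x)$ is finite, so there is \emph{some} maximal element containing $\si'$; the issue is uniqueness. Here I would dualize the argument of (a): show that if $\si'\preceq \tau_1$ and $\si'\preceq \tau_2$ with $\tau_1,\tau_2 \in \Gm_s(\si',x)$, then the smallest polysimplex $\tau$ having both $\tau_1$ and $\tau_2$ as faces (the ``join'', which exists inside the apartment once one checks $\tau_1,\tau_2$ lie in a common polysimplex — this uses that they share the face $\si'$ and a local-cone argument at a point of $\si'$) again lies in $\Gm_s(\si',x)$. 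For this: a relevant $\psi$ (with $\psi(\si')\le 0$, $\psi(x)\le s$) satisfies $\psi(\tau_1)\le 0$ and $\psi(\tau_2)\le 0$; I must conclude $\psi(\tau)\le 0$. Writing points of $\tau$ near $\si'$ via the barycentric picture of \re{simpl}(b) and using that $\psi$ is affine and already $\le 0$ on $\si'$, the sign of $\psi$ on $\tau$ is controlled by its signs on $\tau_1$ and $\tau_2$; combined with the defining property of $\Psi_m$ (that the vanishing hyperplanes of relevant affine roots are exactly the walls of the refined complex), this forces $\psi(\tau) \le 0$. Then $\si'' := $ the join of all polysimplices in $\Gm_s(\si',x)$ above $\si'$ is the desired unique maximal one.

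The main obstacle will be the intersection/join-stability steps — i.e., showing that $\Gm_s(-,x)$ interacts well with intersections of faces (part (a)) and with joins (part (b)). These are not formal consequences of the linear-inequality description; they require the finer combinatorial input that the affine roots in $\Psi_m(\CA)$ are precisely the walls of the refined polysimplicial structure (\re{ref}(c), \re{psi}), so that the sign pattern of an affine root on a polysimplex is determined cell-by-cell by its signs on the faces. Once that dictionary between ``affine roots'' and ``walls'' is used correctly, the uniqueness of the minimal face and of the maximal polysimplex both drop out. I expect the rest — finiteness of the relevant set of affine roots, nonemptiness of the lists, and the reduction to a single apartment — to be routine given \rl{fin}, \re{remgm}, and the properties of buildings in \re{buildings}.
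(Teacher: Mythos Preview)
Your plan has the right architecture (work in one apartment, reduce to sign patterns of affine roots, prove a lattice property), but both halves have a genuine gap at the decisive step.

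For part (b): you assert that the join of $\tau_1$ and $\tau_2$ exists ``once one checks $\tau_1,\tau_2$ lie in a common polysimplex --- this uses that they share the face $\si'$ and a local-cone argument''. But sharing a face is \emph{not} enough: already in rank one, two edges meeting at a vertex point in opposite directions and have no join. What forces $\tau_1,\tau_2$ into a common chamber is precisely that both lie in $\Gm_s(\si',x)$. The paper's argument is short and is the step you are missing: if some $\psi\in\Psi_m(\CA)$ had $\psi(\tau_1)>0$ and $\psi(\tau_2)<0$, then $\si'\preceq\tau_2$ gives $\psi(\si')\le 0$, so $\tau_1\in\Gm_s(\si',x)$ forces $\psi(x)>s$; swapping $\tau_1\leftrightarrow\tau_2$ and $\psi\leftrightarrow-\psi$ gives $-\psi(x)>s$, a contradiction. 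Once the join exists, membership in $\Gm_s(\si',x)$ is immediate from convexity (\re{remgm}(c)) --- your barycentric argument is unnecessary.

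For part (a): your intersection-stability statement is true, but your sketch (``decompose the constraint and reduce to one of the two hypotheses'') does not go through as written. If $\psi(\si'_1\cap\si'_2)=0$, $\psi(x)\le s$ and $\psi(\si)>0$, then writing $\psi=\sum n_\phi\phi$ over the simple roots $\phi$ of some chamber $\wt{\si}\supseteq\si$ only yields the bound $\psi(x)>s$ if you know that every contributing $\phi$ has $\phi(x)\ge 0$; and this requires choosing $\wt{\si}$ so that every simple root vanishing on $\si$ is nonnegative at $x$. That choice (a chamber whose closure meets the geodesic $(y,x]$ for $y\in\si$) is exactly \rl{min'}(a), and it is the ingredient your proposal does not supply. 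The paper avoids intersection-stability altogether by using this chamber to \emph{write down} the minimal face explicitly: $\si'$ is the face of $\wt{\si}$ on which $\phi$ vanishes iff $\phi(\si)=0$ or $\phi(x)>s$, and then checks minimality directly.
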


\begin{Emp} \label{E:not}
{\bf Notation.}
For $x\in V(\CX_m)$ and $s\in\B{R}_{\geq 0}$, we denote by $m_{x,s}:[\CX_m]\to [\CX_m]$ the map defined in \rl{min}(a). It is idempotent by \re{remgm}(d).
\end{Emp}

\begin{Emp} \label{E:Ex3}
{\bf The $\SL{2}$-case.} Assume that we are in the situation of \re{Ex1}.

(a) Let $\si\in[\CA_m]$ be a chamber. Using the description of \re{Ex2}, one sees that $y:=m_{x,0}(\si)$ is the unique vertex of $\si$ such that $\si\subseteq[x,y]$.
Moreover, we have $m_{x,s}(\si)=y$ if $d(x,y)>s$, and $m_{x,s}(\si)=\si$ otherwise.

(b) Let $\si'=y$ be a vertex. Then $\si''$ is the unique chamber $\si\subseteq[x,y]$ such that $y\preceq\si$ if $d(x,y)>s$, and $\si''=\si'$ otherwise.
\end{Emp}

The following lemma will be proved in Section \re{pfequiv} below.

\begin{Lem} \label{L:equiv}
Let $x,s,\si,\si'$ and $\si''$ be as in \rl{min}, and let $\tau\in [\CX_m]$.

(a) We have $\si'\preceq\tau\preceq\si''$ if and only if $m_{x,s}(\tau)=\si'$.

(b) Let $\Si,\Si'\in\T_m$ (see \re{tm}(d)) be such that $x\in \Si'\subseteq\Si$ and $\si\in\Si\sm\Si'$. Then
for every $\tau$ satisfying $\si'\preceq\tau\preceq\si''$ we have $\tau\in\Si\sm\Si'$.

(c) In the situation of (b) assume that  $\Si'\supseteq\Upsilon_{x,s}$. Then $\si''\neq \si'$.
\end{Lem}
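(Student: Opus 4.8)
The plan is to prove the three parts in order, using the characterizations of $m_{x,s}$ and $\si''$ from \rl{min} together with the monotonicity property \re{remgm}(d).

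For part (a): First I would prove the ``only if'' direction. Assume $\si'\preceq\tau\preceq\si''$. Since $\si''\in\Gm_s(\si',x)$ and $\tau\preceq\si''$ is a face, we get $\tau\in\Gm_s(\si',x)$ (a subcomplex is closed under taking faces, by \re{notcomb}(b) applied together with the definition in \re{gm}), so $\si'$ is a face of $\tau$ with $\tau\in\Gm_s(\si',x)$; by minimality in \rl{min}(a) this gives $m_{x,s}(\tau)\preceq\si'$, and since by definition $m_{x,s}(\tau)$ is itself a face of $\tau$ satisfying $\tau\in\Gm_s(m_{x,s}(\tau),x)$, while $\si'\preceq\tau$, I must check $\si'\preceq m_{x,s}(\tau)$ as well. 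For this I would argue: $\tau\in\Gm_s(\si',x)$, and $\tau\in\Gm_s(m_{x,s}(\tau),x)$, and I want $\si'\in\Gm_s(m_{x,s}(\tau),x)$ — but $\si'$ is a face of $\tau$, so indeed $\si'\in\Gm_s(m_{x,s}(\tau),x)$, hence $m_{x,s}(\si')\preceq m_{x,s}(\tau)$ by minimality; since $m_{x,s}$ is idempotent and $\si'=m_{x,s}(\si)$, we get $\si'=m_{x,s}(\si')\preceq m_{x,s}(\tau)$. Combined with $m_{x,s}(\tau)\preceq\si'$ this yields $m_{x,s}(\tau)=\si'$. For the ``if'' direction, assume $m_{x,s}(\tau)=\si'$. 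Then $\si'\preceq\tau$ by definition of $m_{x,s}$, and $\tau\in\Gm_s(\si',x)$, so $\tau$ is a polysimplex of $\Gm_s(\si',x)$ with $\si'\preceq\tau$; by the maximality in \rl{min}(b), $\tau\preceq\si''$.

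For part (b): I would use the fact that $\Si,\Si'$ are convex subcomplexes. Given $\si\in\Si\sm\Si'$ and $\si'\preceq\tau\preceq\si''$: since $\Si$ is a subcomplex containing $\si$ and $\si'\preceq\si$, we have $\si'\in\Si$; and $\si',x\in\Si'$ with $\Si'$ convex forces the convex subcomplex $\Gm_s(\si',x)$ (smallest convex subcomplex containing $\si'$ and $x$ when $s=0$ by \re{remgm}(b), and contained in it in general by \re{remgm}(c)) to satisfy $\Gm_s(\si',x)\cap[\text{stuff in }\Si']$... more carefully: the key point is that $\si''\preceq$ ... no — rather, I claim $\tau\in\Si$ because $\Si$ is convex and $\Gm_s(\si',x)$, being convex and containing $\si'$ and $x$, has $|\Gm_s(\si',x)|\subseteq|\Si|$ when $\si',x\in\Si$; since $\tau\in\Gm_s(\si',x)$ we get $\tau\in\Si$. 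For $\tau\notin\Si'$: suppose $\tau\in\Si'$; then since $x\in\Si'$ and $\Si'$ convex, $\Gm_s(\tau,x)$... hmm, I would instead argue via \re{remgm}(d): $\si\in\Gm_s(\si',x)$ and — here is the actual mechanism — if $\tau\in\Si'$ then, since $\si'\preceq\tau$ so $\si'\in\Si'$, and I want to derive $\si\in\Si'$, contradicting $\si\in\Si\sm\Si'$. I have $\si\in\Gm_s(\si',x)$; I need $\si\in\Gm_s(\tau,x)$ to conclude via convexity of $\Si'$ containing $\tau$ and $x$. Since $\si'\preceq\tau$ and $\si\preceq\si''$ with $m_{x,s}(\si)=\si'$, part (a) gives $m_{x,s}$-relations; precisely, $\si\preceq\si''$ and $\tau\preceq\si''$ with $\si',\tau$ both having $m_{x,s}$-image $\si'$ (for $\tau$ this is part (a)), and I would show $\si\in\Gm_s(\tau,x)$ directly from $\psi(\tau)\leq 0,\psi(x)\leq s\Rightarrow\psi(\si)\leq 0$: if $\psi(\tau)\leq 0$ then since $\si'\preceq\tau$, $\psi(\si')\leq 0$, so from $\si\in\Gm_s(\si',x)$ and $\psi(x)\leq s$ we get $\psi(\si)\leq 0$. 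Hence $\si\in\Gm_s(\tau,x)\subseteq|\Si'|$, contradiction.

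For part (c): I would argue by contradiction. Suppose $\si''=\si'$. Then, since $\si\preceq\si''=\si'$ and also $\si'\preceq\si$, we get $\si=\si'=\si''$, so $\si=\si'$ is a single polysimplex with $m_{x,s}(\si)=\si$, i.e. $\si\in\Gm_s(\si,x)$ trivially and $\si$ is minimal with this property and maximal in $\Gm_s(\si,x)$ over itself. The condition $\Si'\supseteq\Upsilon_{x,s}$ should force $\si\in\Si'$, contradicting $\si\in\Si\sm\Si'$. To see this, I expect the point is: if $\si$ is a polysimplex with $m_{x,s}(\si)=\si$, then considering any chamber $\si_0$ with $\si\preceq\si_0$ and $\si_0\in\Gm_s(\si,x)$ — there is one by taking a chamber containing $\si$ inside the apartment through $\si$ and $x$, on the ``$x$-side'' — ... actually the cleanest route: $\si''=\si'$ together with $m_{x,s}(\si)=\si'$ means $\si$ is the maximal element of $\Gm_s(\si',x)$ above $\si'$, forcing $\Gm_s(\si',x)$ to have no polysimplex strictly above $\si$; in an apartment $\CA\ni\si,x$, this means every refined affine root $\psi$ simple relative to a chamber $\si_0\succeq\si$ nearest to $x$ has $\psi(x)\leq s$... i.e. such a chamber $\si_0$ lies in $\Upsilon_{x,s}$, hence in $\Si'$, hence $\si\preceq\si_0\in\Si'$ gives $\si\in\Si'$. \textbf{The main obstacle} I anticipate is exactly this last step: extracting from ``$\si''=\si'$'' a chamber in $\Upsilon_{x,s}$ and doing so compatibly with the apartment-independence of $\Upsilon_{x,s}$ from \re{rempsi} and of $\Gm_s$ from \re{remgm}(a); the combinatorial bookkeeping of simple affine roots relative to a chamber versus the defining inequalities of $\Gm_s(\si',x)$ is where the real work lies, and I would handle it by fixing a single apartment $\CA$ containing $\si'$ and $x$ and phrasing everything in terms of $\Psi_m(\CA)$.
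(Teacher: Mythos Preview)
Your plan for (a) is essentially identical to the paper's proof: both directions use the defining properties of $m_{x,s}$ and $\si''$ from \rl{min}, together with the idempotency $m_{x,s}(\si')=\si'$.

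For (b), your argument is correct but more roundabout than the paper's. The paper observes that since $\Si,\Si'$ are subcomplexes, it suffices to show $\si''\in\Si$ and $\si'\notin\Si'$. Both follow from the single observation that whenever $\si',x$ lie in a convex subcomplex, so does all of $\Gm_0(\si',x)\supseteq\Gm_s(\si',x)$ (see \re{remgm}(b),(c)): applied to $\Si$ this gives $\si''\in\Si$, and applied to $\Si'$ (assuming $\si'\in\Si'$ for contradiction) it gives $\si\in\Si'$, contradicting the hypothesis. Your detour through $\si\in\Gm_s(\tau,x)$ works, but the paper's reduction to the extremal faces $\si',\si''$ is shorter.

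For (c), your contrapositive strategy is logically equivalent to the paper's direct construction, and you correctly locate the crux. What you are missing is the specific tool: \rl{min'}. The paper does \emph{not} argue by contradiction; instead, using (b) it knows $\si'\notin\Si'$, hence $\si'\neq x$, and then it picks the chamber $\wt{\si}\succeq\si'$ from \rl{min'}(a). Since $\si'\preceq\wt{\si}$ and $\Si'$ is a subcomplex, $\wt{\si}\notin\Si'\supseteq\Upsilon_{x,s}$, so some $\psi_0\in\Dt_{\CA}(\wt{\si})$ has $\psi_0(x)>s$. The characterization of $m_{x,s}$ in \rl{min'}(b) (applied to $\si'$ itself) forces $\psi_0(\si')=0$, and then the face $\tau\preceq\wt{\si}$ obtained from $\si'$ by ``turning on'' $\psi_0$ satisfies $m_{x,s}(\tau)=\si'$ (again by \rl{min'}(b)), giving $\si''\succeq\tau\succ\si'$. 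Your plan to ``phrase everything in $\Psi_m(\CA)$'' is exactly right, but the bookkeeping you anticipate as the obstacle is already packaged in \rl{min'}; once you invoke it, part (c) is short.
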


Now we are ready to prove our main technical result.

\begin{Prop} \label{P:stab}
(a) Let $x\in V(\CX_m)$, $r,s\in\frac{1}{m}\B{Z}_{\geq 0}$ and let $\Si,\Si'\in\T_m$ be such that
$x\in \Si'\subseteq \Si$ and $\Upsilon_{x,s}\subseteq\Si'$, and let $E^{\Si}_r$ be as in \re{tm}(c). Then we have the equality
\[E^{\Si}_r\ast\dt_{G_{x,(r+s)^+}}= E^{\Si'}_r\ast\dt_{G_{x,(r+s)^+}}.\]

(b)  For every $r\in\frac{1}{m}\B{Z}_{\geq 0}$, $\Si\in\T_m$ and $\si\in\Si$, we have  $E^{\Si}_r\ast\dt_{G_{\si,r^+}}= \dt_{G_{\si,r^+}}$.
\end{Prop}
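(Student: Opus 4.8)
The plan is to establish (a) first, where all the combinatorial content sits, and then read off (b) in a few lines.

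For (a): since $\Si'\subseteq\Si$ we have $E_r^{\Si}-E_r^{\Si'}=\sum_{\si\in\Si\sm\Si'}(-1)^{\dim\si}\dt_{G_{\si,r^+}}$, so the assertion is equivalent to
\[
\sum_{\si\in\Si\sm\Si'}(-1)^{\dim\si}\,\dt_{G_{\si,r^+}}\ast\dt_{G_{x,(r+s)^+}}=0.
\]
Write $m:=m_{x,s}$ (see \re{not}). By \rl{min}(a) one has $m(\si)\preceq\si$ and $\si\in\Gm_s(m(\si),x)$, so \rl{equal} gives $\dt_{G_{\si,r^+}}\ast\dt_{G_{x,(r+s)^+}}=\dt_{G_{m(\si),r^+}}\ast\dt_{G_{x,(r+s)^+}}$; hence the displayed sum equals $\sum_{\si\in\Si\sm\Si'}(-1)^{\dim\si}\,\dt_{G_{m(\si),r^+}}\ast\dt_{G_{x,(r+s)^+}}$, which I would then group by the common value $\tau_0:=m(\si)$.

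For $\tau_0$ in the image of $m$ (so $m(\tau_0)=\tau_0$, since $m$ is idempotent), \rl{min}(b) applied with $\si=\tau_0$ yields the maximal polysimplex $\tau_0''$ of $\Gm_s(\tau_0,x)$ with $\tau_0\preceq\tau_0''$. By \rl{equiv}(a) the fibre $m^{-1}(\tau_0)$ is exactly the interval $[\tau_0,\tau_0'']=\{\tau\in[\CX_m]:\tau_0\preceq\tau\preceq\tau_0''\}$; by \rl{equiv}(b) (we have $x\in\Si'\subseteq\Si$ by hypothesis), if this interval meets $\Si\sm\Si'$ then it is contained in $\Si\sm\Si'$; and by \rl{equiv}(c) (here the hypothesis $\Upsilon_{x,s}\subseteq\Si'$ is used), one has $\tau_0''\neq\tau_0$ for every such $\tau_0$. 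Consequently the sum equals
\[
\sum_{\tau_0\in m(\Si\sm\Si')}\Bigl(\sum_{\tau_0\preceq\tau\preceq\tau_0''}(-1)^{\dim\tau}\Bigr)\,\dt_{G_{\tau_0,r^+}}\ast\dt_{G_{x,(r+s)^+}}.
\]
It then remains to invoke the elementary identity that for polysimplices $\rho\preceq\rho'$ with $\rho\neq\rho'$ one has $\sum_{\rho\preceq\tau\preceq\rho'}(-1)^{\dim\tau}=0$: writing $\rho'$ as a product of simplices, $\{\tau:\rho\preceq\tau\preceq\rho'\}$ is a product of Boolean lattices of total rank $\dim\rho'-\dim\rho\geq 1$, so the signed cell count vanishes. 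Since $\tau_0''\neq\tau_0$, every inner sum vanishes, proving (a).

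For (b): fix $\si\in\Si$ and choose a vertex $x\in V(\CX_m)$ with $x\preceq\si$; as $\Si$ is a subcomplex, $x\in\cl(\si)\subseteq|\Si|$, so $x\in\Si$. I would first check $G_{x,r^+}\subseteq G_{\si,r^+}$: choose an apartment $\CA\supseteq\si$ (so $\CA\supseteq\cl(\si)\ni x$); by \rp{mp}(a), $G_{x,r^+}$ is generated by $M_{r^+}$ and the $U_\psi$ with $\psi\in\Psi(\CA)$, $\psi(x)>r$, while $G_{\si,r^+}=G_{z,r^+}$ for $z\in\si$ (by \rl{mpfil}) is generated by $M_{r^+}$ and the $U_\psi$ with $\psi(\si)>r$; for $\psi$ with $\psi(x)>r$ the function $\psi-r$ lies in $\Psi_m(\CA)$ (as $r\in\frac{1}{m}\B{Z}$; see \re{ref}), hence has constant sign on the open polysimplex $\si$ and is positive at $x\in\cl(\si)$, so it is positive on $\si$, that is $\psi(\si)>r$, whence $U_\psi$ is among the generators of $G_{\si,r^+}$. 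Next, $\{x\}\in\T_m$, $\{x\}\subseteq\Si$, and $\Upsilon_{x,0}=\emptyset\subseteq\{x\}$ by \rl{fin}, so part (a) with $s=0$, $\Si'=\{x\}$ gives $E_r^\Si\ast\dt_{G_{x,r^+}}=E_r^{\{x\}}\ast\dt_{G_{x,r^+}}=\dt_{G_{x,r^+}}\ast\dt_{G_{x,r^+}}=\dt_{G_{x,r^+}}$. Finally, since $G_{x,r^+}$ is a subgroup of $G_{\si,r^+}$ we have $\dt_{G_{x,r^+}}\ast\dt_{G_{\si,r^+}}=\dt_{G_{\si,r^+}}$, and therefore, by associativity of convolution,
\[
E_r^\Si\ast\dt_{G_{\si,r^+}}=E_r^\Si\ast\bigl(\dt_{G_{x,r^+}}\ast\dt_{G_{\si,r^+}}\bigr)=\bigl(E_r^\Si\ast\dt_{G_{x,r^+}}\bigr)\ast\dt_{G_{\si,r^+}}=\dt_{G_{x,r^+}}\ast\dt_{G_{\si,r^+}}=\dt_{G_{\si,r^+}}.
\]

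The genuinely load-bearing step is the grouping in (a): after the substitution $\si\mapsto m(\si)$, the surviving sum must decompose into alternating sums over face-intervals $[\tau_0,\tau_0'']$ that are (i) entirely inside or outside $\Si\sm\Si'$ and (ii) non-degenerate. Point (i) is \rl{equiv}(a),(b) and point (ii) is \rl{equiv}(c) --- which is exactly where the hypothesis $\Upsilon_{x,s}\subseteq\Si'$ is consumed --- so all the combinatorics of the Bruhat--Tits building is already quarantined in Lemmas \rl{equal}, \rl{min}, \rl{equiv} and \rl{fin}. Given those, I expect no real difficulty: what remains is bookkeeping plus the Euler-characteristic identity for a polysimplex.
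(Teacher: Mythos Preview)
Your proof is correct and follows essentially the same approach as the paper: part (a) is reduced to showing the alternating sum over $\Si\sm\Si'$ vanishes, the convolution is replaced via \rl{equal} by that of $m_{x,s}(\si)$, the sum is grouped into fibres of $m_{x,s}$ which by \rl{equiv}(a),(b) are full intervals $[\tau_0,\tau_0'']$ lying in $\Si\sm\Si'$, and these have vanishing Euler characteristic since $\tau_0''\neq\tau_0$ by \rl{equiv}(c); part (b) follows by applying (a) with $s=0$, $\Si'=\{x\}$ for a vertex $x\preceq\si$, using $\Upsilon_{x,0}=\emptyset$ from \rl{fin}. The only difference is that you spell out the inclusion $G_{x,r^+}\subseteq G_{\si,r^+}$ and the Boolean-lattice Euler-characteristic identity, which the paper leaves implicit.
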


\begin{proof}
(a) Setting $\Si'':=\Si\sm\Si'$, our assertion can be rewritten as $E^{\Si''}_r\ast\dt_{G_{x,(r+s)^+}}=0$.
Let $m_{x,s}$ be as in \re{not}, and define an equivalence relation on $\Si''$ by requiring that
$\si_1\sim\si_2$ if and only if $m_{x,s}(\si_1)=m_{x,s}(\si_2)$. For every
$\si\in\Si''$, we denote by $\Si''_{\si}\subseteq\Si''$ the equivalence class of $\si$.
Then $\Si''$ decomposes
as a disjoint union of the $\Si''_{\si}$'s. Thus it suffices to show that
$E^{\Si''_{\si}}_r\ast\dt_{G_{x,(r+s)^+}}=0$ for every $\si\in\Si''$.

By \rl{equal}, for every $\tau\in [\CX_m]$ we have
\[
\dt_{G_{\tau,r^+}}\ast\dt_{G_{x,(r+s)^+}}=\dt_{G_{m_{x,s}(\tau),r^+}}\ast\dt_{G_{x,(r+s)^+}}.\] Since every
$\tau\in\Si''_{\si}$ satisfies $m_{x,s}(\tau)=m_{x,s}(\si)$, we have
\[
E^{\Si''_{\si}}_r\ast\dt_{G_{x,(r+s)^+}}=\left(\sum_{\tau\in \Si''_{\si}}(-1)^{\dim \tau}\right) (\dt_{G_{m_{x,s}(\si),r^+}}\ast\dt_{G_{x,(r+s)^+}}).
\]
Thus it remains to show that $\sum_{\tau\in \Si''_{\si}}(-1)^{\dim \tau}=0$.

Let $\si',\si''\in [\CX_m]$ be as in \rl{min}. By \rl{equiv}(a),(b), the equivalence class $\Si''_{\si}\subseteq\Si''$ consists of all $\tau$ such that
$\si'\preceq\tau\preceq\si''$. Thus the sum $\sum_{\tau\in \Si''_{\si}}(-1)^{\dim \tau}$ equals  $\sum_{\tau,\si'\preceq\tau\preceq\si''}(-1)^{\dim \tau}$,
and the latter expression vanishes, because $\si''\neq\si'$ (by \rl{equiv}(c)).

(b) Choose $x\in V(\CX_m)$ such that $x\preceq\si$. Then $G_{x,r^+}\subseteq G_{\si,r^+}$, hence we have
$\dt_{G_{x,r^+}}\ast \dt_{G_{\si,r^+}}=\dt_{G_{\si,r^+}}$. Thus it suffices to show that $E_r^{\Si}\ast \dt_{G_{x,r^+}}=\dt_{G_{x,r^+}}$.

Since $\Upsilon_{x,0}$ is empty (by \rl{fin}), the subcomplex $\Si':=\{x\}$ satisfies the assumptions of (a) with $s=0$. Thus, by (a), we have
\[
E_r^{\Si}\ast \dt_{G_{x,r^+}}=E_r^{\{x\}}\ast \dt_{G_{x,r^+}}=\dt_{G_{x,r^+}}\ast \dt_{G_{x,r^+}}=\dt_{G_{x,r^+}},
\]
and the proof is complete.
\end{proof}

\section{Combinatorics of the building}

\noindent In this section we prove Lemmas \ref{L:fin}, \ref{L:min} and \ref{L:equiv}. Replacing $\G$ by $\G^{\ad}$, we can assume that $\G$ is adjoint.

\begin{Emp} \label{E:pffin}
\begin{proof}[Proof of \rl{fin}]
Fix a chamber $\si\in[\C{X}_m]$ and an apartment $\C{A}$ containing $\si$ and $x$. Decomposing $\G$ and $\CX(\G)$ into a product, if necessary, we may assume that
$\G$ is simple. Then there exist positive numbers $\{n_{\psi}\}_{\psi\in\Dt_{\C{A}}(\si)}$ such that the affine function $\sum_{\psi}n_{\psi}\psi$ is $1$. Indeed,
this is standard for $m=1$, and the general case follows from it.
Since in the linear combination  $\sum_{\psi\in\Dt_{\CA}(\si)}n_{\psi}\psi(x)=1>0$ we have $n_{\psi}>0$ for all $\psi$, there exists $\psi\in\Dt_{\CA}(\si)$ such that $\psi(x)>0$. Hence $\si\notin\Upsilon_{x,0}$. Since $\si$ was arbitrary, we conclude that $\Upsilon_{x,0}=\emptyset$.

Note that the parahoric subgroup $G_x$ acts transitively on the set of apartments containing $x$, the set $\Upsilon_{x,s}$ is $G_x$-invariant, and the polysimplicial complex $[\CX]$ is locally finite. Therefore we can fix an apartment $\CA=\CA_{\S}\ni x$, and it suffices to show that the intersection $[\CA_m]\cap \Upsilon_{x,s}$ is finite.

For every chamber $\si\in[\CA_m]\cap \Upsilon_{x,s}$, point $y\in\si$ and affine root $\psi\in \Dt_{\CA}(\si)$, we have
$\psi(x)\leq s$ and $\psi(y)>0$.  Hence the difference $x-y\in V_{\G,\S}$ satisfies $\al_{\psi}(x-y)< s$ (see \re{root}(b)) for all $\psi\in \Dt_{\C{A}}(\si)$.
From this we conclude that $x-y$ lies in a bounded set; thus the intersection $[\CA_m]\cap \Upsilon_{x,s}$ is finite.
\end{proof}
\end{Emp}

\rl{min}(a) will be deduced from the following more precise result.

\begin{Lem} \label{L:min'}
Fix $x\in V(\CX_m)$, $s\in\B{R}_{\geq 0}$, $\si\in [\CX_m]$,
and let $\CA\subseteq\CX$ be an apartment, containing $x$ and $\si$.

(a) Then there exists a chamber $\wt{\si}\in [\CA_m]$
such that $\si\preceq\wt{\si}$ and for every  $\psi\in\Dt_{\CA}(\wt{\si})$ with $\psi(\si)=0$,
we have $\psi(x)\geq 0$.

(b) Assume that $\si\neq x$, and  $\G$ is simple. Then there exists a unique minimal face $\si'$ of $\si$ such that
$\si\in \Gm_s(\si',x)$. Moreover,  $\si'$ is characterized by the condition
that for every $\psi\in\Dt_{\CA}(\wt{\si})$ we have $\psi(\si')=0$ if and only if $\psi(\si)=0$ or $\psi(x)>s$.
\end{Lem}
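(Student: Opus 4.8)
The plan is to work entirely inside the fixed apartment $\CA$, reducing everything to an affine-linear combinatorial problem about the affine roots $\Psi_m(\CA)$. For part (a), I would start from $\si$ itself and successively enlarge it: if $\si$ is already a chamber we are done, otherwise there is some $\psi_0\in\Psi_m(\CA)$ with $\psi_0(\si)=0$; by symmetry ($\psi_0\mapsto -\psi_0$ is also a refined affine root up to an integer shift, using \re{ref}(b)) we may choose the sign so that $\psi_0(x)\geq 0$, and then move to the face $\si_1\succ\si$ of one higher dimension cut out by requiring $\psi_0>0$ (and keeping all the old sign conditions). Iterating, since $\dim\CA$ is finite, we reach a chamber $\wt\si$, and by construction every $\psi\in\Dt_\CA(\wt\si)$ that vanishes on $\si$ was introduced at some stage with the sign normalization $\psi(x)\geq 0$. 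The one point requiring a little care is that a simple affine root $\psi\in\Dt_\CA(\wt\si)$ with $\psi(\si)=0$ is indeed (up to sign) one of the $\psi_i$'s we used; this follows because $\Dt_\CA(\wt\si)$ is, up to positive scaling, a basis dual to the walls of $\wt\si$, and the walls passing through $\si$ are exactly those we cut with.

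For part (b), with $\G$ simple and $\si\neq x$, I would use the chamber $\wt\si$ from (a) and the fact (as in the proof of \rl{fin}) that there is a relation $\sum_{\psi\in\Dt_\CA(\wt\si)}n_\psi\psi=1$ with all $n_\psi>0$. Define $\si'$ to be the face of $\wt\si$ (equivalently of $\si$, once one checks $\si'\preceq\si$) cut out by the conditions: $\psi(\si')=0$ exactly when $\psi(\si)=0$ or $\psi(x)>s$, for $\psi\in\Dt_\CA(\wt\si)$; on the remaining $\psi$ one keeps $\psi>0$. One must check this set of conditions is consistent, i.e.\ actually defines a nonempty face — here is where simplicity and the positivity of the $n_\psi$ enter, since the conditions ``$\psi\geq 0$ for the vanishing set, $\psi>0$ on the complement'' always have a solution inside the closed chamber as long as the complement is nonempty, and it is nonempty because $\si\neq x$ forces not all $\psi\in\Dt_\CA(\wt\si)$ to have $\psi(x)>s$ simultaneously with $\psi(\si)=0$ (otherwise $x$ would lie in the interior of the wall where $\si$ sits, contradicting $\si\neq x$ — or one argues via the relation $\sum n_\psi\psi=1$). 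Then I would verify directly from the definition in \re{gm} that $\si\in\Gm_s(\si',x)$: given $\psi\in\Psi_m(\CA)$ with $\psi(\si')\leq 0$ and $\psi(x)\leq s$, expand $\psi$ in terms of $\Dt_\CA(\wt\si)$ and use the defining conditions to conclude $\psi(\si)\leq 0$.

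Finally, minimality and the characterization: if $\si''$ is any face of $\si$ with $\si\in\Gm_s(\si'',x)$, then for each $\psi\in\Dt_\CA(\wt\si)$ with $\psi(\si)=0$ or $\psi(x)>s$ one applies the defining property of $\Gm_s(\si'',x)$ to $\pm\psi$ (or to $\psi$ and to a suitable refined root with the opposite vector part) to force $\psi(\si'')=0$; hence $\si''$ satisfies at least as many vanishing conditions as $\si'$, so $\si'\preceq\si''$. This gives both uniqueness of the minimal such face and the stated characterization, from which \rl{min}(a) follows by taking $\si'=m_{x,s}(\si)$ (the case $\si=x$ being trivial since then $\si'=x$ works and is obviously minimal). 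I expect the main obstacle to be the consistency/nonemptiness check for the face $\si'$ in part (b): making precise why the prescribed mix of strict and non-strict inequalities always carves out a genuine nonempty face of $\wt\si$, and why $\si'\preceq\si$ rather than merely $\si'\preceq\wt\si$ — this is exactly the point where the hypotheses ``$\G$ simple'' and ``$\si\neq x$'' are used, via the positive relation $\sum_\psi n_\psi\psi=1$ among the simple affine roots.
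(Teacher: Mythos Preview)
Your approach to (a) is different from the paper's and somewhat more involved. The paper simply picks a point $y\in\si$ and takes $\wt{\si}$ to be any chamber with $\si\preceq\wt{\si}$ whose closure meets the half-open geodesic $(y,x]$; then any $\psi\in\Dt_{\CA}(\wt{\si})$ with $\psi(\si)=0$ and $\psi(x)<0$ would be strictly negative on $(y,x]$, hence nonpositive on $\wt{\si}$, contradicting $\psi(\wt{\si})>0$. Your inductive enlargement also works, once you observe that at each step the only refined affine roots changing sign from $0$ to nonzero are positive multiples of the chosen $\psi_i$ (since $\si_i$ has codimension one in $\si_{i+1}$); this resolves the concern you flag. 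Note, however, that your phrase ``keeping all the old sign conditions'' is not literally correct: roots proportional to $\psi_i$ necessarily change sign as well.

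There is a genuine error in your minimality argument for (b). You propose to show that every $\psi\in\Dt_{\CA}(\wt{\si})$ with $\psi(\si')=0$ (i.e., with $\psi(\si)=0$ or $\psi(x)>s$) also satisfies $\psi(\si'')=0$. This implication is false: take $\si''=\si$ itself (trivially $\si\in\Gm_s(\si,x)$) and any $\psi$ with $\psi(\si)>0$ and $\psi(x)>s$; then $\psi(\si')=0$ but $\psi(\si'')=\psi(\si)>0$. Moreover, even if the implication held, your inference is backwards: if $\si''$ satisfies \emph{more} vanishing conditions than $\si'$, then $\si''\preceq\si'$, not $\si'\preceq\si''$. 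The correct argument (which the paper gives) runs in the other direction: start from $\psi\in\Dt_{\CA}(\wt{\si})$ with $\psi(\si'')=0$ and show $\psi(\si')=0$. Since $\si''\preceq\si\preceq\wt{\si}$, we have $\psi(\si)\geq 0$; and since $\si\in\Gm_s(\si'',x)$ with $\psi(\si'')\leq 0$, either $\psi(x)>s$ (so $\psi(\si')=0$ by definition) or $\psi(x)\leq s$, in which case $\psi(\si)\leq 0$, hence $\psi(\si)=0$, so again $\psi(\si')=0$.

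A smaller point: in verifying $\si\in\Gm_s(\si',x)$, you cannot ``expand $\psi$ in terms of $\Dt_{\CA}(\wt{\si})$'' with nonnegative integer coefficients unless you know $\psi(\wt{\si})>0$. The paper handles this by taking the contrapositive: assume $\xi(\si')\leq 0$ and $\xi(\si)>0$ (which forces $\xi(\wt{\si})>0$, making the expansion available), and derive $\xi(x)>s$.
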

\begin{proof}
(a)  Choose a point $y\in\si$, and a chamber $\wt{\si}\in [\CA_m]$ such that $\si\preceq\wt{\si}$ and
$\cl(\wt{\si})\cap (y,x]\neq\emptyset$. We claim that this chamber satisfies the required property.
Indeed, let $\psi\in\Psi_m(\CA)$ be such that $\psi(\si)=0$ and $\psi(x)< 0$. Then $\psi(y)=0$, thus
$\psi|_{(y,x]}<0$. Since $\cl(\wt{\si})\cap (y,x]\neq\emptyset$, we conclude that $\psi(\wt{\si})<0$,
hence $\psi\notin\Dt_{\CA}(\wt{\si})$.

(b) Assume that for every $\psi\in \Dt_{\CA}(\wt{\si})$ we have
$\psi(\si)=0$ or $\psi(x)>s$. Then, by our choice of $\wt{\si}$, for  every $\psi\in \Dt_{\CA}(\wt{\si})$ we have
$\psi(x)\geq 0$; thus  $x\preceq\wt{\si}$. Since $\si\neq x$, there exists $\psi_0\in \Dt_{\CA}(\wt{\si})$ such that
$\psi_0(x)=0$ and $\psi_0(\si)> 0$. Then $\psi_0(x)\leq s$, contradicting our assumption.

By the previous paragraph, there exists a unique face $\si'\preceq\si$ such that for every $\psi\in \Dt_{\CA}(\wt{\si})$ we have $\psi(\si')=0$ if and only if $\psi(\si)=0$ or $\psi(x)>s$. We claim that $\si\in \Gm_s(\si',x)$, that is, for every $\xi\in\Psi_m(\CA)$ satisfying $\xi(\si')\leq 0$ and $\xi(\si)>0$, we have  $\xi(x)>s$.

Since $\xi(\si)>0$, we have $\xi(\wt{\si})>0$. Thus the affine root $\xi$ is of the form $\sum_{\psi\in \Dt_{\CA}(\wt{\si})}n_{\psi}\psi$, where $n_{\psi}\in\B{Z}_{\geq 0}$ for all $\psi$. Since $\xi(\si')\leq 0$, we get $n_{\psi}=0$ when $\psi(\si')>0$. Thus every $\psi\in \Dt_{\CA}(\wt{\si})$ with $n_{\psi}>0$ satisfies $\psi(\si')=0$, that is, $\psi(\si)=0$ or $\psi(x)>s$. By our choice of $\wt{\si}$ (see (a)), in both cases, we have $\psi(x)\geq 0$.

Since $\xi(\si)>0$, there exists therefore $\psi_0\in \Dt_{\CA}(\wt{\si})$ with $\psi_0(x)>s$ and $n_{\psi_0}>0$. Hence
$\xi(x)\geq n_{\psi_0}\psi_0(x)\geq \psi_0(x)>s$, as claimed.

It remains to show that for every $\si''\preceq\si$ such that $\si\in \Gm_s(\si'',x)$ we have $\si'\preceq\si''$.
Choose $\psi\in \Dt_{\CA}(\wt{\si})$ such that $\psi(\si'')=0$. We want to show that $\psi(\si')=0$, that is,
$\psi(\si)=0$ or $\psi(x)>s$.

Equivalently, assuming that $\psi(x)\leq s$, we want to conclude that $\psi(\si)=0$, that is,
$\psi(\si)\leq 0$ and $\psi(\si)\geq 0$. Since $\si\in \Gm_s(\si'',x)$ and $\psi(\si'')\leq 0$, we have
$\psi(\si)\leq 0$. On the other hand, since $\psi\in \Dt_{\CA}(\wt{\si})$ and $\si\preceq\wt{\si}$,
we have $\psi(\si)\geq 0$.
\end{proof}

\begin{Emp} \label{E:pfmin}
\begin{proof}[Proof of \rl{min}]

(a) If $\si=x$, then $\si':=x$ satisfies the property, so we can assume that $\si\neq x$. Decomposing $\G$ as a product $\prod_i\G_i$, we get a decomposition of   $[\CX_m(\G)]$ as the product $\prod_i [\CX_m(\G_i)]$. Then $\si$ and $x$ decompose as  products $\si=\prod\si_i$ and $x=\prod x_i$.
Moreover, every face $\si'\preceq\si$ decomposes as $\si'=\prod_i\si'_i$, and we have
$\si\in \Gm_s(\si',x)$ if and only if $\si_i\in \Gm_s(\si'_i,x_i)$ for all $i$. Thus we can assume that
$\G$ is simple, in which case the assertion follows from \rl{min'}(b).

(b) Consider two maximal polysimplices $\si''_1,\si''_2\in \Gm_s(\si',x)\subseteq[\C{A}_m]$ such that $\si'\preceq\si''_1,\si''_2$.
First we claim that $\si''_1$ and $\si''_2$ are faces of the same chamber. For this we have to show that there is no
$\psi\in\Psi_m(\CA)$ such that $\psi(\si''_1)>0$ and  $\psi(\si''_2)< 0$.

Indeed, assume that there exists $\psi\in\Psi_m(\CA)$ such that $\psi(\si''_1)>0$ and  $\psi(\si''_2)< 0$.
Since $\si'\preceq\si''_2$ and $\si''_1\in \Gm_s(\si',x)$, this implies that $\psi(\si')\leq 0$, thus $\psi(x)>s\geq 0$.
Similarly, repeating the above argument interchanging $\si''_1$ with $\si''_2$ and $\psi$ with $-\psi$, we conclude that
$\psi(x)< 0$, a contradiction.

Since $\si''_1$ and $\si''_2$ are faces of the same chamber, they generate a polysimplex $\si''_3$ such that $\si''_1,\si''_2\preceq\si''_3$.
Moreover, since $\Gm_s(\si',x)$ is convex, we conclude that $\si''_3\in\Gm_s(\si',x)$. Since $\si''_1$ and $\si''_2$ are assumed to be
maximal, we thus conclude that $\si''_1=\si''_3=\si''_2$.
\end{proof}
\end{Emp}

\begin{Emp} \label{E:pfequiv}
\begin{proof}[Proof of \rl{equiv}]
(a)  Assume that $m_{x,s}(\tau)=\si'$. Then $\si'\preceq\tau$ and $\tau\in \Gm_s(\si',x)$.
Hence $\tau\preceq\si''$ by the definition of $\si''$ (see \rl{min}(b)).

Conversely, assume that $\si'\preceq\tau\preceq\si''$, and we want to show that $\tau':=m_{x,s}(\tau)$ equals $\si'$. Since $\si''\in \Gm_s(\si',x)$ and $\tau\preceq\si''$, we conclude that $\tau\in \Gm_s(\si',x)$, thus $\tau'\preceq\si'$. On the other hand, since $\tau\in \Gm_s(\tau',x)$ and $\si'\preceq\tau$, we have
$\si'\in \Gm_s(\tau',x)$. Since $\tau'\preceq\si'$, we conclude that $m_{x,s}(\si')\preceq\tau'\preceq\si'$. Finally, since $m_{x,s}(\tau)=\si'$, we deduce that
$m_{x,s}(\si')=\si'$, thus $\tau'=\si'$.

(b) Assume that $\si'\preceq\tau\preceq\si''$, and we want to show that $\tau\in \Si$ and $\tau\notin \Si'$. Since $\Si'$ and $\Si$ are subcomplexes, it suffices to show that $\si'\notin\Si'$ and
$\si''\in\Si$.

Assume that $\si'\in\Si'$. Since $x\in\Si'$ and $\Si'$ is convex, we conclude that $\Gm_0(\si',x)\subseteq\Si'$ (see \re{remgm}(b)). Thus $m_{x,s}^{-1}(\si')\subseteq \Gm_s(\si',x)\subseteq \Gm_0(\si',x)$ (see \re{remgm}(c)) is contained in $\Si'$. But this contradicts the assumptions
$\si\in m_{x,s}^{-1}(\si')$ and $\si\notin\Si'$.

Next,  since $\si\in \Si$, $\si'\preceq\si$  and $\Si$ is a subcomplex, we conclude that $\si'\in \Si$. Thus, arguing as in the previous paragraph we conclude that  $\Gm_s(\si',x)\subseteq \Si$, thus $\si''\in\Si$.

(c) We have to show that there exists $\tau\neq \si'$ such that $\si'=m_{x,s}(\tau)$. Decomposing $\G$ into a product, if necessary, we may assume that
$\G$ is simple. Since $x\in\Si'$ and $\si'\notin \Si'$ (use (b)), we conclude that $\si'\neq x$.

Let $\CA\subseteq\CX$ be an apartment containing $\si'$ and $x$, and let $\wt{\si}\in[\CA_m]$ be a chamber such that $\si'\preceq\wt{\si}$ and
$\phi(x)\geq 0$ for every $\psi\in\Dt_{\CA}(\wt{\si})$ such that  $\psi(\si')=0$ (see \rl{min'}(a)).
Since $\si'\preceq\wt{\si}$ and $\si'\notin\Si'$, we conclude that $\wt{\si}\notin\Si'$.
Using the assumption $\Upsilon_{x,s}\subseteq\Si'$, we conclude that $\wt{\si}\notin  \Upsilon_{x,s}$. Thus, by Remark \re{rempsi}, there exists
$\psi_0\in\Dt_{\CA}(\wt{\si})$ such that $\psi_0(x)>s$.

By \rl{min'}(b), we conclude that $\psi_0(\si')=0$. Hence there exists a unique $\tau\preceq\wt{\si}$ such that $\si'$ is a face of $\tau$ of codimension one, and $\psi_0(\tau)>0$. By construction, for every $\psi\in\Dt_{\CA}(\wt{\si})$ with $\psi(\si')=0$ we have either $\psi(\tau)=0$ or $\psi=\psi_0$.
Since $\psi_0(x)>s$, the desired equality $m_{x,s}(\tau)=\si'$ follows the characterization of $m_{x,s}(\tau)$, given in \rl{min'}(b).
\end{proof}
\end{Emp}

\section{Formula for the projector and applications}
\noindent In this section we prove \rt{explicit}, \rp{lie}, and Corollaries \ref{C:explicit}, \ref{C:formchar}, \ref{C:Av} and \ref{C:lie}.

\begin{Emp} \label{E:pfexplicit}
\begin{proof}[Proof of \rt{explicit}] We divide the proof into six steps.

{\bf Step 1.}  For every $h\in \C{H}(G)$,  the inductive system  $\{E^{\Si}_r\ast h\}_{\Si\in\T_m}$ stabilizes.

\begin{proof}
Fix  $x\in V(\CX_m)$ and $n\in\B{N}$ such that $\dt_{G_{x,n^+}}\ast h=h$. It suffices to show that the inductive system  $\{E^{\Si}_r\ast\dt_{G_{x,n^+}}\}_{\Si\in\T_m}$ stabilizes, so the assertion follows from \rp{stab}(a).
\end{proof}

{\bf Step 2.} There exists a unique element $z\in Z_G$ such that $z(h)=E^{\Si}_r\ast h$ for every $h\in\C{H}(G)$ and every sufficiently large $\Si\in\T_m$, that is, $z(h)=\lim_{\Si\in \T_m}(E^{\Si}_r\ast h)$.

\begin{proof}
By Step 1,  there exists a unique endomorphism $z\in\End_{\B{C}}\C{H}(G)$ such that
$z(h)=\lim_{\Si\in\T_m} E^{\Si}_r\ast h$ for every $h\in\C{H}(G)$. We claim that $z\in Z_G$.

Since $z$ commutes with the right convolutions, it suffices to show that $z$ is $\Ad G$-invariant (use \re{cent}(b)).
First we claim that $z$ is $\Ad K$-invariant for every compact subgroup $K\subseteq G^{\ad}$. Indeed, the
$\Si\in\T_m$'s in the equality $z(h)=\lim_{\Si\in\T_m}(E^{\Si}_r\ast h)$ can be chosen to be $\Ad K$-invariant, thus $z$ is $\Ad K$-invariant.

It remains to show that the group $G^{\ad}$ is generated by compact subgroups. Since the corresponding simply connected group $G^{\sc}$ is known to be generated by compact subgroups, and $G^{\sc}$ acts transitively on the set of chambers in $[\CX(G)]$, the  assertion follows from the fact that the stabilizer $\Stab_{G^{\ad}}(\si)$ of every chamber is compact.
\end{proof}

{\bf Step 3.}  For every $V\in R(G)$ and $v\in V$, the inductive system  $\{E^{\Si}_r(v)\}_{\Si\in\T_m}$ stabilizes, and
 $z(v)=\lim_{\Si\in \T_m}E^{\Si}_r(v)$.

 \begin{proof}
 Choose $h\in \C{H}(G)$ such that $h(v)=v$. Then $E^{\Si}_r(v)=E^{\Si}_r(h(v))=(E^{\Si}_r\ast h)(v)$ stabilizes (by Step 1), and
 the limit value equals $z(h)(v)=z(h(v))=z(v)$ (see \re{cent}(c)).
\end{proof}

{\bf Step 4.} For every $V\in \Irr(G)_{\leq r}$, we have  $z|_{V}=\Id_V$.

\begin{proof}
By definition, there exists $x\in\CX$ such
that $V^{G_{x,r^+}}\neq 0$. Thus, by Schur's lemma, it remains to show that
$z(v)=v$ for all $v\in V^{G_{x,r^+}}$. Using \rp{stab}(b), we conclude that $z(\dt_{G_{x,r^+}})=\dt_{G_{x,r^+}}$. Note that for each  $v\in V^{G_{x,r^+}}$ we have $\dt_{G_{x,r^+}}(v)=v$. Therefore, by \re{cent}(c), we conclude that
$z(v)=z(\dt_{G_{x,r^+}}(v))=(z(\dt_{G_{x,r^+}}))(v)=\dt_{G_{x,r^+}}(v)=v.$
\end{proof}

{\bf Step 5.} For every $V\in R(G)_{>r}$, we have $z|_{V}=0$.

\begin{proof}
For every $V\in R(G)_{>r}$ and $x\in\CX$, we have $V^{G_{x,r^+}}=0$. Thus $\dt_{G_{x,r^+}}(v)=0$ for all $v\in V$.
Therefore we have $E^{\Si}_r(v)=0$ for all $\Si\in\T_m$ and $v\in V$, hence $z(v)=0$ by Step 3.
\end{proof}

{\bf Step 6.} Since an element of $Z_G$ is determined by its action on irreducible representations, it follows from
 Steps 4 and 5 that $z=\Pi_r$ (see \re{idemp} for a more direct argument).
\end{proof}
\end{Emp}

\begin{Emp} \label{E:idemp}
{\bf An alternative proof.} Using the arguments, described above, we can give both an alternative  proof of
the decomposition $R(G)=R(G)_{\leq r}\oplus R(G)_{>r}$ and a more direct proof of the equality $z=\Pi_r$. We do it in two steps.

(I) The element $z\in Z_G$, constructed in Step 2 of \re{pfexplicit}, is idempotent.

\begin{proof}
We have to show that $z\circ z=z$. By \re{cent}(b), it suffices to show that for every $h\in\C{H}(G)$ we have $z(z(h))=z(h)$. By
the definition of $z$, we have to show that $z(E_r^{\Si}\ast h)=E_r^{\Si}\ast h$ for all sufficiently large $\Si\in\T_m$.
By construction, we have $z(E_r^{\Si}\ast h)=z(E_r^{\Si})\ast h$. So it suffices to show that
 $z(E_r^{\Si})=E_r^{\Si}$ for every $\Si\in\T_m$, or equivalently that
 $z(\dt_{\si,r^+})=\dt_{\si,r^+}$ for every $\si\in [\CX_m]$. But this follows from the definition of $z$ and \rp{stab}(b).
\end{proof}

(II) For every  $V\in R(G)$, set $V_{\leq r}:=\im (z|_V)\subseteq V$ and  $V_{>r}:=\Ker (z|_V)\subseteq V$. Since $z\in Z_G$ is an idempotent, we have a direct sum decomposition $V=V_{\leq r}\oplus V_{>r}$, and we also have $z|_{W}=\Id_W$ (resp. $z|_{W}=0$) for every irreducible subquotient $W$ of $V^{\leq r}$ (resp. $V^{>r}$). Then the result of Step 5 (resp. Step 4) of \re{pfexplicit} implies that $V_{\leq r}\in R(G)_{\leq r}$ (resp. $V_{>r}\in R(G)_{>r}$).  This implies both the desired decomposition $R(G)=R(G)_{\leq r}\oplus R(G)_{>r}$ and the desired equality $z=\Pi_r$.
\end{Emp}

\begin{Emp} \label{E:pfexplicit}
\begin{proof}[Proof of \rco{explicit}]
(a) For  $f\in C_c^{\infty}(G)$ and $E\in D(G)$, we define the convolution $E\ast f\in C^{\infty}(G)$ by the rule $(E\ast f) dg:= E\ast (f dg)$ for a Haar measure $dg$ on $G$. Then $E(f)=(E\ast\iota^*(f))(1)$, where $\iota:G\to G$ is the map $g\mapsto g^{-1}$.

By \rt{explicit}, for every $h\in\C{H}(G)$ we have  $E_r\ast h=\lim_{\Si\in\T_m}(E^{\Si}_r\ast h)$. Therefore for every $f\in C_c^{\infty}(G)$ we have  $E_r\ast f=\lim_{\Si\in\T_m}(E^{\Si}_r\ast f)$, hence $E_r(f)=\lim_{\Si}E_r^{\Si}(f)$.

(b) Since each $E_r^{\Si}$ is supported on $G_{r^+}$, we conclude by (a).
\end{proof}
\end{Emp}

\begin{Emp}
{\bf Generalized functions of depth $\leq r$.}
%
(a) Since the space of generalized functions $\wh{C}(G)$ is the linear dual  of $\C{H}(G)$, the Bernstein center $Z_G$ acts on  $\wh{C}(G)$
by the formula $z(\chi)(h):=\chi(z(h))$ for every $z\in Z_G$, $\chi\in\wh{C}(G)$ and $h\in\C{H}(G)$.
We say that $\chi\in \wh{C}(G)$ is of {\em depth $\leq r$}, if $\Pi_r(\chi)=\chi$.

(b) Note that for every admissible representation $V\in R(G)_{\leq r}$, its character $\chi_{V}$ is of depth $\leq r$. Indeed, for every $h\in\C{H}(G)$ we have $\chi_{V}(h)=\Tr(h|_V)$ and
$\Pi_r(\chi_{V})(h)=\chi_{V}(\Pi_r(h))=\Tr(\Pi_r(h)|_V)$. Since $\Pi_r(h)|_V=\Pi_r|_V\circ h|_V$ (by \re{cent}(c)) and $\Pi_r|_V=\Id_V$ (because $V\in R(G)_{\leq r}$), the equality $\Pi_r(\chi_{V})=\chi_V$ follows.

Thus the following result is a generalization of \rco{formchar}.
\end{Emp}

\begin{Cor} \label{C:genfun}
For every invariant generalized function $\chi\in \wh{C}^{G}(G)$ of depth $\leq r$ and every $h\in\C{H}(G)$, we have the equality
\[
\chi(h)=\lim_{\Si\in\T_m} \left[\sum_{\si\in\Si}(-1)^{\dim\si} \chi(\dt_{G_{\si,r^+}}\ast h\ast\dt_{G_{\si,r^+}})\right].
\]
\end{Cor}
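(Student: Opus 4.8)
The plan is to deduce this directly from \rt{explicit} and the definition of the depth filtration on generalized functions. By definition, ``$\chi$ has depth $\leq r$'' means $\Pi_r(\chi)=\chi$ in $\wh{C}(G)$; since $Z_G$ acts on $\wh{C}(G)$ by $z(\chi)(h)=\chi(z(h))$, evaluating this equality on an arbitrary $h\in\C{H}(G)$ gives $\chi(h)=\chi(\Pi_r(h))$. Now $\Pi_r(h)$ is the action of the Bernstein projector on the element $h$ of the left regular representation $\C{H}(G)\in R(G)$ (and equals $E_r\ast h$ by \re{cent}(d)), so \rt{explicit} applies: the inductive system $\{E^{\Si}_r\ast h\}_{\Si\in\T_m}$ stabilizes and $\Pi_r(h)=\lim_{\Si\in\T_m}E^{\Si}_r\ast h$.

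Since this inductive system is eventually constant, not merely convergent, applying the linear functional $\chi$ commutes with the limit for free, so $\chi(h)=\chi(\Pi_r(h))=\lim_{\Si\in\T_m}\chi(E^{\Si}_r\ast h)$. Unwinding $E^{\Si}_r=\sum_{\si\in\Si}(-1)^{\dim\si}\dt_{G_{\si,r^+}}$ (see \re{tm}(c)) by linearity of $\chi$ turns the right-hand side into $\lim_{\Si\in\T_m}\sum_{\si\in\Si}(-1)^{\dim\si}\chi(\dt_{G_{\si,r^+}}\ast h)$, which is the asserted formula except that the idempotent $\dt_{G_{\si,r^+}}$ occurs only on the left of $h$.

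To insert the second copy on the right, I would use that $\chi\in\wh{C}^G(G)$ is conjugation-invariant, hence a trace form: $\chi(a\ast b)=\chi(b\ast a)$ for all $a,b\in\C{H}(G)$. This is standard, and has a one-line proof: $a\ast b-b\ast a$ is an integral of elements of the form $f-{}^gf$ (here $G$ is unimodular), each of which $\chi$ annihilates. Combined with the idempotency $\dt_{G_{\si,r^+}}\ast\dt_{G_{\si,r^+}}=\dt_{G_{\si,r^+}}$, this gives, for each $\si$, the equality $\chi(\dt_{G_{\si,r^+}}\ast h\ast\dt_{G_{\si,r^+}})=\chi(\dt_{G_{\si,r^+}}\ast\dt_{G_{\si,r^+}}\ast h)=\chi(\dt_{G_{\si,r^+}}\ast h)$. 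Substituting term by term into the formula of the previous paragraph yields the corollary.

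I do not expect a genuine obstacle: the statement is essentially a formal consequence of \rt{explicit} once the $Z_G$-action on $\wh{C}(G)$ is unwound. The only two points needing a word of care are (i) pulling the limit inside $\chi$, which is immediate because the system stabilizes, and (ii) the trace identity for invariant generalized functions, to be cited or proved in one line as above. If one did not wish to invoke invariance at all, the weaker formula with $\dt_{G_{\si,r^+}}\ast h$ in place of $\dt_{G_{\si,r^+}}\ast h\ast\dt_{G_{\si,r^+}}$ would already follow; the symmetric form stated here is the one that specializes, via $\chi=\chi_V$, to \rco{formchar}.
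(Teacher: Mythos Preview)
Your proposal is correct and follows essentially the same route as the paper's proof: use $\Pi_r(\chi)=\chi$ to rewrite $\chi(h)=\chi(\Pi_r(h))$, apply \rt{explicit} to the stabilizing system $\{E^{\Si}_r\ast h\}$, and then use $\Ad G$-invariance of $\chi$ (the trace identity $\chi(a\ast b)=\chi(b\ast a)$) together with the idempotency of $\dt_{G_{\si,r^+}}$ to symmetrize each summand. Your added remarks on why the limit commutes with $\chi$ and why the trace identity holds are fine and match the level of detail in the paper.
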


\begin{proof}
Since $\chi$ is of depth $\leq r$, we have the equality  $\chi(h)=(\Pi_r(\chi))(h)=\chi(\Pi_r(h))$. Then by
\rt{explicit}, $\chi(h)$ equals
\[\lim_{\Si\in\T_m}
\chi(E^{\Si}_r\ast h)=\lim_{\Si\in\T_m}\left[\sum_{\si\in\Si}(-1)^{\dim\si}\chi(\dt_{G_{\si,r^+}}\ast h)\right].\]
Finally, since $\chi$ is $\Ad G$-invariant, we have
\[\chi(\dt_{G_{\si,r^+}}\ast h\ast\dt_{G_{\si,r^+}})= \chi(\dt_{G_{\si,r^+}}\ast\dt_{G_{\si,r^+}}\ast h)=\chi(\dt_{G_{\si,r^+}}\ast h),\]
and the assertion follows.
\end{proof}

\begin{Emp}
\begin{proof}[Proof of \rco{Av}]
Note that $\Av_{Y_{\P}^{\Si}}(E\ast \dt_{\P^+_n})=E\ast \Av_{Y_{\P}^{\Si}}(\dt_{\P^+_n})$, since $E$ is $\Ad G$-invariant, and that
$E_n^{\Si}=\sum_{\P\in\Par}(-1)^{\dim \si_{\P}}\Av_{Y_{\P}^{\Si}}(\dt_{\P^+_n})$ for every  $\Si\in\T$.
Thus the right-hand side of \form{aver} equals $\lim_{\Si\in\T}(E\ast E_n^{\Si})$.

Next, for every $f\in C_c^{\infty}(G)$ we have $\Pi_n(E)(f)=E(\Pi_n(f))=\lim_{\Si\in\T}E(\Si_n^{\Si}\ast f)$.
Thus it remains to show that for every $\Si\in\T$ we have $E(E_n^{\Si}\ast f)= (E\ast E_n^{\Si})(f)$.

Since $(E\ast E_n^{\Si})(f)=E(f\ast \iota^*(E_n^{\Si}))$, where $\iota$ is as in \re{pfexplicit}, and $\iota^*(E_n^{\Si})=E_n^{\Si}$,
we are reduced to the equality
$E(E_n^{\Si}\ast f)=E(f\ast E_n^{\Si})$, which holds because $E$ is $\Ad G$-invariant.
\end{proof}
\end{Emp}

\begin{Emp} \label{E:pflie}
\begin{proof}[Proof of \rp{lie}]
For every $\Si\in \T_m$ we set $\g^*_{\Si,-r}:=\cup_{\si\in\Si}\g^*_{\si,-r}$.
Then $\g^*_{\Si,-r}\subseteq\g^*$ is an open and compact subset, and $\g^*_{-r}=\cup_{\Si\in \T_m}\g^*_{\Si,-r}$.
Thus we have  $1_{\g^*_{-r}}= \lim_{\Si\in\T_m} 1_{\g^*_{\Si,-r}}$, hence
$\C{E}_r=\lim_{\Si\in\T_m}\C{F}^{-1}(1_{\g^*_{\Si,-r}})$. It therefore suffices to show that $\C{F}^{-1}(1_{\g^*_{\Si,-r}})=\C{E}^{\Si}_r$,
that is, $\C{F}(\C{E}^{\Si}_r)=1_{\g^*_{\Si,-r}}$.

Notice that the restriction of the Fourier transform $\C{F}:D(\g)\to\wh{C}(\g^*)$ to
$\C{H}(\g)$ is the Fourier transform $\C{H}(\g)\to C_c^{\infty}(\g^*)$.

Since $\psi$ is trivial on $(\varpi)$ but nontrivial on $\C{O}$,
for every $\si\in [\CX_m]$ the lattice $\g^*_{\si,-r}\subseteq\g^*$ is the orthogonal complement
of $\g_{\si,r^+}\subseteq\g$ with respect to the pairing $\g\times\g^*\to\B{C}\m:(a,b)\mapsto\psi(\lan b,a\ran)$.
Thus, we have the equality $\C{F}(\dt_{\g_{\Si,r^+}})=1_{\g^*_{\Si,-r}}$, hence
$\C{F}(\C{E}^{\Si}_r)=\sum_{\si\in\Si}(-1)^{\dim\si}
1_{\g^*_{\si,-r}}$. Therefore it suffices to show the following result.
\end{proof}
\end{Emp}

\begin{Lem} \label{L:ep}
For every $\Si\in\T_m$, we have the equality $1_{\g^*_{\Si,-r}}=\sum_{\si\in\Si}(-1)^{\dim\si}
1_{\g^*_{\si,-r}}$.
\end{Lem}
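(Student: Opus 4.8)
The plan is to reduce the identity of functions on $\g^*$ to a pointwise statement, and for each fixed $b\in\g^*$ identify the terms that contribute. Fix $b\in\g^*$ and set $\Si_b:=\{\si\in\Si : b\in\g^*_{\si,-r}\}$; by definition of $\g^*_{\si,-r}$ this is the set of $\si\in\Si$ such that $b\in\g^*_{x,-r}$ for $x\in\si$. Evaluating both sides of the claimed equality at $b$, the left-hand side is $1$ if $\Si_b\neq\emptyset$ and $0$ otherwise, while the right-hand side equals $\sum_{\si\in\Si_b}(-1)^{\dim\si}$. So the lemma is equivalent to the assertion: for every $b$, either $\Si_b=\emptyset$, or $\sum_{\si\in\Si_b}(-1)^{\dim\si}=1$.

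The first input is that $\Si_b$ is itself a subcomplex of $\Si$: by \rl{convex}, the set $\CX(b,r)$ of all $x\in\CX$ with $b\in\g^*_{x,-r}$ is convex, and (since the condition $b\in\g^*_{\si,-r}$ depends only on the polysimplex $\si$, by \rl{mpfil}) the support $|\Si_b|=|\Si|\cap\CX(b,r)$ is closed and convex. Moreover if $\si\in\Si_b$ and $\si'\preceq\si$, then $\g^*_{\si',-r}\supseteq\g^*_{\si,-r}$ (the Moy--Prasad lattices get larger as one passes to a face, since $\g_{\si',r^+}\subseteq\g_{\si,r^+}$), so $\si'\in\Si_b$; thus $\Si_b$ is a nonempty convex subcomplex whenever it is nonempty, i.e. $\Si_b\in\T_m$ (after intersecting with $\Si$).

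It therefore suffices to prove the purely combinatorial fact that for \emph{any} nonempty finite convex subcomplex $\Delta\in\T_m$ one has $\sum_{\si\in\Delta}(-1)^{\dim\si}=1$; in other words, the Euler characteristic of a nonempty convex subcomplex of the refined building is $1$. Here ``convex" is meant in the metric (CAT(0)) sense of \re{notcomb}(b). The cleanest way I would argue this is by showing $|\Delta|$ is contractible and invoking the fact that $\sum_{\si\in\Delta}(-1)^{\dim\si}$ is the Euler characteristic of $|\Delta|$ as a CW complex; contractibility follows since a nonempty convex subset of a CAT(0) space is geodesically contractible (contract along geodesics to a chosen point $x_0\in|\Delta|$, using uniqueness of geodesics from \re{buildings}(c)), and one checks the geodesic contraction respects the polysimplicial structure sufficiently to compute the combinatorial Euler characteristic — or, more elementarily, one fixes a vertex $x_0\in V(\Delta)$ and runs induction on the number of polysimplices, peeling off, via \rl{min}/\rl{equiv} with $s=0$, a ``maximal" polysimplex far from $x_0$ together with the interval of faces $\si'\preceq\tau\preceq\si''$ above its image $\si'=m_{x_0,0}(\si)$, whose alternating sum vanishes exactly as in the proof of \rp{stab}(a).

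The main obstacle is the last step: carefully extracting, from a general nonempty convex subcomplex $\Delta$, a removable ``collapsible" piece whose alternating sum of dimensions is $0$ and whose complement is still a nonempty convex subcomplex, so that the induction on $\#\Delta$ closes. The machinery of \rl{min}, \rl{equiv} (parts (a) and (b), with the convex subcomplexes $\Si'=\{x_0\}\subseteq\Si=\Delta$ and $s=0$) is tailored precisely for this — the sets $m_{x_0,0}^{-1}(\si')$ partition $\Delta\setminus\{x_0\}$ into ``intervals" $[\si',\si'']$ each contributing $0$ once $\si''\neq\si'$, and \re{remgm}(b) guarantees $\{x_0\}$ together with any union of such intervals that is downward closed is again convex — so the remaining work is bookkeeping rather than a new idea.
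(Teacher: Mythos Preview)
Your approach is essentially identical to the paper's: both reduce pointwise, identify $\Si_b$ (the paper writes $\Si(b)$) as a convex subcomplex via \rl{convex} and the inclusion $\g^*_{\si,-r}\subseteq\g^*_{\si',-r}$ for $\si'\preceq\si$, and conclude that its Euler--Poincar\'e characteristic is $1$. The paper simply takes this last fact as standard, whereas you sketch a justification; note that the partition of $\Dt\sm\{x_0\}$ into intervals $[\si',\si'']$ via \rl{equiv}(a)--(c) with $s=0$ (using $\Upsilon_{x_0,0}=\emptyset$) already computes the alternating sum directly, so the inductive ``peeling'' and the convexity-of-complement check in your final paragraph are unnecessary.
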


\begin{proof}
Set  $\varphi_{\Si}:=\sum_{\si\in\Si}(-1)^{\dim\si}
1_{\g^*_{\si,-r}}$. Clearly,
$\varphi_{\Si}(b)=0$ if $b\in\g^*\sm\g^*_{\Si,-r}$, so it remains to show that
$\varphi_{\Si}(b)=1$ if $b\in\g^*_{\Si,-r}$.

For every $b\in \g^*$, we denote by $[\CX_m](b)$ the set of $\si\in [\CX_m]$ such that $b\in \g^*_{\si,-r}$, and set
$\Si(b):=[\CX_m](b)\cap \Si$. Since $\g^*_{\si,-r}\subseteq \g^*_{\si',-r}$ for every $\si'\preceq\si$, we conclude that
$[\CX_m](b)$ and hence also $\Si(b)$ is a subcomplex of $[\CX_m]$.

By the definition of $\varphi_{\Si}$,  the value
$\varphi_{\Si}(b)$ equals the Euler--Poincar\'e characteristic  of $\Si(b)$. Thus it suffices to show
that for every $b\in \g^*_{\si,-r}$ the complex $\Si(b)$ is convex.
The complex $\Si\in\T_m$ is convex by assumption, hence it remains to show that the complex $[\CX_m](b)$ is convex. Since $|[\CX_m](b)|$ is the convex set $\C{X}(b,r)$ from \rl{convex}, we are done.
\end{proof}

\begin{Emp}
\begin{proof}[Proof of \rco{lie}]
Since $\C{L}$ induces a homeomorphism $G_{\si,r^+}\isom\g_{\si,r^+}$ for every $\si\in [\CX_m]$,
it satisfies $\C{L}_!(\dt_{G_{\si,r^+}}|_{G_{r^+}})=\dt_{\g_{\si,r^+}}|_{\g_{r^+}}$. Hence $\C{L}_!(E^{\Si}_r|_{G_{r^+}})=\C{E}_r^{\Si}|_{\g_{r^+}}$ for every $\Si\in \T_m$. We conclude by
\rco{explicit} and \rp{lie}.
\end{proof}
\end{Emp}

\section{Relation to the character of the Steinberg representation}
\noindent In this section we prove \rt{steinberg}.

\begin{Emp} \label{E:stfin}
{\bf Steinberg representations of finite groups} (compare \cite[3.2,4.2]{Cu}).
For an algebraic group $\L$ over a finite field $\fq$, we set $L:=\L(\fq)$.

(a) Let $\L$ be a connected reductive group over a finite field $\fq$, $\b\subseteq \L$ a Borel subgroup, and $\U\subseteq \b$ the unipotent radical of $\b$. Then the Hecke algebra $\C{H}(L,B)$ has a basis $h_w:=\frac{1}{|B|}1_{BwB}$, parameterized by elements $w$ of the Weyl group $W_L$ of $\L$, where $|B|$ denotes the cardinality of $B$.

(b) Let $\St_L$ be the Steinberg representation of $L$. Then $\St_L$ is an irreducible representation, the space of invariants  $\St_L^B$ is a one-dimensional representation of the Hecke algebra $\C{H}(L,B)$, and each $h_w$ acts on $\St_L^B$ as $\sgn(w)\Id$.

(c) The restriction of $\St_L$ to $U=\U(\fq)$ is the regular representation. Therefore $\Tr(1,\St_L)=|U|$, and
$\Tr(g,\St_L)=0$ for every unipotent element
$1\neq g\in L$.
\end{Emp}


\begin{Emp} \label{E:stpad}
{\bf Steinberg representations of $p$-adic groups} (see \cite{Bo}, or \cite[Section 8]{Ca} and \cite[p. 199-205]{BW}).

(a) Let $\St_G$ be the Steinberg (or special) representation of $G=\G(F)$. Then $\St_G$ is irreducible, the space of Iwahori invariants $\St_G^I$ is a one-dimensional module of the Hecke algebra $\C{H}(G,I)$, and for every element $w$ of the affine Weyl group $W_G^{\aff}$ of $G$,
the element $1_{IwI}\dt_I\in \C{H}(G,I)$ acts on $\St_G^I$ as $\sgn(w)\Id$.

(b) As a virtual representation, $\St_G$ equals the alternating sum of
the non-normalized induced representations $\Ind_{Q}^{G}(1_{Q})$, where $Q=\Q(F)$, and
$\Q$ runs over the set of standard parabolic subgroups $\Q\subseteq \G$.
\end{Emp}

\begin{Emp} \label{E:iwpar}
{\bf Parahoric subgroups.}
(a) Fix a parahoric subgroup $P\subseteq G$ and an Iwahori subgroup $I\subseteq P$.
Then the quotient $P/P^+$ is naturally isomorphic to $L=\L(\fq)$ for
some connected reductive group $\L=\L_P$ over $\fq$. Under this isomorphism $I/P^{+}\subseteq P/P^{+}$
corresponds to $B=\b(\fq)$ for some Borel subgroup $\b=\b_P\subseteq\L$.

(b) Note that for every representation $V\in R(G)$, the space of invariants $V^{P^+}$ is a representation of  $P/P^+=L$.
\end{Emp}

\begin{Prop} \label{P:restr}
The $L$-representation $\St_G^{P^+}$ is isomorphic to the Steinberg representation $\St_{L}$.
\end{Prop}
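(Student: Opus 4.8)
The plan is to identify $\St_G^{P^+}$ as a module over the finite Hecke algebra $\C{H}(L,B) = \C{H}(P/P^+, I/P^+)$ and to recognize, via the sign-character description, that this module is exactly $\St_L^B$; since $\St_L$ is the unique irreducible $L$-representation whose $B$-invariants realize the sign character of $\C{H}(L,B)$ (see \re{stfin}(b)), and since $\St_G$ is admissible (so $\St_G^{P^+}$ is a finite-dimensional, hence semisimple after suitable bookkeeping) $L$-representation generated by its $B$-invariants, this will pin down $\St_G^{P^+}\cong\St_L$.

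First I would set up the comparison of Hecke algebras. The Iwahori $I\subseteq P$ has image $B=\b(\fq)$ in $P/P^+=L$, and the inclusion $P\hookrightarrow G$ together with the projection $P\surj L$ gives a surjection of double-coset spaces $B\bs L/B \cong (I/P^+)\bs(P/P^+)/(I/P^+)$, which is the image of the finite Weyl group $W_L\hookrightarrow W_G^{\aff}$ inside the affine Weyl group. Concretely, for $w\in W_L$ the element $1_{BwB}\in\C{H}(L,B)$ corresponds to (the image of) $1_{IwI}\dt_I\in\C{H}(G,I)$ acting on $P^+$-invariants: if $v\in\St_G^{P^+}$ then $v\in\St_G^I$ is unnecessary, but for $v\in\St_G^{P^+}$ the convolution $1_{IwI}\dt_I\ast v$ makes sense and factors through the $L$-action of $1_{BwB}$ on $\St_G^{P^+}$ regarded as an $L$-rep. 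This is the routine ``parahoric restriction'' or ``Jacquet-type'' compatibility; I would cite it as standard.

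Next, using \re{stpad}(a): for every $w$ in $W_G^{\aff}$ — in particular for every $w\in W_L\subseteq W_G^{\aff}$ — the element $1_{IwI}\dt_I$ acts on $\St_G^I$ as $\sgn(w)\Id$. Passing to $P^+$-invariants and transporting through the isomorphism of the previous step, this says that $1_{BwB}$ acts on the $\C{H}(L,B)$-module $\St_G^{P^+,\,B}=(\St_G^{P^+})^{I/P^+}$ via $\sgn(w)$. Now $\St_G^I\neq 0$ forces $(\St_G^{P^+})^{B}=\St_G^I\neq 0$, and it is one-dimensional; so the $\C{H}(L,B)$-module $\St_G^{I}$ is exactly the sign character, which by \re{stfin}(b) is $\St_L^B$. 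Since $\St_G^{P^+}$ is an admissible $L$-representation that is generated over $L$ by its $B$-fixed vectors (every irreducible summand of $\St_G^{P^+}$ has nonzero $B$-invariants because $\St_G$ is Iwahori-spherical and $B\bs L/B$ exhausts the relevant Hecke support), and each summand's $B$-invariants must be the sign character, each summand is $\cong\St_L$; finally $\dim(\St_G^{P^+})^B=\dim\St_G^I=1$ forces there to be exactly one summand, giving $\St_G^{P^+}\cong\St_L$.

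The main obstacle is the second step: making precise and correctly normalized the claim that restricting $I$-invariants of $\St_G$ to $P^+$-invariants converts the affine Hecke action of $1_{IwI}\dt_I$ (for $w\in W_L$) into the finite Hecke action of $1_{BwB}$ on the $L$-representation $\St_G^{P^+}$, including getting the measures/volumes and the $\sgn$ normalization to match. I would handle this by working with the idempotent $\dt_{P^+}$, writing $\St_G^{P^+}=\dt_{P^+}\ast\St_G$, observing $\dt_{I}\ast\dt_{P^+}=\dt_I$, and checking that $\dt_{P^+}\ast 1_{IwI}\dt_I\ast\dt_{P^+}$ is supported on $P$ and descends to (a scalar multiple of) $1_{BwB}$ in $\C{H}(L,B)$ — the scalar being $1$ after the total-mass-one normalization — so that the $\sgn(w)$ eigenvalue is preserved. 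Everything else (admissibility of $\St_G$, one-dimensionality of $\St_G^I$, irreducibility of $\St_L$ and uniqueness of the sign-character module) is quoted from \re{stfin} and \re{stpad}.
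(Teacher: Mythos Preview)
Your overall strategy coincides with the paper's: identify $(\St_G^{P^+})^B=\St_G^I$ as a one-dimensional $\C{H}(L,B)$-module carrying the sign character via the embedding $\C{H}(L,B)\cong\C{H}(P,I)\subseteq\C{H}(G,I)$, and then invoke \re{stfin}(b). The paper treats the Hecke-algebra compatibility you labor over in your ``main obstacle'' paragraph as essentially formal.

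The real gap is the step you dispose of in a parenthesis: the assertion that $\St_G^{P^+}$ is generated over $L$ by its $B$-invariants, or equivalently that every irreducible $L$-summand of $\St_G^{P^+}$ has nonzero $B$-fixed vectors. Your justification (``$\St_G$ is Iwahori-spherical and $B\bs L/B$ exhausts the relevant Hecke support'') is not an argument. Knowing $\St_G^I\neq 0$, or even that $\St_G$ is generated by $\St_G^I$, does not by itself prevent $\St_G^{P^+}$ from containing an $L$-summand $V$ with $V^B=0$; a priori nothing stops the $P^+$-averaging of non-$I$-fixed vectors from producing such a summand. Without this step you only get $\St_G^{P^+}\cong\St_L\oplus V$ with $V^B=0$, which is exactly where the paper's short argument arrives before invoking its \rl{inv}.

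The paper proves the needed generation statement as a separate lemma (\rl{inv}): for any smooth $G$-representation generated by $I$-invariants, the $L$-module of $P^+$-invariants is generated by $B$-invariants. The proof reduces to $V=C_c^{\infty}(I\bs G)$, decomposes this as a $P$-module into pieces $\B{C}[I\bs IgP]$, and shows via a building computation that each piece has $P^+$-invariants of the form $\B{C}[B'\bs L]$ for some Borel $B'$; the key point is that $P^+(P\cap g^{-1}Ig)$ is again an Iwahori subgroup of $P$, established by locating the corresponding chamber on the geodesic between the polysimplices for $P$ and $g^{-1}Ig$. This is the substantive content you are missing.
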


\begin{proof}
Denote the $L$-representation $\St_G^{P^+}$ by $\St'$. Then  $(\St')^{B}=\St_G^I$, and we have natural embeddings $W_{L}\hra\wt{W}$ and  $\C{H}(L,B)\hra \C{H}(G,I)$ under which $h_w$ from \re{stfin}(a)  corresponds to $1_{IwI}\dt_I\in \C{H}(G,I)$. Therefore, by \re{stpad}(a), $(\St')^{B}$ is a one-dimensional representation of the Hecke algebra  $\C{H}(L,B)$ such that  $h_w$ acts on it as $\sgn(w)\Id$ for every $w\in W_{L}$. Hence, by \re{stfin}(b), $\St'$ is isomorphic to a direct sum $\St_{L}\oplus V$ with $V^{B}=0$. It remains to show that $\St'$ is generated by its $B$-invariants. But this follows
from \rl{inv} below.
\end{proof}

\begin{Lem} \label{L:inv}
For every smooth representation $V$ of $G$, which is generated by its $I$-invariants,
the $L$-representation $V^{P^+}$ is generated by $B$-invariants.
\end{Lem}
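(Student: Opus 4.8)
For every smooth representation $V$ of $G$ generated by its $I$-invariants, the $L$-representation $V^{P^+}$ is generated by its $B$-invariants.

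The plan is to reduce the assertion to the level of Hecke modules and exploit the transitivity of generation. First I would recall that $V$ is generated by $V^I$ as a $G$-representation precisely when the natural map $\C{H}(G)\otimes_{\C{H}(G,I)} V^I \to V$, $h\otimes v\mapsto h(v)$, is surjective, equivalently when $\C{H}(G)\dt_I\cdot V^I = V$; here $\dt_I$ is the idempotent attached to $I$. Applying the exact functor $v\mapsto \dt_{P^+}\ast v$ (projection onto $V^{P^+}$, which makes sense because $P^+$ is a pro-$p$ group and we work over $\B{C}$) and using that convolution with $\dt_{P^+}$ commutes with the left $G$-action up to the appropriate bimodule bookkeeping, the claim becomes: $V^{P^+}$ is generated, as a module over the Hecke algebra $\C{H}(L,B)\cong\C{H}(P,I)$, by the image of $V^I$. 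Indeed $V^I = (V^{P^+})^B$ under the identification $I/P^+\cong B$, and $\dt_I\ast(\cdot)\ast\dt_I$ on $\C{H}(P)$ descends to the averaging idempotent $e_B$ on $\C{H}(L)$. So the heart of the matter is the purely finite-group statement: if $W:=V^{P^+}$ is a smooth (hence, being $P^+$-fixed vectors, an honest) $L$-module with $\C{H}(L)\cdot e_B W = W$ coming from $G$-generation, then $\C{H}(L)\cdot W^B = W$.

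Next I would make the passage from $G$ to $P$ precise. The key point is that generation of $V$ by $V^I$ over $G$ implies generation of $V$ by $V^I$ over $P$ is \emph{false} in general, so one cannot simply restrict to $P$; instead one must track how elements of $G$ move $I$-fixed vectors into $V^{P^+}$. The clean way: write $v\in V^{P^+}$ as $v=\sum_k h_k(v_k)$ with $v_k\in V^I$ and $h_k\in\C{H}(G)$; average on the left by $\dt_{P^+}$ to get $v=\sum_k (\dt_{P^+}\ast h_k\ast\dt_I)(v_k)$; now $\dt_{P^+}\ast h_k\ast\dt_I$ is supported on $P^+\backslash G/I$ and $P^+$-left-invariant, so it is a combination of the measures $\dt_{P^+ g I}$. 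Thus it suffices to treat each $v=\dt_{P^+ g I}(v_0)$ with $v_0\in V^I$, and to show this lies in $\C{H}(L)\cdot W^B$. Using the Iwahori--Bruhat/Iwasawa-type decomposition relative to the parahoric $P$ — every double coset $P^+ g I$ meets $P^+ w I$ for $w$ ranging over coset representatives of $W_P\backslash \wt W$ where $\wt W$ is the Iwahori--Weyl group — one reduces $g$ to such a $w$. For $w\in W_P$ the vector $\dt_{P^+ w I}(v_0)$ is $\dt_{P^+}$ of something in $P$, hence literally in $\C{H}(L)\cdot W^B$; for $w\notin W_P$ one uses that $\dt_{P^+ w I}$ factors as $\dt_P\ast(\text{something})$, killing the vector unless it was already accounted for, or more robustly one inducts on the length of $w$ relative to $W_P$, using the braid/quadratic relations in $\C{H}(G,I)$ to push the extra $I$-double cosets to the $P$-side, exactly as in the Schneider--Stuhler / Meyer--Solleveld analysis that motivated the paper.

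I expect the main obstacle to be this last reduction: controlling which $I$-double cosets in $G$ can contribute to a $P^+$-fixed vector, i.e.\ showing that $\dt_{P^+}\ast\C{H}(G)\ast\dt_I$ is, as a left $\C{H}(P)$-module (equivalently $\C{H}(L)$-module via $\dt_P$), generated by $\dt_{P^+}\ast\C{H}(P)\ast\dt_I = e_B\C{H}(L)$ together with elements that do not increase the relevant support. The cleanest framing is: $\dt_{P^+}\ast\C{H}(G)\ast\dt_I \cong \C{H}(L)\otimes_{e_B\C{H}(L)e_B}(\text{stuff})$, but really one only needs the one-sided statement, which follows from the fact that $G = P\cdot I$ is \emph{not} true but $G/I$ is covered by translates of $P/I$-type cells in a way compatible with the $W_P\backslash\wt W$ combinatorics — equivalently, from the standard fact that $\C{H}(G,I)$ is free as a left $\C{H}(P,I)$-module on the $T_w$ with $w$ minimal in $W_P w$. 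Granting that module freeness (a standard Iwahori--Hecke algebra fact), the proof closes: apply $e_B$-averaging to the generating relation, observe that the minimal-length representatives $w$ with $W_P w\neq W_P$ satisfy $\dt_{P^+ w I}(V^{P^+})\subseteq$ the span of the $W_P$-part by a support/parity argument, and conclude $W=\C{H}(L)\cdot W^B$. So the write-up will (i) set up the $e_B$-averaging, (ii) invoke the left-$\C{H}(P,I)$-freeness of $\C{H}(G,I)$, and (iii) handle the non-$W_P$ representatives by a short support argument; step (iii) is where the only real work lies.
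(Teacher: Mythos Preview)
Your reduction in steps (i)--(ii) is fine: writing $v\in V^{P^+}$ as a sum of terms $\dt_{P^+}\ast h\ast\dt_I$ applied to vectors in $V^I$, and then decomposing along $P^+\backslash G/I$, is exactly the right beginning. But step (iii), which you correctly identify as ``where the only real work lies,'' is not actually carried out, and the two mechanisms you suggest for it do not work as stated. The claim that $\dt_{P^+ w I}$ factors as $\dt_P\ast(\text{something})$ for $w\notin W_P$ is false in general: $\dt_P\ast h$ is left-$P$-invariant, whereas $\dt_{P^+ w I}$ is only left-$P^+$-invariant, and $P^+wI\neq PwI$ typically. The alternative ``induct on length using braid/quadratic relations'' is not a proof either: those relations manipulate $T_w$'s inside $\C{H}(G,I)$, but you need to control $\dt_{P^+}\ast T_w$ acting on $V^I$ as an element of the $\C{H}(L)$-module $V^{P^+}$, and the relations give you no purchase on that without further structural input. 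The freeness of $\C{H}(G,I)$ over $\C{H}(P,I)$ tells you how to decompose, not that the non-$W_P$ pieces land back in $\C{H}(L)\cdot V^I$. So as written the argument is circular: the ``short support argument'' you defer to is the entire content of the lemma.

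The paper takes a different and more concrete route that sidesteps this. It immediately reduces to the universal case $V=C_c^{\infty}(I\backslash G)$, decomposes this as a $P$-module into pieces $V_g=\B{C}[I\backslash IgP]\cong\B{C}[(P\cap g^{-1}Ig)\backslash P]$, and then shows that $V_g^{P^+}\cong\B{C}[B'\backslash L]$ for some Borel $B'\subseteq L$. The key point is the purely building-theoretic fact that $J:=P^+(P\cap g^{-1}Ig)$ is an Iwahori subgroup of $P$: one picks an apartment containing the simplex $\si$ of $P$ and the chamber $\tau$ of $g^{-1}Ig$, takes a point $z$ on the geodesic $(x,y]$ close to $x\in\si$, and checks via affine roots that $J=G_{\wt\si}$ for the chamber $\wt\si\ni z$. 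Once each $V_g^{P^+}$ is $\B{C}[B'\backslash L]$, generation by $B$-invariants is immediate. If you want to salvage your Hecke-module approach, you will end up needing exactly this geometric input (or an equivalent statement about $P^+\backslash G/I$), so you may as well argue directly with the building.
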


 \begin{proof}
 Since $V$ is generated by $V^I$, it is  a quotient of a direct sum of the $C_c^{\infty}(I\bs G)$'s. Thus,
it is enough to prove the assertion in the case $V=C_c^{\infty}(I\bs G)$.
In this case the space $V$, considered as a $P$-representation, decomposes as a sum
$V=\sum_{g\in G}V_g$, where $V_g:=\B{C}[I\bs IgP]$. Thus it remains to show that each $V_g^{P^+}$ is generated by its
$B$-invariants. It suffices to show that $V_g^{P^+}\cong \B{C}[B'\bs L]$, where $B'=\b'(\fq)$
for some Borel subgroup $\b'\subseteq\L$.

Notice that we have a natural isomorphism of $P$-representations $V_g\cong\B{C}[P\cap I'\bs P]$, where $I':=g^{-1}Ig$. Therefore
$V_g^{P^+}\cong\B{C}[P^+(P\cap I')\bs P]$,  so it suffices to show that $J:=P^+(P\cap I')\subseteq P$
is an Iwahori subgroup (compare \re{iwpar}).

\begin{Emp} \label{E:notinv}
{\bf Notation.} For every $\si\in[\CX]$, choose $x\in\si$ and define $G_{\si}:=G_{x,0}$ (use \rl{mpfil}).
\end{Emp}

Let $\si\in [\CX]$ (resp. $\tau\in [\CX]$) be the polysimplex such that $P=G_{\si}$ (resp. $I'=G_{\tau}$).
Choose an apartment $\CA\supseteq\si,\tau$ of $\CX$ and points $x\in \si$ and
$y\in \tau$. Since $I'$ is an Iwahori subgroup, $\tau$ is a chamber. Hence  we have $\psi(y)\neq 0$ for every $\psi\in\Psi(\CA)$.
Therefore every point $z\in (x,y]$, close to $x$, lies in some chamber $\wt{\si}\in[\CA]$ such that $\si\preceq\wt{\si}$.
We claim that $J=G_{\wt{\si}}$, that is, $G_{\wt{\si}}=G_{\si,0^+}(G_{\si}\cap G_{\tau})$.

By \rp{mp}, the subgroup $G_{\wt{\si}}$ is generated by $M_0$ and the affine root subgroups $U_{\psi}$ for $\psi\in\Psi(\CA)$ satisfying $\psi(\wt{\si})>0$. Since  $\si\preceq\wt{\si}$, we have $\psi(\wt{\si})>0$ if and only if we have either
$\psi(\si)>0$ or $\psi(\si)=0$ and $\psi(\wt{\si})>0$. Thus, to show the inclusion
$G_{\wt{\si}}\subseteq G_{\si,0^+}(G_{\si}\cap G_{\tau})$, we have to check that for every $\psi\in\Psi(\CA)$, satisfying $\psi(\si)=0$ and $\psi(\wt{\si})>0$, we have $\psi(\tau)>0$. Equivalently, we have to check that for every $\psi\in\Psi(\CA)$,
satisfying $\psi(x)=0$ and $\psi(z)>0$ we have $\psi(y)>0$, which follows from the assumption $z\in (x,y]$.

The converse inclusion is easier. Namely, the inclusion $G_{\si,0^+}\subseteq G_{\wt{\si},0^+}\subseteq G_{\wt{\si}}$ follows from the fact that $\si\preceq\wt{\si}$, while the inclusion $G_{\si}\cap G_{\tau}\subseteq G_{\wt{\si}}$ or, equivalently,
$G_{x}\cap G_{y}\subseteq G_z$ follows from \rl{convex}.
\end{proof}

To prove \rt{steinberg}, we are going to use a result of  Meyer--Solleveld \cite[Prop. 4.1]{MS}, which we are going to formulate now.

\begin{Emp} \label{E:MS}
{\bf Theorem of Meyer--Solleveld} (see \cite[Section 4]{MS}).

(a) For every $\si\in [\CX]$, we denote by $G_{\si}^{\dag}\subseteq G$ the stabilizer of $\si$, and let $\sgn_{\si}:G_{\si}^{\dag}\to\{\pm 1\}$ be the orientation character, that is, $\sgn_{\si}(g)=1$ if and only if $g\in G_{\si}^{\dag}$ preserves an orientation of $\si$. In particular, the restriction $\sgn_{\si}|_{G_{\si}}$ is trivial.

(b) Let $n\in\B{N}$, let  $V\in R(G)_{\leq n}$ be a finitely generated admissible representation, and let $\chi_V\in\wh{C}^G(G)$ be its character.
Since $G_{\si}^{\dag}$ normalizes $G_{\si,n^+}$, it acts on the space of invariants $V^{G_{\si,n^+}}$.

(c) A result of Meyer--Solleveld \cite[Prop 4.1]{MS} asserts that for every compact open subgroup $K\subseteq G$, function $f\in C_c^{\infty}(K)$, Haar measure $dg$ on $G$ and sufficiently large $K$-invariant finite subcomplex $\Sigma\in\T$, we have the equality
\begin{equation} \label{Eq:ss}
\chi_{V}(fdg)=\int_{g\in K} f(g)\left(\sum_{\si\in\Sigma\,|g\in G_{\si}^{\dag}}(-1)^{\dim\si}\sgn_{\si}(g)\Tr(g,V^{G_{\si,n^+}})\right)dg.
\end{equation}
\end{Emp}

\begin{Emp} \label{E:comparison}
{\bf Remark.} Note that there is a lot of similarity between formula \form{ss} of Meyer--Solleveld and our \rt{explicit}. However
we don't know whether one of these results formally implies the other (compare also remark \re{comp}).
\end{Emp}

\begin{Emp}
\begin{proof}[Proof of \rt{steinberg}]
We have to show that the equality
\begin{equation} \label{Eq:eqst}
E_0(f)=\chi_{\St_G}(f\mu^{\I^+})
\end{equation}
is valid for every $f\in C_c^{\infty}(G_{0^+})$.
Moreover, since $E_0$ and $\chi_{\St_G}$ are $\Ad G$-invariant and
$G_{0^+}=(\Ad G)(I^+)$,  it is enough to prove \form{eqst} for $f\in C^{\infty}_c(I^+)$.

To calculate the right-hand side of \form{eqst}, we apply formula \form{ss} for $n=0$, $V=\St_G$, $K=I^+$ and $dg=\mu^{I^+}$.
We set $G_{\si}^+:=G_{\si,0^+}$, $L_{\si}:=L_{G_{\si}}$, and let $U_{\si}\subseteq L_{\si}$ be a maximal unipotent subgroup.

Notice that for every $g\in I^+\cap G_{\si}^{\dag}$, we have $g\in G_{\si}$, and the image $[g]\in L_{\si}$ is unipotent. In particular, $\sgn_{\si}(g)=1$. Since the space of invariants $\St_G^{G_{\si}^+}$ is the Steinberg representation  of $L_{\si}$ (by \rp{restr}),
we conclude from \re{stfin}(c) that for every $g\in I^+\cap G_{\si}^{\dag}$,
the trace $\Tr(g,\St_G^{G_{\si}^+})$ equals $|U_{\si}|1_{G_{\si}^+}(g)$.

Hence, by \form{ss}, the right-hand side of \form{eqst} equals
\begin{equation} \label{Eq:ss1}
\int_{g\in I^+} f(g)\left(\sum_{\si\in\Sigma}(-1)^{\dim\si}|U_{\si}|1_{G^+_{\si}}(g)\right)\mu^{I^+}
\end{equation}
for every sufficiently large $I^+$-invariant subcomplex $\Sigma\in\T$. Using the identity $|U_{\si}|1_{G^+_{\si}}\mu^{I^+}=\dt_{G^+_{\si}}$, the expression \form{ss1} equals
$\int_{g\in I^+}f(g)E_0^{\Si}=E_0^{\Si}(f)$. This implies that $\chi_{\St_G}(f\mu^{\I^+})=E_0^{\Si}(f)$,
hence equality \form{eqst} follows from \rco{explicit}(a).
\end{proof}
\end{Emp}

\section{Stability}

\noindent In this section we prove \rco{stable} and \rt{stable}.

\begin{Emp} \label{E:setupst}
{\bf Set up.} (a) We fix a non-zero translation invariant top degree differential form $\om_\G$ on $\G$ and such a form $\om_\TT$ on $\TT$ for each maximal torus $\TT\subseteq\G$. Then $\om_\G/\om_\TT$ is a top degree translation invariant differential form on $\G/\TT$, hence it defines a $G$-invariant measure $|\om_\G/\om_\TT|$ on $(\G/\TT)(F)$. Also $\om_\G$ defines a Haar measure $|\om_\G|$ on $G$.

(b) Let $\X$ be either $\G$, or $\CG$, or $\CG^*$, where $\CG$ denotes the Lie algebra $\g$ viewed as an algebraic variety, and similarly for $\CG^*$. Then $\X$ is equipped with an adjoint action of $\G$. We denote by
$\X^{\sr}\subseteq \X$ the set of {\em strongly regular semisimple} elements of $\X$, that is, the set of all $x\in \X$ such that the stabilizer
$\G_x:=\Stab_\G(x)\subseteq \G$ is a maximal torus. Then $\X^{\sr}\subseteq\X$ is an open subvariety.

(c) We assume that $\X^{\sr}\neq\emptyset$. Note that this is always holds, if
$\X=\G$ or the characteristic of $F$ is not two (see \re{weyl} below and compare \cite[Prop. 2.3]{GG}).

(d) We set $X:=\X(F)$ and $X^{\sr}:=\X^{\sr}(F)$. Then the subset $X^{\sr}\subseteq X$ is dense.
\end{Emp}

\begin{Emp} \label{E:weyl}
{\bf Remark.} Let $\TT\subseteq\G$ be a maximal torus, set $\t:=\Lie\TT$, and let $\t^*$ be the linear dual of $\t$. Then it is standard that $\G^{\sr}\neq\emptyset$ (resp. $\g^{\sr}\neq\emptyset$, resp. $(\g^*)^{\sr}\neq\emptyset$) if and only if the Weyl group
$W=W(\G,\TT)$ acts faithfully on $\TT$ (resp. $\t$, resp. $\t^*$). Then the assertions for $\TT$ and in the  characteristic zero case follow from the fact that $W(\G,\TT)$ acts faithfully on $X_*(\TT)$.

On the other hand, in characteristic $p>0$ the assertion for  $\t$ (resp. $\t^*$) is equivalent to the assertion
that $W$ acts faithfully on $X_*(\TT)/pX_*(\TT)$ (resp. $X^*(\TT)/pX^*(\TT)$).

We claim that both assertions hold if $p>2$. Indeed, let $w\in W$ acts trivially on $X_*(\TT)/pX_*(\TT)$. Equip the vector space $V:=X_*(\TT)\otimes\B{R}$ with a $W$-invariant norm $||\cdot||$. Then the endomorphism $A:=\frac{w-1}{p}\in \End(V)$ satisfies $A(X_*(\TT))\subseteq X_*(\TT)$ and $||A(v)||\leq \frac{2}{p}||v||<||v||$ for every $v\in V$. Since $A$ is semisimple, we conclude that $A=0$, hence $w=1$. The proof of the assertion for $X^*(T)$ is identical.
\end{Emp}

\begin{Emp} \label{E:stable}
{\bf Stability.} Suppose that we are in the situation of \re{setupst}.

(a) For every $x\in X^{\sr}$ we have a natural map $a_x:\G/\G_x\to \X:[g]\mapsto g(x)$,
hence a map $(\G/\G_x)(F)\to X^{\sr}$, whose image we call the {\em stable orbit}.

(b) Notice that each stable orbit is closed in $X$, hence we can define an invariant distribution $O^{\st}_x\in D^G(X)$ by the formula
$O^{\st}_x(f):=\int_{(\G/\G_x)(F)}a_x^*(f)|\om_\G/\om_{\G_x}|$ for every smooth function with compact support
$f\in C_c^{\infty}(X)$. The distribution $O^{\st}_x$ is called the {\em stable orbital integral}. It is defined uniquely up to a constant.

(c) A function $f\in C_c^{\infty}(X)$ is called {\em unstable}, if  $O^{\st}_x(f)=0$ for every $x\in X^{\sr}$.
An invariant distribution $F\in D^G(X)$ is called {\em stable}, if $F(f)=0$ for every unstable $f\in  C_c^{\infty}(X)$.
An invariant generalized function $\chi\in\wh{C}^G(X)$ is called {\em stable}, if $\chi dx\in D^G(X)$ is stable  for a Haar measure
$dx$ on $X$.

(d) We call an $\Ad G$-equivariant open and closed subset $Y\subseteq X$ {\em stable}, if $Y\cap X^{\sr}$ is a union of stable orbits
(see (a)).
\end{Emp}

\begin{Emp} \label{E:stsubs}
{\bf Examples.} (a) If $Y\subseteq X$ is a stable subset (see \re{stable}(d)), then the characteristic function $1_Y\in \wh{C}^G(X)$
is stable.

Indeed, we want to show that for every unstable function $f\in C_c^{\infty}(G)$ we have
$\int_{G}(f\cdot 1_Y)dx=0$. Since $Y$ is stable, the function
$f\cdot 1_Y\in C_c^{\infty}(X)$ is unstable. Thus it remains to check that for every unstable
function $f\in C_c^{\infty}(X)$ we have $\int_{G}fdx=0$. This follows from
the fact $X^{\sr}\subseteq X$ is dense.

(b) The character $\chi_{\St_G}$ of the Steinberg representation is stable.

Indeed, by \re{stpad}(b) it remains to show
that each character $\chi_{\Ind_{Q}^{G}(1_{Q})}$ is stable. This follows from the fact that
the constant function $1_Q$ is stable (by (a)) and that the parabolic induction preserves stability (see   \cite[Cor 6.13]{KV3}).
\end{Emp}

The following lemma will be proven in Appendix \ref{S:cong} (see \re{disc}(b)).

\begin{Lem} \label{L:clos}
For every $r\in\B{R}_{\geq 0}$, the open $\Ad G$-invariant subsets $G_{r^+}\subseteq G$, $\g_{r^+}\subseteq \g$ and
$\g^*_{-r}\subseteq \g$ are closed and stable.
\end{Lem}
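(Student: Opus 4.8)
The statement has two parts: the subsets $G_{r^+}$, $\g_{r^+}$, $\g^*_{-r}$ are (i) closed in $G$, $\g$, $\g^*$ respectively, and (ii) stable in the sense of \re{stable}(d). The closedness is essentially a known fact (this is why the lemma cites \cite[Cor 3.7.21]{ADB} and \cite[Cor 3.4.3]{ADB} at the points where these subsets are introduced), so the substance of the proof is stability. Recall that by definition we must show that each of $G_{r^+}\cap G^{\sr}$, $\g_{r^+}\cap\g^{\sr}$, $\g^*_{-r}\cap(\g^*)^{\sr}$ is a union of \emph{stable orbits}, i.e. closed under the equivalence $x\sim x'$ whenever $x' = g(x)$ for some $g\in\G(\ov F)$ (equivalently $x,x'$ lie in the same $\G(\ov F)$-orbit and are both $F$-rational). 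So the assertion is: if $x\in X^{\sr}$ lies in the Moy--Prasad locus of depth $>r$ (resp. $\geq -r$ on the dual side) and $x'\in X^{\sr}(F)$ is stably conjugate to $x$, then $x'$ lies in the same locus.

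\textbf{Main step: stability via an invariant characterization of the depth loci.} The key point is that membership in $G_{r^+}$, $\g_{r^+}$, $\g^*_{-r}$ can be detected by a $\G(\ov F)$-invariant condition on the element, not referring to the building of $\G$ over $F$ but to that of $\G$ over $F^{\nr}$ (or $\ov F$). Concretely, for $a\in\g^{\sr}$ with centralizer the maximal torus $\TT=\G_a$, one has $a\in\g_{x,r^+}$ for some $x\in\CX(\G)$ if and only if $a\in\g_{x,r^+}$ for some $x$ in the (single) point $\CX(\TT_{F^{\nr}})$ sitting inside $\CX(\G_{F^{\nr}})$, which in turn is expressible purely in terms of the valuations of $d\lambda(a)$ for characters $\lambda$ of $\TT$ — an intrinsic, Galois- and conjugation-equivariant condition (cf.\ \re{split}(a), \re{filtor}, \re{notmp}). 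I would therefore proceed as follows. First, reduce to the case where $\G$ is quasi-split and even adjoint (using \re{parah1}, \re{remmp}, and the product decomposition \re{rembt}(b)), and reduce the group case to the Lie algebra case by an $r$-logarithm (available on $G_{r^+}$, see the discussion before \rt{stable}) or, more elementarily, by a direct parallel argument. Second, for $a\in\g^{\sr}$ let $\TT=\G_a$; then $a\in\mm'_{r^+}$ for $\mm'=\Lie\TT$, and I would show $a\in\g_{r^+}$ iff $a\in \t_{r^+}$ computed in $\TT$, using \rp{mp} applied to a maximal split torus $\S\subseteq\TT$ and the fact that $a$ lies in the ``center" part $\mm_{r^+}$ of the Moy--Prasad decomposition (its root components vanish since $a$ is regular semisimple with centralizer $\TT$). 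Third, observe that the condition ``$a\in\t_{r^+}$" is $\val_F(d\lambda(a))> r$ for all $\lambda\in X^*(\TT)$ after extending to the splitting field — and this is invariant under replacing $(a,\TT)$ by $(g(a), g\TT g^{-1})$ for $g\in\G(\ov F)$, hence constant on stable orbits. The dual statement for $\g^*_{-r}$ follows by the same analysis applied to the coadjoint action, the perfect pairing, and the orthogonality $\g^*_{x,-r}=(\g_{x,r^+})^\perp$.

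\textbf{Closedness.} For completeness I would also record the proof that these subsets are open and closed. Openness is clear since each is a union of the open subgroups/lattices $G_{x,r^+}$, $\g_{x,r^+}$, $\g^*_{x,-r}$. For closedness, one way is to invoke \rl{convex}: the set $\CX(a,r)$ of $x$ with $a\in\g_{x,r^+}$ is convex and, when nonempty, one shows it is actually all of $\CX$ or else a sub-building coming from a Levi; combined with \cite[Cor 3.4.3]{ADB} (which is exactly this statement) one concludes $\g_{r^+}$ is closed. Since the lemma is explicitly deferred to Appendix~\ref{S:cong} via \re{disc}(b), I would present the above as the structure and fill in the valuation bookkeeping there.

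\textbf{Expected main obstacle.} The delicate point is the passage from ``there exists $x\in\CX(\G)$ with $a\in\g_{x,r^+}$" to the intrinsic torus condition — i.e.\ showing that the building point can be taken in the image of $\CX(\TT_{F^{\nr}})$, which uses that a strongly regular semisimple $a$ ``lives on its own maximal torus'' inside the Bruhat--Tits building, together with the compatibility of Moy--Prasad filtrations with unramified base change (\re{remmp}(a), \rl{adler}) and with passage to the centralizer torus. Handling the finitely many ``bad'' groups and wild ramification of $\TT$ cleanly (where \rl{adler} does not directly apply) is where the argument needs the most care; there one falls back on the direct definition of $T_r$ via the Kottwitz homomorphism and the splitting field, as in \re{filtor}.
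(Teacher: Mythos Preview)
Your approach to stability has a genuine gap. You propose to show, for $a\in\g^{\sr}$ with centralizer $\TT=\G_a$, the equivalence ``$a\in\g_{r^+}$ iff $a\in\t_{r^+}$'' by invoking \rp{mp} for a maximal split torus $\S\subseteq\TT$. But \rp{mp} requires $\S$ to be a maximal split torus of $\G$, and there is no reason the centralizer of an arbitrary strongly regular element contains one: an elliptic regular element in $\GL_2$ has an anisotropic-mod-center centralizer. So the decomposition $a=a_0+\sum_\al a_\al$ with $a_\al=0$ simply is not available, and the sketched argument does not go through. The equivalence you want is in fact a substantial theorem (of Adler--DeBacker type on depth matching with the centralizer), not a bookkeeping consequence of \rp{mp}. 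Your reduction of the group case to the Lie algebra case via an $r$-logarithm is also not available in general: the paper only produces $r$-logarithms under the hypothesis that $p$ is very good (\rco{rlog}, \rl{qlog}), so this cannot be used to prove \rl{clos} without restriction on $p$.

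The paper's proof of stability is both shorter and avoids these issues entirely. Given stably conjugate $g,g'\in G^{\sr}$ with $g\in G_r$, one notes that since $F^{\nr}$ has cohomological dimension $1$ the elements are already $\G(F^{\flat})$-conjugate for some finite unramified $F^{\flat}/F$; then $g'\in G^{\flat}_r$ because $G^{\flat}_r$ is $\Ad G^{\flat}$-invariant. The set $\CX^{\flat}(g',r)=\{x\in\CX^{\flat}:g'\in G^{\flat}_{x,r}\}$ is therefore nonempty, and it is convex by \rl{convex} and $\Gal(F^{\flat}/F)$-stable since $g'\in G$. The Bruhat--Tits fixed point theorem then produces a Galois-fixed point, which by \re{bc} and \re{remmp}(a) lies in $\CX(g',r)$, so $g'\in G_r$. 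No analysis of the centralizer torus is needed.

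For closedness the paper also does more than defer to \cite{ADB}: it gives an independent proof (this is emphasized in the remark after \rl{clos}). After handling $G_0$ via the characterization of compact elements, closedness of $G_r$ for $r>0$ is reduced to showing $G_x\cap G_r=G_x\cap G_{\Si,r}$ for all sufficiently large $\Si\in\T_m$, and this is proved by exactly the same Euler--Poincar\'e stabilization machinery (\rl{equal}, \rl{equiv}, \rco{iwahori}) that drives \rp{stab}. So the combinatorics of Sections 4--5 is reused here rather than being replaced by an appeal to the literature.
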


\begin{Emp}
{\bf Remark.}
The fact that $G_{r^+}\subseteq G$ and  $\g_{r^+}\subseteq \g$ are closed was also proven by Adler and DeBacker (see \cite[Cor 3.4.3 and Cor 3.7.21]{ADB}).
Our proof is completely different.
\end{Emp}

\begin{Emp} \label{E:pfcstable}
\begin{proof}[Proof of \rco{stable}]
We have to show that for every unstable $f\in C_c^{\infty}(G)$, we have $E_0(f)=0$.

Since $G_{0^+}\subseteq G$ is open and closed
(by \rl{clos}), $f$ decomposes as $f=f'+f''$, where $f':=f\cdot 1_{G_{0^+}}$
and $f'':=f\cdot 1_{G\sm G_{0^+}}$. Since $f$ is unstable, while $G_{0^+}\subseteq G$ is stable (by \rl{clos}),
we conclude that $f'$ is unstable.

Since $E_0$ is supported on $G_{0^+}$ (by \rco{explicit}), and  $f''$ is supported on $G\sm G_{0^+}$,
we conclude that $E_0(f'')=0$. Therefore $E_0(f)=E_0(f')$ equals $\chi_{\St_G}(f'\mu^{I^+})$
(by \rt{steinberg}). Hence $E_0(f)=\chi_{\St_G}(f'\mu^{I^+})=0$, because $\chi_{\St_G}$ is stable
(see \re{stsubs}(b)), while $f'$ is unstable.
\end{proof}
\end{Emp}

\begin{Cor} \label{C:wald}
Assume that the characteristic of $F$ is different from two, and that $\G$ admits an $r$-logarithm. Then the invariant distribution $E_r$ is stable.
\end{Cor}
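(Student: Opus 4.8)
The plan is to combine \rco{lie}, which transports $E_r$ to $\C{E}_r$ via an $r$-logarithm, with the fact that the Fourier transform preserves stability together with the stability of the characteristic function $1_{\g^*_{-r}}$. First I would reduce the stability of $E_r$ to the stability of $E_r|_{G_{r^+}}$. Since $G_{r^+}\subseteq G$ is open, closed and stable (by \rl{clos}), any unstable $f\in C_c^\infty(G)$ decomposes as $f=f'+f''$ with $f':=f\cdot 1_{G_{r^+}}$ and $f'':=f\cdot 1_{G\sm G_{r^+}}$; the subset $G_{r^+}$ being stable forces $f'$ to be unstable, while $E_r$ is supported on $G_{r^+}$ (by \rco{explicit}(b)), so $E_r(f'')=0$ and hence $E_r(f)=E_r(f')=(E_r|_{G_{r^+}})(f')$. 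So it suffices to show $E_r|_{G_{r^+}}$ is a stable distribution on $G_{r^+}$, in the sense that it kills every unstable function supported on $G_{r^+}$.

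Next I would use the $r$-logarithm $\C{L}:G_{r^+}\isom\g_{r^+}$ to transport the problem to the Lie algebra. Since $\C{L}$ is an $\Ad G$-equivariant homeomorphism carrying $G_{x,r^+}$ onto $\g_{x,r^+}$, it carries strongly regular semisimple elements of $G_{r^+}$ to strongly regular semisimple elements of $\g_{r^+}$ and respects the stabilizers (maximal tori), hence it matches up stable orbits on the two sides and therefore identifies unstable functions supported on $G_{r^+}$ with unstable functions supported on $\g_{r^+}$. Consequently $E_r|_{G_{r^+}}$ is stable on $G_{r^+}$ if and only if its pushforward $\C{L}_!(E_r|_{G_{r^+}})$ is stable on $\g_{r^+}$; by \rco{lie} this pushforward equals $\C{E}_r|_{\g_{r^+}}$. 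Since $\g_{r^+}\subseteq\g$ is open, closed and stable (again \rl{clos}) and $\C{E}_r$ is supported on $\g_{r^+}$ (by \rp{lie}), the same decomposition argument as in the first paragraph reduces us to showing that $\C{E}_r\in D^G(\g)$ is stable as a distribution on all of $\g$.

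Finally I would invoke the theorem of Waldspurger \cite{Wa,KP} that the Fourier transform $\C{F}:D^G(\g)\to\wh C^G(\g^*)$ carries stable distributions to stable distributions (and conversely); here is where the hypothesis that the characteristic of $F$ is different from two enters, guaranteeing $\g^{\sr}\neq\emptyset$ and $(\g^*)^{\sr}\neq\emptyset$ so that the notion of stability on $\g$ and $\g^*$ is nonvacuous and Waldspurger's theorem applies. By definition $\C{E}_r=\C{F}^{-1}(1_{\g^*_{-r}})$, so it is enough to know that $1_{\g^*_{-r}}\in\wh C^G(\g^*)$ is stable; but $\g^*_{-r}\subseteq\g^*$ is open, closed and stable by \rl{clos}, so $1_{\g^*_{-r}}$ is stable by \re{stsubs}(a). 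The main obstacle in this chain is not any single step but marshalling the inputs correctly: one must check that the $r$-logarithm genuinely preserves the stable-conjugacy structure (equivariance plus the torus-stabilizer condition give this), and one must be careful that ``stable'' for a distribution supported on an open-closed stable subset is equivalent to stability of the extension by zero — which is exactly the content of the decomposition argument used twice above and of \re{stsubs}(a).
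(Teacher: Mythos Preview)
Your proposal is correct and follows essentially the same route as the paper: reduce to $G_{r^+}$ using \rco{explicit}(b) and \rl{clos}, transport to the Lie algebra via the $r$-logarithm and \rco{lie}, and conclude by applying Waldspurger's theorem \cite{Wa,KP} to $\C{E}_r=\C{F}^{-1}(1_{\g^*_{-r}})$, where $1_{\g^*_{-r}}$ is stable by \re{stsubs}(a) and \rl{clos}. The paper's argument is slightly more compressed---it asserts directly that $\C{L}_!(f')$ is unstable and that $E_r(f')=\C{E}_r(\C{L}_!(f'))$ without your intermediate reduction to $\C{E}_r|_{\g_{r^+}}$---but the logical structure and the ingredients invoked are identical.
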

\begin{proof}
By Example \re{stsubs}(a) and \rl{clos}, the invariant generalized function  $1_{\g^*_{-r}}\in\wh{C}^G(\g^*)$ is stable. Hence, by
a generalization \cite{KP} of a theorem of Waldspurger \cite{Wa}, the distribution $\C{E}_r=\C{F}^{-1}(1_{\g^*_{-r}})$ is stable.

The rest of the argument is similar to \re{pfcstable}. For every unstable function  $f\in C_c^{\infty}(G)$, functions
$f':=f\cdot 1_{G_{r^+}}$ and $\C{L}_!(f')\in C_c^{\infty}(\g)$ are unstable. On the other hand, we have
$E_r(f)=E_r(f')$, because $E_r$ supported on $G_{r^+}$, and $E_r(f')=\C{E}_r(\C{L}_!(f'))$ by \rco{lie}. Hence
$E_r(f)=\C{E}_r(\C{L}_!(f'))=0$, because $\C{E}_r$ is stable.
\end{proof}

\begin{Emp} \label{E:remwald}
{\bf Remarks.}
(a) Formally speaking, the theorem of Waldspurger and its generalization in \cite{KP} are only proved when $F$ is of characteristic zero.
But the arguments can be extended to local fields of positive odd characteristic.

(b) In all known cases when $G$ admits an $r$-logarithm, the Lie algebra admits a non-degenerate quadratic form. In this case,
we can identify $\g^*$ with $\g$, thus the original theorem of Waldspurger suffices.

(c) When $r\in\B{N}$, we can prove \rco{wald} without the theorem of Waldspurger. Namely, arguing as in the second paragraph of the proof of \rco{wald}, we see that $E_r$ is stable if and only if $\C{E}_r$ is stable. Hence, by \rco{stable},  it suffices to show that
$\C{E}_r$ is stable if and only if $\C{E}_0$ is stable.

Let
$\mu_r:\g\to\g$ be the homothety map $a\mapsto \varpi^r a$. Since $r\in\B{N}$, for every $x\in\CX$, we have the equality
$\g_{x,r^+}=\varpi^r \g_{x,0^+}$ (see \re{lattice}(b)). Then the pullback $\mu_r^*:D(\g)\to D(\g)$ satisfies
$\mu_r^*(\dt_{\g_{x,r^+}})= \dt_{\g_{x,0^+}}$ for all $x\in \CX$, hence
$\mu_r^*(\C{E}^{\Si}_{r})=\C{E}_{0}^{\Si}$ for all $\Si\in\T_m$. Thus $\mu_r^*(\C{E}_{r})=\C{E}_{0}$ by \rp{lie}.
Since $\mu_r^*$ maps stable distributions to stable distributions, the assertion follows.
\end{Emp}

\begin{Emp} \label{E:verygood}
{\bf (Very) good primes.} (a) Let $\G^{\sc}$ be the simply connected covering of the derived group of $\G$. Then $\G^{\sc}$ decomposes as a product $\G^{\sc}=\prod_i \R_{F_i/F}\H_i$, where each $F_i/F$
is a finite separable extension, $\H_i$ is an absolutely simple algebraic group over $F_i$,
and $\R_{F_i/F}$ denotes the Weil restriction of scalars. We denote by $\H^*_i$ the quasi-split
inner form of $\H_i$ and by $F_i[\H_i^*]$ the splitting field of $\H_i^*$.

(b) We say that $p$ is {\em good} for $\G$, if either $p>5$, or

$\bullet$ $p=5$ and none of the $\H_i$'s is of type $E_8$, or

$\bullet$ $p=3$, each of the $\H_i$'s is of types $A-D$ and satisfies $[F_i[\H_i^*]:F_i]\leq 2$.

(c) We say that $p$ is {\em very good} for $\G$, if $p$ is good, and $p$ does
not divide $n$, if some of the $\H_i$'s is of type $A_n$.
\end{Emp}

The following assertion is an immediate consequence of Lemmas \ref{L:qlogo} and \ref{L:qlog} from Appendix
\ref{S:qlog}.

\begin{Cor} \label{C:rlog}
If $p$ is very good for $\G$, then $\G^{\sc}$ admits an $r$-logarithm for every $r\in\B{R}_{\geq 0}$.
\end{Cor}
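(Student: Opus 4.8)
The plan is to deduce the corollary from the theory of quasi-logarithms recalled in Appendix \ref{S:qlog}. Recall (following \cite{KV,KV2}) that a \emph{quasi-logarithm} for a connected reductive $F$-group $\H$ with Lie algebra $\h$ is an $\Ad\H$-equivariant morphism of $F$-varieties $\Phi:\H\to\h$ whose differential at the identity is a nonzero scalar multiple of the identity. First I would invoke \rl{qlogo}, which produces such a $\Phi$ for $\G^{\sc}$ under the hypothesis that $p$ is very good for $\G$. The role of the very-good hypothesis is exactly that, by the definition in \re{verygood}, it amounts to numerical conditions on the absolutely simple factors $\H_i$ appearing in the decomposition $\G^{\sc}=\prod_i\R_{F_i/F}\H_i$; since quasi-logarithms are compatible with direct products and with Weil restriction of scalars, it suffices to build one for each $\H_i$, and that is where those conditions on $p$ enter.

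Next I would invoke \rl{qlog}, which describes the restriction of a quasi-logarithm to the Moy--Prasad filtration: for any $\H$ with group of $F$-points $H$ equipped with a quasi-logarithm $\Phi$, the map $\Phi$ restricts to a homeomorphism $H_{x,r^+}\isom\h_{x,r^+}$ for every $x\in\CX(\H)$ and every $r\in\B{R}_{\geq 0}$, and these homeomorphisms are compatible under the inclusions of such subgroups. Applying this to $\H=\G^{\sc}$ (so that $H=G^{\sc}$, $\g^{\sc}:=\Lie\G^{\sc}$, and, by \re{rembt}(a), $\CX(\G^{\sc})=\CX$), and using that $\Phi$ is $\Ad$-equivariant and that every element of $G^{\sc}_{r^+}$ lies in some $G^{\sc}_{x,r^+}$, the local homeomorphisms patch to a single $\Ad G^{\sc}$-equivariant homeomorphism $\C{L}:G^{\sc}_{r^+}\isom\g^{\sc}_{r^+}$ inducing $G^{\sc}_{x,r^+}\isom\g^{\sc}_{x,r^+}$ for all $x\in\CX$. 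This is precisely an $r$-logarithm in the sense of the definition preceding \rco{lie}; as $r\in\B{R}_{\geq 0}$ was arbitrary, the corollary follows.

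I expect the deduction itself to be entirely formal, with all of the substance sitting inside the two appendix lemmas. In \rl{qlogo} one must actually construct the quasi-logarithm — for instance by choosing a faithful representation, embedding $\g^{\sc}$ compatibly into an endomorphism algebra, and projecting a truncated matrix logarithm back to $\g^{\sc}$ — which forces one to invert certain integers attached to the root datum; ``very good'' is exactly the condition making this possible. In \rl{qlog} the main technical point is the estimate that the higher-order terms of $\Phi$ (beyond its identity-like linear part) strictly increase Moy--Prasad depth, so that $\Phi$ is a depth-preserving bijection on each $G^{\sc}_{x,r^+}$ with continuous inverse, together with the verification that these local bijections agree on overlaps and hence glue over the building. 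Granting those appendix results, the corollary stated above is immediate.
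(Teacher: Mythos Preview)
Your approach is correct and matches the paper's: the corollary is stated there as an immediate consequence of \rl{qlogo} and \rl{qlog}. One small imprecision worth fixing: both lemmas crucially involve the condition that the quasi-logarithm be \emph{defined over $\C{O}$} (see \re{qlogo})---\rl{qlogo} produces such a quasi-logarithm and \rl{qlog} requires it as a hypothesis (and already yields the global isomorphism $G^{\sc}_{r^+}\isom\g^{\sc}_{r^+}$ directly, so no separate patching argument is needed).
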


The proof of following assertion is given in Appendix \ref{S:cong} (see \re{pfisog}).

\begin{Lem} \label{L:isog}
Let $\pi:\G'\to \G$ be an isogeny of degree prime to $p$. Then $\pi$ induces homeomorphisms
$G'_{x,r^+}\isom G_{x,r^+}$ and $G'_{r^+}\isom G_{r^+}$ for all $r$ and $x\in\CX(\G')=\CX(\G)$.
\end{Lem}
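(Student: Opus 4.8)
The idea is to peel off, in stages, everything except an explicit congruence computation for an isogeny of split tori, which is the only genuinely delicate point. First, since $\pi$ is an isogeny, $\G'$ and $\G$ have the same adjoint group, so $\CX(\G')=\CX(\G)$ by \re{rembt}(a), compatibly with the two group actions; this gives the asserted identification of buildings. Next, $\Ker\pi$ is a finite central group scheme whose order equals the degree of $\pi$, hence is prime to $p$; in particular $\Ker\pi(\ov F)$ is finite of order prime to $p$, while each $G'_{x,r^+}$ and each $G_{x,r^+}$ is pro-$p$. Thus $\Ker\pi\cap G'_{x,r^+}=\{1\}$, so $\pi$ restricts to a continuous injective homomorphism $G'_{x,r^+}\to G_{x,r^+}$; a continuous injection of a compact group into a Hausdorff group is automatically a homeomorphism onto its (closed) image, and the same then holds for $G'_{r^+}=\cup_x G'_{x,r^+}\to G_{r^+}=\cup_x G_{x,r^+}$, the target being closed by \rl{clos}. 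So it suffices to show $\pi(G'_{x,r^+})=G_{x,r^+}$ for all $x$ and $r$.

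\textbf{Step 2: reduce to a torus.} I would first pass to a finite unramified extension $F^{\flat}/F$ over which $\G$ (equivalently $\G'$, equivalently $\G^{\ad}$) becomes quasi-split, using \rl{f'}. Then $\pi^{\flat}:=\pi_{F^{\flat}}$ is again an isogeny of degree prime to $p$ and is $\Gm^{\flat}:=\Gal(F^{\flat}/F)$-equivariant, and by \re{remmp}(a) one has $G_{x,r^+}=(G^{\flat}_{x,r^+})^{\Gm^{\flat}}$ and similarly for $\G'$; granting surjectivity upstairs, $\pi(G'_{x,r^+})\subseteq G_{x,r^+}$ is immediate, and for the reverse inclusion one lifts $g\in G_{x,r^+}$ to $g'\in G'^{\flat}_{x,r^+}$ and observes $(g')^{-1}\si(g')\in\Ker\pi^{\flat}\cap G'^{\flat}_{x,r^+}=\{1\}$ for all $\si\in\Gm^{\flat}$, so $g'$ is already defined over $F$. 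Thus assume $\G$ quasi-split. Fix a maximal split torus $\S'\subseteq\G'$, put $\S:=\pi(\S')$, $\M':=\Z_{\G'}(\S')$, $\M:=\Z_{\G}(\S)$ (maximal tori) and $\CA:=\CA_{\S'}=\CA_{\S}$. By \rp{mp}(a), $G'_{x,r^+}$ is generated by $M'_{r^+}$ and the affine root subgroups $U'_{\psi}$ with $\psi(x)>r$, and likewise for $G_{x,r^+}$. Since $\Ker\pi$ is central it meets each unipotent root subgroup trivially as a subgroup scheme, so $\pi$ restricts to isomorphisms $\U'_{\al}\isom\U_{\al}$; a central isogeny does not change the valued root datum, so these carry each $U'_{\psi}$ onto $U_{\psi}$ (see \re{root}, \cite{BT1,BT2}). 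Hence everything comes down to showing $\pi(M'_{r^+})=M_{r^+}$ for all $r\ge 0$.

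\textbf{Step 3: the torus case.} Here $\pi\colon\M'\to\M$ is an isogeny of $F$-tori of degree prime to $p$, and only surjectivity is left. First reduce to split tori by passing to the common splitting field and descending as in Step 2 (using \re{filtor}). For split $\M',\M$ the isogeny $\pi$ is given by an integer matrix of determinant prime to $p$, hence extends to an isomorphism of the canonical $\C{O}$-models; in particular $d\pi$ carries $\mm'_{r^+}$ isomorphically onto $\mm_{r^+}$ for every $r\ge 0$. Next, there is a threshold $s_0$, depending only on the ramification of $F$, such that for every $s\ge s_0$ the $p$-adic logarithm is a $\pi$-equivariant homeomorphism $M'_{s^+}\isom\mm'_{s^+}$ and $M_{s^+}\isom\mm_{s^+}$; together with $d\pi(\mm'_{s^+})=\mm_{s^+}$ this gives $\pi(M'_{s^+})=M_{s^+}$ for all $s\ge s_0$. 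Finally, for an arbitrary $r\ge 0$ pick $s\ge\max(s_0,r)$: the map $M'_{r^+}/M'_{s^+}\to M_{r^+}/M_{s^+}$ induced by $\pi$ is an injection of finite $p$-groups (again because $\Ker\pi\cap M'_{r^+}=\{1\}$) between groups of the same order (an elementary count for split tori, matching $|\mm'_{r^+}/\mm'_{s^+}|=|\mm_{r^+}/\mm_{s^+}|$), hence a bijection; therefore $M_{r^+}=\pi(M'_{r^+})\cdot M_{s^+}=\pi(M'_{r^+})\cdot\pi(M'_{s^+})=\pi(M'_{r^+})$, which finishes the proof.

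The hard part will be Step 3: because the splitting field of a torus can be wildly ramified, the logarithm need not converge on all of $M_{0^+}$, so one cannot argue uniformly in $r$ — hence the two-stage device of first getting surjectivity for large $s$ via the logarithm and then propagating it down to every $r$ by the cardinality count on finite $p$-group quotients. A secondary delicate point is the claim in Step 2 that $\pi$ matches up the affine root subgroups, which rests on a central isogeny leaving the valued root subgroups intact.
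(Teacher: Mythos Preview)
Your overall strategy matches the paper's: reduce to the quasi-split case by unramified descent, use the Iwahori-type product decomposition (\rp{mp}, \rl{iwahori}) to split off the unipotent root groups on which $\pi$ is visibly an isomorphism, and reduce everything to an isogeny of tori. The paper additionally proves that $\pi$ induces a bijection $G'_{x,r^+}\cap G'_{y,r^+}\to G_{x,r^+}\cap G_{y,r^+}$, which is what cleanly gives injectivity on the union $G'_{r^+}$; your ``the same then holds'' in Step~1 skips this. It can be patched (two topologically unipotent elements differing by a central element of prime-to-$p$ order must coincide), but as written it is not justified.

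The substantive gap is in Step~3. Your descent ``as in Step~2'' from the splitting field $F^{\flat}$ of the torus only shows that a lift $g'\in M'(F^{\flat})_{(r^{\flat})^+}$ is $\Gal(F^{\flat}/F)$-invariant, i.e.\ lands in $M'(F)\cap M'(F^{\flat})_{(r^{\flat})^+}$. But by the very definition \re{filtor} you invoke, $M'_{r^+}=M'(F^{\flat})_{(r^{\flat})^+}\cap M'_0$ with $M'_0=M'\cap\Ker w_{\M'}$, and when $F^{\flat}/F$ is ramified the parahoric $M'_0$ can be strictly smaller than the maximal compact subgroup of $M'$. You have not shown $g'\in M'_0$. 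The paper closes this precisely: since $\pi(g')\in\Ker w_{\M}$, functoriality of the Kottwitz map forces $w_{\M'}(g')$ into $\Ker\bigl(X_*(\M')_{\Gm_{\nr}}\to X_*(\M)_{\Gm_{\nr}}\bigr)$, which is killed by $\deg\pi$ and hence prime-to-$p$ torsion, while $g'$ lying in a pro-$p$ group forces $w_{\M'}(g')$ to be $p$-power torsion; together these give $w_{\M'}(g')=0$. This is exactly the point where the prime-to-$p$ hypothesis bites for non-split tori, and your write-up misses it --- indeed, you flag the logarithm convergence as ``the hard part'', when the genuine subtlety is this Kottwitz descent.

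As a side remark, for the split case the paper's argument is shorter than your logarithm-plus-cardinality device: each successive quotient $M'_n/M'_{n+1}\to M_n/M_{n+1}$ is an $\B{F}_p$-linear map given by the matrix of $\pi$ reduced modulo $p$, invertible since $\det$ is prime to $p$, and one is done by passing to the limit.
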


\begin{Cor} \label{C:isog}
In the situation of \rl{isog}, the distribution $E_r$ on $G$ is stable if and only if $E_r$ on $G'$ is stable.
\end{Cor}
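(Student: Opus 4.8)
The plan is to deduce the equivalence from the explicit formula \rco{explicit} together with the isogeny compatibility \rl{isog}, along the lines of the proof of \rco{wald}. First I would note that an isogeny does not change the adjoint group, so $\G$ and $\G'$ have the same Bruhat--Tits building, the same refined polysimplices $[\CX_m]$, and the same index set $\T_m$ of \re{tm}. By \rl{isog}, for every $\si\in[\CX_m]$ the map $\pi$ restricts to an isomorphism of compact groups $G'_{\si,r^+}\isom G_{\si,r^+}$, hence pushes $\dt_{G'_{\si,r^+}}$ forward to $\dt_{G_{\si,r^+}}$; summing, $\pi_!$ carries the element $(E'_r)^{\Si}=\sum_{\si\in\Si}(-1)^{\dim\si}\dt_{G'_{\si,r^+}}$ attached to $\G'$ to the element $E_r^{\Si}$ attached to $\G$, for every $\Si\in\T_m$. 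Again by \rl{isog}, $\pi$ restricts to a homeomorphism $G'_{r^+}\isom G_{r^+}$ of the respective supports. Passing to the limit over $\T_m$ and applying \rco{explicit} to both groups, I get $\pi_!(E'_r)=E_r$, where $E'_r$ (resp.\ $E_r$) is viewed as a distribution supported on $G'_{r^+}$ (resp.\ $G_{r^+}$), extended by zero; equivalently, $\pi$ identifies $E_r|_{G_{r^+}}$ with $E'_r|_{G'_{r^+}}$ via the homeomorphism of supports.

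Next I would verify that this homeomorphism respects the ingredients of stability. The conjugation actions of $\G$ on $G_{r^+}$ and of $\G'$ on $G'_{r^+}$ both factor through the common adjoint group and are intertwined by $\pi$, so $\pi$ is $\G^{\ad}(F)$-equivariant on supports. Since $\Ker\pi$ is finite of order prime to $p$ it contains no nontrivial unipotent, so an element of $G'_{r^+}$ is semisimple iff its image is; and since $\pi$ induces a bijection on root data, $\pi$ restricts to a bijection between the strongly regular semisimple loci $(G')^{\sr}\cap G'_{r^+}$ and $G^{\sr}\cap G_{r^+}$, with $\Stab_{\G}(\pi x')=\pi(\Stab_{\G'}(x'))$ for every such $x'$. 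For $\TT':=\Stab_{\G'}(x')$ and $\TT:=\pi(\TT')$, the inclusion $\Ker\pi\subseteq\TT'$ (centrality of $\Ker\pi$) gives a canonical identification $\G'/\TT'=\G/\TT$ intertwining the orbit maps $a_{x'}$ and $a_{\pi x'}$, and since $\pi^*\om_\G$ and $\pi^*\om_\TT$ are invariant top forms on $\G'$ and $\TT'$, the invariant measures $|\om_\G/\om_\TT|$ and $|\om_{\G'}/\om_{\TT'}|$ on this common space agree up to a positive constant. Hence $\pi$ matches stable orbits with stable orbits and stable orbital integrals up to positive scalars, so it induces a bijection between the unstable functions in $C_c^\infty(G'_{r^+})$ and in $C_c^\infty(G_{r^+})$.

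To finish, recall that $G_{r^+}\subseteq G$ and $G'_{r^+}\subseteq G'$ are open, closed and stable (\rl{clos}), so for any unstable $f\in C_c^\infty(G)$ the truncation $f\cdot 1_{G_{r^+}}$ is again unstable and $E_r(f)=E_r(f\cdot 1_{G_{r^+}})$; transporting along $\pi$ using $E_r|_{G_{r^+}}=\pi_!(E'_r|_{G'_{r^+}})$ gives $E_r(f\cdot 1_{G_{r^+}})=E'_r\bigl((f\cdot 1_{G_{r^+}})\circ\pi\bigr)$ with $(f\cdot 1_{G_{r^+}})\circ\pi\in C_c^\infty(G'_{r^+})$ unstable, so this vanishes when $E_r$ on $G'$ is stable; the reverse implication is symmetric, using $\pi^{-1}$ and $E'_r=(\pi^{-1})_!(E_r)$ on supports. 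The step I expect to be the main obstacle is the middle one: carefully checking that $\pi$ matches the strongly regular semisimple loci and the stable orbits on $G_{r^+}$ and $G'_{r^+}$ — identifying the relevant maximal tori, orbit maps and invariant measures, and using that although $\pi$ need not be surjective on $F$-points globally it is a homeomorphism on these loci by \rl{isog} — whereas the outer two steps are then formal.
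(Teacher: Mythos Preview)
Your proposal is correct and follows essentially the same route as the paper: reduce to functions supported on $G_{r^+}$ using \rco{explicit}(b) and \rl{clos}, then transport $E_r$ along the homeomorphism $G'_{r^+}\isom G_{r^+}$ of \rl{isog} using the explicit limit formula \rco{explicit}(a). The paper's own proof is only two sentences and leaves the matching of unstable functions under $\pi$ entirely implicit; you have correctly identified and supplied that step (identification of strongly regular loci, of the quotients $\G'/\TT'\cong\G/\TT$, and of the invariant measures), which is indeed the only nonformal point.
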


\begin{proof}
Since $E_r$ is supported on $G_{r^+}$ (by \rco{explicit}(b)), to show that it is stable, we have to check that
$E_r(f)=0$ for every unstable $f$ supported on $G_{r^+}$, and similarly for $\G'$.
Thus, the assertion follows from \rl{isog} and \rco{explicit}(a).
\end{proof}

\begin{Emp} \label{E:pfstable}
\begin{proof}[Proof of \rt{stable}]
Consider the natural isogeny $\pi:\G^{\sc}\times \Z(\G)^0\to \G$. Since the degree of $\pi$ divides $|\Z(\G^{\sc})|$, and $p$ is very good,
the degree of $\pi$ is prime to $p$. Hence, by \rco{isog}, to show the stability of $E_r$ on $G$, it is enough to show the stability of $E_r$ on $G^{\sc}$. Since $\G^{\sc}$ admits an $r$-logarithm by \rco{rlog}, the assertion follows from \rco{wald}.
\end{proof}
\end{Emp}

\begin{Emp} \label{E:good}
{\bf Remark.} If $F$ is of characteristic zero and $p$ is good, then $E_r$ is stable. Indeed, arguing similarly to
\re{pfstable}, we reduce to the assertion that $E_r$ is stable, if each $\H_i$ if of type $A$ and $p>2$. Then, using classification and the assumption that the characteristic of $F$ is zero,
we reduce to the case when $\G$ is either $\GL_n$ of $\GU{n}$.
In both cases, $\G$ admits an $r$-logarithm, so the assertion follows from \rco{wald}.
\end{Emp}

\appendix

\section{Properties of Moy--Prasad filtrations} \label{S:mp}

\noindent In this section we provide proofs of some of the results, formulated in Sections \ref{S:bt} and \ref{S:mpf}. We are going to follow a standard strategy, first to pass to an unramified extension, thus reducing to a quasi-split case, then to pass to a Levi subgroup, thus reducing to a rank one case, and
to finish by direct calculations. Though most of the results in this sections are well-known to specialists (see, for example, \cite[Section 1]{Vi}), we include details for completeness.

\begin{Emp} \label{E:setup}
{\bf Set-up.} Let $\S\subseteq \G$ be a maximal split torus, $\M:=\Z_\G(\S)$ the corresponding minimal Levi subgroup of $\G$, set
$\C{A}:=\C{A}_{\S}$, and let $\Phi(\C{A})_{nd}\subseteq \Phi(\C{A})$ be the set of non-divisible roots, that is, those
$\al\in \Phi(\CA)$ such that $a/2\notin\Phi(\C{A})$.
\end{Emp}

\begin{Lem} \label{L:f'}
There exists a finite unramified extension $F'/F$ such that $\G':=\G_{F'}$ is quasi-split. Moreover, for every such extension, there exists a subtorus $\S'\supseteq\S$ of $\G$ defined over $F$ such that $\S'_{F'}\subseteq \G'$ is a maximal split torus.
\end{Lem}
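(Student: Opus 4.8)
The plan is to prove the two assertions separately; the second is the substantive one.

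\emph{Existence of $F'$.} First I would note that $\G_{F^{\nr}}$ is quasi-split: this is Steinberg's theorem, applicable because $F^{\nr}$ --- a complete discretely valued field with algebraically closed residue field --- has cohomological dimension $\leq 1$ (it is a $C_1$-field by Lang). Thus $\G_{F^{\nr}}$ admits a Borel subgroup defined over $F^{\nr}$; since the scheme of Borel subgroups of $\G$ is of finite type over $F$ and $F^{\nr}=\bigcup_{F'}F'$ over the finite unramified subextensions $F'/F$, this Borel is already defined over some such $F'$, and then $\G_{F'}$ is quasi-split.

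\emph{Reduction of the ``moreover''.} Fix any finite unramified $F'/F$ with $\G':=\G_{F'}$ quasi-split. I claim it suffices to produce a maximal $F$-torus $\TT$ of $\G$ with $\S\subseteq\TT$ whose maximal $F'$-split subtorus $\S'\subseteq\TT_{F'}$ is a maximal split torus of $\G'$. Indeed, for such a $\TT$ one has $\S\subseteq\S'$ (because $\S_{F'}\subseteq\TT_{F'}$ is already $F'$-split), and $\S'$ is defined over $F$ because $X_*(\S')=X_*(\TT)^{\Gal(\ov{F}/F')}$ is stable under the full $\Gal(\ov{F}/F)$-action --- this is where one uses that $F'/F$ is unramified, hence Galois, so that $\Gal(\ov{F}/F')$ is normal in $\Gal(\ov{F}/F)$. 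So $\S'\supseteq\S$ does the job.

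\emph{Reduction to an anisotropic group.} Next I would bring in $\M:=\Z_\G(\S)$, a minimal Levi subgroup, so that $\M^{\ad}$ is $F$-anisotropic and $\S\subseteq\Z(\M)^{\circ}$ (see \re{notmp}); consequently a maximal $F$-torus of $\M$ is exactly a maximal $F$-torus of $\G$ containing $\S$. Since moreover $\S_{F'}$ is central in $\M_{F'}=\Z_{\G'}(\S_{F'})$, any maximal $F'$-split torus of $\G'$ containing $\S_{F'}$ lies in $\M_{F'}$ (and such tori exist), whence $\rk_{F'}\M_{F'}=\rk_{F'}\G'$ and a maximal split torus of $\M_{F'}$ is already a maximal split torus of $\G'$. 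Hence it is enough to find a maximal $F$-torus $\TT$ of $\M$ whose base change $\TT_{F'}$ is a maximally split maximal torus of $\M_{F'}$ (i.e. contains a maximal $F'$-split torus of $\M_{F'}$).

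\emph{The crux, and the expected obstacle.} This last point --- the existence of a maximal $F$-torus of the $F$-anisotropic-modulo-centre group $\M$ that becomes maximally split over the prescribed \emph{unramified} extension $F'$ --- is the heart of the matter and the place I expect to be the main obstacle: over a ramified extension the naive attempt to descend a maximal $F'$-split torus of $\M_{F'}$ meets a non-abelian Galois-cohomology obstruction that need not vanish, and the statement itself fails. I would either quote it as the standard input for unramified descent in Bruhat--Tits theory (e.g.\ \cite[1.10]{Ti}, \cite{BT2}, or \cite[Section 1]{Vi}) or prove it by reduction to the simply connected case: passing through $\M^{\der}$ and $\M^{\sc}$ (the closed immersion $\M^{\der}\hra\M$ and the central isogeny $\M^{\sc}\to\M^{\der}$ induce bijections of maximal $F$-tori preserving $F'$-split ranks) and decomposing into $F$-simple factors $\R_{E_i/F}\H_i$, one reduces to $\M^{\sc}=\R_{E/F}\SL{1}(D)$ with $D$ a central division algebra of degree $n$ over a finite extension $E/F$; quasi-splitness over $F'$ then forces $D$ to split over each (automatically unramified) factor of $E\otimes_F F'$, i.e.\ $n$ divides the relevant residue degree, so that the norm-one torus $\TT_E=\ker(\mathrm{Nrd}\colon\R_{L/E}\B{G}_m\to\B{G}_m)$ attached to the unramified degree-$n$ subfield $L\subseteq D$ is a maximal $E$-torus of $\SL{1}(D)$ which over each such factor becomes the diagonal maximal split torus of $\SL{n}$; transporting $\prod_i\R_{E_i/F}\TT_{E_i}$ back along $\M^{\sc}\times\Z(\M)^{\circ}\to\M$ produces $\TT$. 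Everything else above is formal bookkeeping with cocharacter lattices and relative ranks.
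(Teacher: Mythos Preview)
Your proof is correct and follows essentially the same route as the paper: reduce to the minimal Levi $\M$, pass to $\M^{\sc}$, decompose into simple factors, invoke the classification of anisotropic simply connected simple groups over local fields to land in $\SL{1}(D)$, and then use the unramified maximal subfield of $D$ to exhibit the desired torus. The only cosmetic differences are that the paper handles existence of $F'$ via the same reduction to division algebras rather than quoting Steinberg's theorem, and phrases the final step through $\GL_1(D)$ rather than directly through $\SL{1}(D)$; your explicit cocharacter-lattice justification for why $\S'$ descends to $F$ is a detail the paper leaves implicit.
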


\begin{proof}
Assume first that  $\G=\GL_{1}(D)$ for some finite-dimensional central division algebra $D$ over $F$. In this case, both assertions are easy.
Indeed, let $\dim_F D=d^2$, and let $F'/F$ be an unramified extension. Then $\G_{F'}$ is quasi-split if and only if $F'$ splits $D$. Moreover,  this happens if and only if $F'\supseteq F^{(d)}$, where $F^{(d)}/F$ is an unramified extension of degree $d$.
Furthermore, there exists an embedding $F^{(d)}\hra D$ of $F$-algebras, whose image corresponds to a torus $\S'$ we are looking for.

Assume next that $\G=\GL_{1}(D)$ for some (not necessary central) finite-dimensional division algebra $D$ over $F$. This case reduces to the first one,
and is left to the reader.

Finally, the general case follows from the previous one. Indeed, $\G_{F'}$ is quasi-split if and only if $\M_{F'}$ is quasi-split, and if and only if the simply connected covering $\M_{F'}^{\sc}$ of $\M_{F'}$ is quasi-split. Thus we may replace $\G$ by $\M^{\sc}$, thus assuming that $\G$ is semisimple, simply-connected, and anisotropic. Next, decomposing $\G$ into simple factors, we may further assume that
$\G$ is simple. Then $\G=\SL{1}(D)$ for some finite-dimensional division algebra over $F$, and $\SL{1}$ denotes the kernel of the reduced norm (see \cite[Thm 6.5. p. 285]{PlR}). Since the assertion for $\SL{1}(D)$ follows from the assertion for $\GL_{1}(D)$, the proof is now complete.
\end{proof}

\begin{Emp} \label{E:order}
{\bf Affine roots subgroups.}  (a) Choose a set of positive roots $\Phi(\C{A})^+_{nd}\subseteq \Phi(\C{A})_{nd}$, and a total order on $\Phi(\C{A})_{nd}\cup\{0\}$ such that $\al>0$ if and only if $\al\in\Phi(\C{A})^+_{nd}$.
Set $\U_0:=\M$. Then the product map $\prod_{\al\in \Phi(\C{A})_{nd}\cup\{0\}}\U_{\al}\to \G$ is an open embedding.

(b) For every $\al\in\Phi(\CA)$, $x\in\CA$ and $r\in\B{R}_{\geq 0}$, we denote by $\psi_{\al,x,r}$ the smallest affine root $\psi\in\Psi(\C{A})$ such that $\al_{\psi}=\al$ and $\psi(x)\geq r$. Set $U_{\al,x,r}:=U_{\psi_{\al,x,r}}\subseteq U_{\al}$ and
$\u_{\al,x,r}:=\u_{\psi_{\al,x,r}}\subseteq \u_{\al}$.

(c) We also set $U_{(\al),x,r}:=U_{\al,x,r}\cdot U_{2\al,x,r}\subseteq U_{\al}$, if  $2\al\in \Phi(\CA)$; $U_{(\al),x,r}:=U_{\al,x,r}$, if  $2\al\notin \Phi(\CA)$; and $U_{0,x,r}:=M_r$.
\end{Emp}

\begin{Emp} \label{E:sl2}
{\bf The $\SL{2}$-case.} Let $\G=\SL{2}$, and let $\S\subseteq\G$ be the group of diagonal matrices. In this case, $\G$ and $\S$ have natural $\C{O}$-structures, hence
 we have a natural identification $\C{A}\isom V_{\G,\S}$ (see \re{apart}(a)), which identifies  $\Phi(\C{A})$ with $\pm \al$ and $\Psi(\C{A})$ with $\pm\al+\B{Z}$. Moreover, if the root subgroup $U_{\al}$ consists of matrices  $g_a=\left(\begin{matrix}
1 & a \\
0& 1\end{matrix}\right)$
with $a\in F$, then the affine root subgroup $U_{\al+n}\subseteq U_{\al}$ consists of $g_a\in U_{\al}$ with $\val_F(a)\geq n$.
\end{Emp}

\begin{Emp} \label{E:su3}
{\bf The $\SU{3}$-case} (compare \cite[Ex. 1.15]{Ti}). (a) Let $K/F$ be a separable totally ramified quadratic extension, and let $\tau\in\Gal(K/F)$ be a non-trivial element.
Let $\G=\SU{3}$ be the special unitary group over $F$ split over $K$, corresponding to the quadratic form
$(\ov{x},\ov{y})\mapsto\sum_i x_i y^{\tau}_{3-i}$. Let $\S\subseteq\G$ the maximal torus, corresponding to diagonal matrices, and let
$\al\in\Phi(\G,\S)$ be the non-divisible root such that $U_{\al}$ consists of upper triangular matrices.
Then $U_{\al}$ consists of all elements of the form
$g_{a,b}=
\left(\begin{matrix}
1 & -a & -b\\
0& 1 & a^{\tau}\\
0& 0& 1
\end{matrix}\right), a,b\in K \text{ such that }a a^{\tau}+b+b^{\tau}=0,
$
while $U_{2\al}$ consists of all $g_{0,b}\in U_{\al}$.

(b) Set $\dt:=\max\{\val_K(b)|b+b^{\tau}+1=0\}$. Then $\dt\leq 0$, and $\dt=0$ if and only if $p\neq 2$.
For every $g_{a,b}\in U_{\al}$, we have $\val_K(b)\leq 2\val_K(a)+\dt$, and for every $a\in K\m$ there exists
$g_{a,b}\in U_{\al}$ with $\val_K(b)= 2\val_K(a)+\dt$. On the other hand, as it was explained in \cite[Ex. 1.15]{Ti},
for every $g_{0,b}\in U_{2\al}$, we have $\val_K(b)\in 2\B{Z}+\dt+1$.

(c) Using the identification $\C{A}\isom V_{\G,\S}$ corresponding to the standard $\C{O}$-structure of $\G$ and $\S$ (see \re{apart}(a)), we identify the set of affine roots $\Psi(\C{A})$ with the set  \[(\pm\al+\frac{1}{4}(2\B{Z}+\dt))\cup(\pm 2\al+\frac{1}{2}(2\B{Z}+\dt+1)),\] where we divide by an extra  $2$, because our normalization uses valuation $\val_F=\frac{1}{2}\val_K$.

(d) In the notation of (c), for $\psi:=\al+\frac{1}{4}(2n+\dt)$, the subgroup
$U_{\psi}$ consists of all $g_{a,b}\in U_{\al}$ such that $\val_K(b)\geq 2n+\dt$, while for $\psi:=2\al+\frac{1}{2}(2n+\dt+1)$ the subgroup $U_{\psi}$ consists of $g_{0,b}\in U_{\al}$ such that $\val_K(b)\geq 2n+\dt+1$.

(e) Using (d), for every $x\in\C{A}$ and $r\in\B{R}_{\geq 0}$, the subgroup $U_{\al,x,r}$ consists  of $g_{a,b}\in U_{\al}$ such that
$\val_K(b)\geq 4r-4\al(x)$, while the subgroup $U_{2\al,x,r}$ consists of $g_{0,b}\in U_{\al}$ such that $\val_K(b)\geq 2r-4\al(x)$.
In particular, we have $U_{\al,x,r}\cap U_{2\al,x,r}=U_{2\al,x,2r}$.

(f) We claim that an element $g_{a,b}\in U_{\al}$ belongs to $U_{(\al),x,r}$ if and only if we have inequalities $\val_K(a)\geq 2r-2\al(x)-\frac{1}{2}\dt$
and  $\val_K(b)\geq 2r-4\al(x)$.

By definition, $U_{(\al),x,r}$ consists of elements of the form $g_{a,b'+b''}=g_{a,b'}\cdot g_{0,b''}$ such that
$g_{a,b'}\in U_{\al,x,r}$ and $g_{0,b''}\in U_{2\al,x,r}$. In particular, we have
$\val_K(b'')\geq 2r-4\al(x)$, and $\val_K(b')\geq 4r-4\al(x)$ (by (d)), hence $2\val_K(a)\geq \val_{K}(b')-\dt\geq 4r-4\al(x)-\dt$ (by (b)) and
$\val_K(b'+b'')\geq \min\{\val_K(b'),\val_K(b'')\}\geq 2r-4\al(x)$.

Conversely, assume that an element $g_{a,b}\in U_{\al}$ satisfies $\val_K(a)\geq 2r-2\al(x)-\frac{1}{2}\dt$
and  $\val_K(b)\geq 2r-4\al(x)$. Choose $g_{a,b'}\in U_{\al}$ with $\val_K(b')= 2\val_K(a)+\dt$,
and set $b'':=b-b'$. Then $\val_K(b')\geq 4r-4\al(x)$ and $\val_K(b'')\geq \min\{\val_K(b),\val_K(b')\}\geq 2r-4\al(x)$.
Thus $g_{a,b'}\in U_{\al,x,r}$ and $g_{0,b''}\in U_{2\al,x,r}$, hence $g_{a,b'+b''}\in U_{(\al),x,r}$.
\end{Emp}

\begin{Emp} \label{E:levi}
{\bf Levi subgroups.} Let $\L\supseteq\S$ be a Levi subgroup of $\G$, and set $\C{A}_{\L}:=\C{A}_{\L,\S}$.

(a) We have a natural projection $\pr_\L:\C{A}\to \C{A}_{\L}$ of affine spaces, compatible with the projection
$V_{\G,\S}\to V_{\L,\S}$ of vector spaces (see \cite[1.10 and 1.11]{La}).

(b) We have an inclusion $\Phi(\C{A}_{\L})\subseteq\Phi(\C{A})$, and every affine root $\psi\in \Psi(\C{A})$ such that
$\al_{\psi}\in \Phi(\C{A}_{\L})$ induces an affine function $\psi_{\L}$ on $\C{A}_{\L}$, which belongs to $\Psi(\C{A}_{\L})$.
Moreover, the correspondence $\psi\mapsto\psi_{\L}$ induces a bijection between the set of affine roots $\psi\in \Psi(\C{A})$ such that $\al_{\psi}\in\Phi(\C{A}_\L)$ and the set $\Psi(\C{A}_{\L})$.

(c) By definition, for every  $\psi\in \Psi(\C{A})$ such that $\al_{\psi}\in \Phi(\C{A}_{\L})\subseteq\Phi(\CA)$, the affine root subgroup
$U_{\psi}\subseteq U_{\al_{\psi}}$ equals $U_{\psi_{\L}}$.

(d) By (b) and (c), for every $\al\in\Phi(\C{A}_{\L})\subseteq \Phi(\C{A})$, $x\in\C{A}$ and $r\in \B{R}_{\geq 0}$, the affine root subgroup
$U_{\al, x,r}\subseteq U_{\al}\subseteq G$ equals $U_{\al, \pr_{\L}(x),r}\subseteq U_{\al}\subseteq L$.

(e) For every $\al\in\Phi(\C{A})\subseteq X^*(\S)$, let $\S_{\al}$ be the connected component $(\Ker \al)^0$ and set
$\L_{\al}:=\Z_\G(\S_{\al})$. Then $\L_{\al}$ is a Levi subgroup of $\G$ semisimple rank one, thus
$\L^{\sc}_{\al}$ is isomorphic either to $\R_{F'/F}\SL{2}$ or to $\R_{F'/F}\SU{3}$ for some finite separable extension $F'/F$.
\end{Emp}

\begin{Emp} \label{E:weil}
{\bf Weil restriction of scalars.} Let $F'/F$ be a finite separable extension of ramification degree $e$, and set $\G':=\R_{F'/F}\G$. Then we have natural identifications $\G'(F)\cong \G(F')$ and
$\C{X}(\G')\cong\C{X}(\G)$. Moreover, since $\val_{F'}=e\val_{F}$, for every $x\in\C{X}(\G')\cong\C{X}(\G)$ and $r\in\B{R}_{\geq 0}$ the isomorphism $\G'(F)\cong \G(F')$ induces an isomorphism $G'_{x,r}\cong G_{x,er}$.
\end{Emp}

\begin{Emp} \label{E:unr}
{\bf The unramified descent.}
(a) Let $F'/F$, $\G':=\G_{F'}$ and $\S'\subseteq \S$ be as in \rl{f'}. Let $\C{A}'\subseteq\CX(\G')$ be the apartment corresponding to $\S'_{F'}\subseteq\G'$, and set $\Gm':=\Gal(F'/F)$. Then $\C{A}'$ is equipped with an action of $\Gm'$,
and we have a natural identification $\C{A}\isom \C{A}'^{\Gm'}$.

(b) Note that for $\al\in\Phi(\CA)$, the root group  $\U'_{\al}:=(\U_{\al})_{F'}$ equals the product $\prod_{\al'}\U'_{\al'}$, where $\al'$ runs over the union of all $\al'\in \Phi(\CA')$ such that $\al'|_{\C{A}}=\al$ and all $\al'\in \Phi(\CA')_{nd}$ such that $\al'|_{\C{A}}=2\al$ (compare \cite[21.9]{Bo2}).

(c) Moreover, for every $x\in\CA$ and $r\in\B{R}_{\geq 0}$,
the affine root subgroup $U_{\al,x,r}\subseteq U_{\al}$ equals
$U_{\al,x,r}=(U'_{\al,x,r})^{\Gm'}$, where $U'_{\al,x,r}\subseteq U'_{\al}$ is the product
\[
\left(\prod_{\al'\in \Phi(\CA'),\al'|_{\C{A}}=\al} U'_{\al',x,r}\right)\times\left(\prod_{\al'\in \Phi(\CA')_{nd},\al'|_{\C{A}}=2\al} U'_{\al',x,2r}\right),
\]
taken in every order (use, for example, \cite[10.19 and 11.5]{La}).

(d) For every triple $(\al,x,r)$ as in (b),(c) such that $2\al\in \Phi(\CA)$, we have the equality $U_{\al,x,r}\cap U_{2\al,x,r}=U_{2\al,x,2r}$.
Indeed, by (c), it suffices to show that $U'_{\al,x,r}\cap U'_{2\al,x,r}=U'_{2\al,x,2r}$, which reduces to the equality
$U'_{\al',x,r}\cap U'_{2\al',x,r}=U'_{2\al',x,2r}$ for every $\al'\in\Phi(\CA')$ such that  $2\al'\in\Phi(\CA')$. Enlarging $F'$, if necessary, we may assume that $\G'$ splits over a totally ramified extension.  Using \re{levi}(d),(e) and \re{weil}, we reduce to the case $\G=\SU{3}$, in which case the assertion was shown in \re{su3}(e).

(e)  We set $U'_{(\al),x,r}:=U'_{\al,x,r}\cdot U'_{2\al,x,r}\subseteq U'_{\al}$, if  $2\al\in \Phi(\CA)$, and  $U'_{(\al),x,r}:=U'_{\al,x,r}$,  otherwise. We claim that $U_{(\al),x,r}=(U'_{(\al),x,r})^{\Gm'}$. If $2\al\notin \Phi(\CA)$, this follows from (c).
If $2\al\in \Phi(\CA)$, we have to show that $(U'_{\al,x,r}\cdot U'_{2\al,x,r})^{\Gm'}=(U'_{\al,x,r})^{\Gm'}\cdot (U'_{2\al,x,r})^{\Gm'}$. Since $U'_{\al,x,r}\cap U'_{2\al,x,r}=U'_{2\al,x,2r}$ by (d), it suffices to show that $H^1(\Gm',U'_{2\al,x,2r})=0$. Using Shapiro's lemma, the assertion reduces to the vanishing of $H^1(\Gm',\C{O}_{F'})$, which follows from the additive Hilbert 90 theorem.

(f) For every two triples $(\al,x,r)$ and $(\al,y,s)$ as in (b),(c) such that $2\al\in \Phi(\CA)$ we have
$U_{(\al),x,r}\cap U_{(\al),y,s}=(U_{\al,x,r}\cap U_{\al,y,s})\cdot(U_{2\al,x,r}\cap U_{2\al,y,s})$.

Indeed, using (c)-(e) and arguing as in (e), we reduce the assertion to the corresponding equality of the $U'$'s. Then using \re{levi}(d),(e) and \re{weil} we reduce to the case $\G=\SU{3}$, in which case we finish by precisely the same arguments as \re{su3}(f).
\end{Emp}

\begin{Emp} \label{E:lattice}
{\bf Applications}.
(a) Each $\u_{\psi}\subseteq \u_{\al}$ is an $\C{O}$-lattice (see \re{root}(c)). Indeed,
by \re{unr}(c), we reduce to the case when $\G$ is quasi-split and split over a totally ramified extension. Then using \re{levi}(d),(e) and \re{weil}, we reduce to the absolute rank one case, in which case, the assertion follows from formulas  of \re{sl2} and \re{su3}.

(b) For every $x\in \C{A}$, $r\in \B{R}_{\geq 0}$ and $n\in\B{N}$, we have the equality $\varpi^n\g_{x,r}=\g_{x,r+n}$. Again, this can be shown by the same strategy as in (a).

(c) For every $\psi\in\Psi(\C{A})$  there exists a positive integer $n_{\psi}$ such that
the set of $\psi'\in \Psi(\C{A})$ with $\al_{\psi'}=\al_{\psi}$ equals $\psi+\frac{1}{n_{\psi}}\B{Z}$ (see \re{ref}(a)).
Again, we reduce to the absolute rank one case as in (a) and use the explicit formulas from \re{sl2} and \re{su3}.
\end{Emp}

\begin{Lem} \label{L:iwahori}
(a) In the situation of \rp{mp}, the subalgebra $\g_{x,r}$ decomposes as a direct sum
$\g_{x,r}=\mm_r\oplus\prod_{\al\in\Phi(\CA)} \u_{\al,x,r}$.

(b) Assume in addition that either $r>0$ or $x$ lies in a chamber of $[\CX]$.
For every order of $\Phi(\C{A})_{nd}\cup\{0\}$ as in \re{order}(a), the product map
$\prod_{\al\in\Phi(\CA)_{nd}\cup\{0\}} U_{(\al),x,r}\to G_{x,r}$ is bijective.
\end{Lem}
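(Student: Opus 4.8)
The plan is to prove (a) by a direct comparison with the weight space decomposition of $\g$, and (b) along the classical Bruhat--Tits route: first reduce by unramified descent to the quasi-split case and by passage to Levi subgroups to semisimple rank at most two, then invoke the explicit rank-one computations of \re{sl2} and \re{su3}. For part (a): by \rp{mp}(b), $\g_{x,r}$ is the $\C{O}$-span of $\mm_r$ together with the lattices $\u_\psi$ over all $\psi\in\Psi(\CA)$ with $\psi(x)\geq r$. Fix $\al\in\Phi(\CA)$. By \re{ref}(a) and \re{lattice}(c) the affine roots with vector part $\al$ form a coset of $\frac{1}{n}\B{Z}$ for a positive integer $n$, unbounded in both directions, so among those with $\psi(x)\geq r$ there is a smallest one, namely the $\psi_{\al,x,r}$ of \re{order}(b); and by the nesting of the lattices $\u_\psi$ with a given vector part (see \re{rig}(d) in the split case and \re{lattice}(a) in general) each such $\u_\psi$ lies in $\u_{\al,x,r}=\u_{\psi_{\al,x,r}}$. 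Hence the span of all $\u_\psi$ with $\al_\psi=\al$ and $\psi(x)\geq r$ equals $\u_{\al,x,r}$, whence $\g_{x,r}=\mm_r+\sum_{\al\in\Phi(\CA)}\u_{\al,x,r}$. Since $\mm_r\subseteq\mm$, $\u_{\al,x,r}\subseteq\u_\al$, and $\g=\mm\oplus\bigoplus_{\al\in\Phi(\CA)}\u_\al$ is the decomposition of $\g$ into weight spaces for the adjoint action of $\S$ (see \re{rig}(b), \re{root}(a)), this sum is direct, proving (a).

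For part (b), injectivity is immediate from \re{order}(a): the open immersion $\prod_{\al\in\Phi(\CA)_{nd}\cup\{0\}}\U_\al\to\G$ (with $\U_0:=\M$) restricts, on $F$-points, to an injection on the product of the subgroups $U_{(\al),x,r}$ (and $M_r$ for $\al=0$). For surjectivity, note first that the image $\Omega$ of the product map lies in $G_{x,r}$ — the factor with $\al=0$ is $M_r$, and for $\al\neq0$ the factor $U_{(\al),x,r}$ is generated by $U_{\al,x,r}=U_{\psi_{\al,x,r}}$ and $U_{2\al,x,r}$, whose defining affine roots take values $\geq r$ at $x$ — while by \rp{mp}(a) the subgroups $U_{(\al),x,r}$ generate $G_{x,r}$. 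So it suffices to prove that $\Omega$ is a subgroup: then $\Omega\supseteq\langle\,U_{(\al),x,r}\,\rangle=G_{x,r}$, and with $\Omega\subseteq G_{x,r}$ this gives equality. Fixing a convenient order on $\Phi(\CA)_{nd}\cup\{0\}$ (the negative non-divisible roots, then $0$, then the positive ones in non-decreasing height), this follows by induction on root heights from the commutator relations $[U_{(\al),x,r},U_{(\beta),x,r}]\subseteq\prod_{\gamma=i\al+j\beta\in\Phi(\CA),\,i,j\geq1}U_{(\gamma),x,(i+j)r}$ (using $U_{(\gamma),x,(i+j)r}\subseteq U_{(\gamma),x,r}$ since $(i+j)r\geq r$) together with, for the interaction of $\al$ with $-\al$, the rank-one identity $U_{(-\al),x,r}\cdot U_{(\al),x,r}\subseteq U_{(\al),x,r}\cdot M_r\cdot U_{(-\al),x,r}$; once $\Omega=G_{x,r}$ is known to be a group, the same commutator relations show that the ordered product is the same for every order on $\Phi(\CA)_{nd}\cup\{0\}$.

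Both the commutator relations and the rank-one identity used above are verified by the reduction scheme of this appendix: \re{levi}(d),(e) replaces $\G$ by the Levi subgroup $\L_{\al,\beta}$ (resp. $\L_\al$) generated by the roots involved, of semisimple rank at most two (resp. one); \re{unr} passes to a finite unramified extension making this Levi quasi-split and split over a totally ramified extension, the relevant subgroups being recovered as Galois invariants with the help of the $H^1$-vanishing of \re{unr}(e),(f); and then the relations drop out of the explicit matrix descriptions in the $\SL{2}$-case \re{sl2} and the $\SU_3$-case \re{su3}. The hypothesis that $r>0$ or $x$ lies in a chamber of $[\CX]$ is exactly the condition under which the rank-one identity holds: when $r=0$ and $x$ lies on a wall the analogous statement fails, just as the equality $\SL{2}(\C{O})=U^-(\C{O})\,T(\C{O})\,U^+(\C{O})$ fails.

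The main obstacle is the surjectivity in (b), i.e. proving that the ordered product $\Omega$ is closed under multiplication. This rests on the precise commutator relations among the affine root subgroups and on the rank-one ``opposite-roots'' identity, and the real work lies in pushing these through the successive reductions (to a Levi subgroup, then to an unramified extension, then to the $\SL{2}$ and $\SU_3$ computations) while keeping accurate track of the filtration indices and of the hypothesis on the pair $(x,r)$. Part (a), by contrast, is essentially immediate from \rp{mp}(b) once the nesting of the lattices $\u_\psi$ is in hand.
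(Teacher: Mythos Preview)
Your argument for (a) is fine and is essentially the ``similar but much easier'' case the paper alludes to.

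For (b), the paper takes a different route in the quasi-split case. Rather than trying to show the ordered product $\Omega$ is a subgroup via commutator relations, the paper argues by successive approximation: the image $Y$ of the product map is closed (it is a continuous image of a compact set), and it suffices to show $G_{x,r}\subseteq Y\cdot G_{x,s}$ for every $s\geq r$. Since $G_{x,s}$ is generated by the $U_{(\al),x,s}$, one is reduced to showing $Y\cdot U_{(\al),x,s}\subseteq Y\cdot G_{x,s^+}$, which for $s>0$ follows from the single commutator inclusion $(G_{x,r},G_{x,s})\subseteq G_{x,r+s}$ of \cite{PrR}; the case $s=r=0$ uses only that $M_0$ normalises each $U_{(\al),x,0}$ and that, since $x$ lies in a chamber, every $U_{(\al),x,0}$ already equals some $U_{(\al),x,s}$ with $s>0$. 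The non-quasi-split case is then handled by unramified descent exactly as you suggest, via $G_{x,r}=(G'_{x,r})^{\Gm'}$ and $U_{(\al),x,r}=(U'_{(\al),x,r})^{\Gm'}$.

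Your direct approach has a genuine gap at the step ``by induction on root heights''. The rank-one identity $U_{(-\al),x,r}\cdot U_{(\al),x,r}\subseteq U_{(\al),x,r}\cdot M_r\cdot U_{(-\al),x,r}$ outputs factors at the \emph{same} filtration level $r$ as its inputs; and when two roots of opposite sign (not a $\pm\al$-pair) are commuted, the resulting roots $i\al+j\beta$ need not have larger height than the inputs. So there is no quantity that strictly decreases under the rewriting rules you invoke, and hence no terminating induction. Concretely, in trying to prove $N^+_r\cdot N^-_r\subseteq N^-_r\cdot M_r\cdot N^+_r$ one generates new factors that must again be commuted past other root subgroups, and without a filtration-deepening mechanism this cascade does not close up by algebra alone. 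The paper's approximation argument is precisely the device that replaces this missing induction variable: working modulo $G_{x,s}$ and increasing $s$ pushes all error terms into strictly smaller subgroups, and closedness of $Y$ then handles the limit.
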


\begin{Emp}
{\bf Remark.} Actually, the map in (b) is bijective for every order of $\Phi(\C{A})_{nd}\cup\{0\}$.
\end{Emp}

\begin{proof}
We show only (b), while the proof of (a) is similar but much easier.

Since $U_{(\al),x,r}\subseteq U_{\al}$ for all $\al$, the injectivity follows from \re{order}(a).
To show the surjectivity, assume first that $\G$ is quasi-split.
In this case, the argument is standard (compare \cite[2.9]{PrR}), and can be carried out as follows.

Let $Y\subseteq G_{x,r}$ be the image of the product map. Since $Y$ is closed, and $\{G_{x,s}\}_{s\geq 0}$
form a basis of open neighbourhoods, it remains to show that $G_{x,r}\subseteq Y\cdot G_{x,s}$ for every $s\geq r$. For this it suffices to show that
$Y\cdot G_{x,s}\subseteq Y\cdot G_{x,s^+}$ for every $s\geq r$. Since $G_{x,s}$ is generated by subgroups $U_{(\al),x,s}$, it remains to show that $Y\cdot U_{(\al),x,s}\subseteq Y\cdot G_{x,s^+}$.

If $s>0$, this follows from the inclusion $(G_{x,r},G_{x,s})\subseteq G_{x,r+s}$ (use \cite[2.4 and 2.7]{PrR}).  If $s=r=0$, and $\al=0$,
this follows from the fact that
$M_r$ normalizes each $U_{(\al),x,r}$. If $\al\neq 0$, then $U_{(\al),x,0}=U_{(\al),x,s}$ for some $s>0$, because $x$ belongs to a chamber,
and the assertion is immediate.

For an arbitrary $\G$, let $F'/F$ and $\G'$ be as in \re{mp}(b). Note that the embedding
$\C{X}(\G)\hra\CX(\G')$ maps chambers into chambers. Set $U'_{(0),x,r}:=M'_{x_{\M},r}$. As it was already shown, the assertion holds for
$G'_{x,r}$ and $M'_{x_{\M},r}$. This implies that the product $\prod_{\al\in\Phi(\CA)_{nd}\cup\{0\}}U'_{(\al),x,r}\to G'_{x,r}$ is bijective. Now the assertion follows from equalities $G_{x,r}=(G'_{x,r})^{\Gm'}$, $M_{r}=(M'_{x_{\M},r})^{\Gm'}$, which were our definitions, and $U_{(\al),x,r}=(U'_{(\al),x,r})^{\Gm'}$ for all $\al\in\Phi(\CA)_{nd}$ (see \re{unr}(d)).
\end{proof}

\begin{Cor} \label{C:iwahori}
Let $(x,r)$ be as in \rl{iwahori}(b), $y\in\C{A}$ and $s\in \B{R}_{\geq 0}$.  Then

(a) For every order of $\Phi(\C{A})_{nd}\cup\{0\}$ as in \re{order}(a), the product map
\[
\prod_{\al\in\Phi(\CA)_{nd}\cup\{0\}} (U_{(\al),x,r}\cap U_{(\al),y,s})\to G_{x,r}\cap G_{y,s}
\]
is bijective.

(b) The subgroup $G_{x,r}\cap G_{y,s}$ is generated by $M_{\max\{r,s\}}$ and the affine root subgroups $U_{\psi}$, where $\psi$ runs over all elements of $\Psi(\C{A})$ such that  $\psi(x)\geq r$ and $\psi(y)\geq s$.
\end{Cor}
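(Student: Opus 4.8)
The plan is to prove (a) first and then to deduce (b) formally. Fix the order on $\Phi(\CA)_{nd}\cup\{0\}$ as in \re{order}(a) and set $V_\al:=U_{(\al),x,r}\cap U_{(\al),y,s}$ for $\al\in\Phi(\CA)_{nd}$, and $V_0:=M_r\cap M_s$. The injectivity in (a) goes exactly as for \rl{iwahori}(b): each $V_\al$ is a subgroup of $U_\al$ (resp.\ of $M$ when $\al=0$), so the product map factors through the open embedding $\prod_\al U_\al\to\G$ of \re{order}(a) and is therefore injective; its image lies in $G_{x,r}\cap G_{y,s}$ because each $V_\al$ does, by \rp{mp}(a). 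So the real content of (a) is surjectivity, which I would establish by imitating the proof of \rl{iwahori}(b).

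First I would reduce to the quasi-split case as in the last paragraph of the proof of \rl{iwahori}(b): choose $F'/F$, $\Gm':=\Gal(F'/F)$ and $\G':=\G_{F'}$ as in \re{unr}, so that $\G'$ is quasi-split (split over a totally ramified extension) and the embedding $\CX(\G)\hra\CX(\G')$ carries chambers into chambers, whence $(x,r)$ still satisfies the hypothesis of \rl{iwahori}(b) over $F'$. Since taking $\Gm'$-invariants commutes with finite intersections, one gets $G_{x,r}\cap G_{y,s}=(G'_{x,r}\cap G'_{y,s})^{\Gm'}$, $M_r\cap M_s=(M'_{x_{\M},r}\cap M'_{x_{\M},s})^{\Gm'}$ and, by \re{unr}(e), $V_\al=(U'_{(\al),x,r}\cap U'_{(\al),y,s})^{\Gm'}$; moreover the uniqueness in \rl{iwahori}(b) over $F'$ makes the coordinate projections of the $F'$-factorization $\Gm'$-equivariant, so that $\Gm'$-invariance of an element of $G'_{x,r}$ is equivalent to $\Gm'$-invariance of all of its coordinates. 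Granting the statement over $F'$ — with the product taken over $\al'\in\Phi(\CA')_{nd}\cup\{0\}$ and then regrouped according to whether $\al'|_{\CA}$ equals $\al$ or $2\al$, as in \re{unr}(b),(c) — the statement over $F$ follows by passing to $\Gm'$-invariants.

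So assume $\G$ quasi-split. Let $Y\subseteq G$ be the image of the product map $\prod_\al V_\al\to G$; it is compact, hence closed, and $Y\subseteq G_{x,r}\cap G_{y,s}$. Given $g\in G_{x,r}\cap G_{y,s}$, write $g=\prod_\al u_\al$ with $u_\al\in U_{(\al),x,r}$ via \rl{iwahori}(b); it then suffices to show $u_\al\in U_{(\al),y,s}$ for every $\al$, for then $g\in Y$. I would prove this by the same successive-approximation argument that yields surjectivity in \rl{iwahori}(b), now carried out with respect to both filtrations $\{G_{x,t}\}_t$ and $\{G_{y,t}\}_t$: peeling off one factor at a time and sliding it into its correct position at the cost of commutators controlled by $(G_{x,r},G_{x,t})\subseteq G_{x,r+t}$ and $(G_{y,s},G_{y,t})\subseteq G_{y,s+t}$ (see \cite{PrR}), one finds that the $U_{(\al),x,r}$-coordinate of an element which also lies in $G_{y,s}$ is forced into $U_{(\al),y,s}$; the hypothesis that $r>0$ or $x$ lies in a chamber enters exactly as in the proof of \rl{iwahori}(b). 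I expect this bookkeeping in the quasi-split case to be the main obstacle; once it is in place, the rest is formal.

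Finally I would deduce (b) from (a). By (a), $G_{x,r}\cap G_{y,s}=\prod_\al V_\al$, hence it is generated by $V_0=M_r\cap M_s=M_{\max\{r,s\}}$ together with the $V_\al$, $\al\in\Phi(\CA)_{nd}$. By \re{unr}(f), $V_\al=(U_{\al,x,r}\cap U_{\al,y,s})\cdot(U_{2\al,x,r}\cap U_{2\al,y,s})$, the second factor being trivial when $2\al\notin\Phi(\CA)$; and, since the affine roots with a fixed vector part $\beta$ form a coset of a rank-one lattice along which $\psi\mapsto U_\psi$ is decreasing, $U_{\beta,x,r}\cap U_{\beta,y,s}=U_\psi$ for the smallest $\psi\in\Psi(\CA)$ with $\al_\psi=\beta$, $\psi(x)\geq r$ and $\psi(y)\geq s$. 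Thus $G_{x,r}\cap G_{y,s}$ is generated by $M_{\max\{r,s\}}$ together with certain $U_\psi$ satisfying $\psi(x)\geq r$ and $\psi(y)\geq s$; conversely, every such $U_\psi$ lies in $G_{x,r}\cap G_{y,s}$ by \rp{mp}(a). Hence $G_{x,r}\cap G_{y,s}$ is generated by $M_{\max\{r,s\}}$ and all the $U_\psi$ with $\psi(x)\geq r$, $\psi(y)\geq s$, which is the assertion of (b).
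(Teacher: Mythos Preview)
Your setup for (a) is correct --- decompose $g\in G_{x,r}\cap G_{y,s}$ via \rl{iwahori}(b) as $g=\prod_\al u_\al$ with $u_\al\in U_{(\al),x,r}$, and then show $u_\al\in U_{(\al),y,s}$ --- but the method you propose for this last step is both harder than necessary and incomplete in a genuine way. Two points:

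First, the reduction to the quasi-split case is unnecessary: \rl{iwahori}(b) is already available in full generality, so you can use it directly. More importantly, you are missing the key observation that makes the argument short. Whenever $(y,s)$ \emph{also} satisfies the hypothesis of \rl{iwahori}(b) --- that is, $s>0$, or $y$ lies in a chamber --- you can apply \rl{iwahori}(b) a second time to write $g=\prod_\al v_\al$ with $v_\al\in U_{(\al),y,s}$. Since the product map $\prod_\al\U_\al\to\G$ of \re{order}(a) is injective, the two decompositions coincide: $u_\al=v_\al\in U_{(\al),y,s}$, and you are done. No successive approximation is needed.

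Second, the residual case is $s=0$ with $y$ not in a chamber, and here your sketched commutator argument runs into trouble: the filtration $\{G_{y,t}\}_t$ gives no contraction at level $0$, and the trick $U_{(\al),y,0}=U_{(\al),y,t}$ for small $t>0$ used in the proof of \rl{iwahori}(b) fails precisely because $y$ is not in a chamber. The paper handles this instead by a short geometric limiting argument: choose $y'\in[x,y)$ close to $y$ lying in a chamber $\si$ with $y\in\cl(\si)$; then $g\in G_x\cap G_y\subseteq G_{y'}$ by convexity (\rl{convex}), the previous paragraph applies to $(y',0)$ and gives $u_\al\in U_{(\al),y',0}\subseteq U_{(\al),y,0}$. (A further reduction replaces $(x,r)$ with $(x',0)$ for some $x'$ in a chamber when $r>0$, so as to land in this situation.) This is the idea you are missing; once you have it, your proposed bookkeeping is not needed. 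Your derivation of (b) from (a) via \re{unr}(f) is correct and matches the paper.
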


\begin{proof}
(a) It follows from \rl{iwahori} that the product map is injective and that every
$g\in G_{x,r}\cap G_{y,s}$ uniquely decomposes as $g=\prod_{\al} g_{\al}$ such that
$g_{\al}\in U_{(\al),x,r}$. It remains to show that $g_{\al}\in U_{(\al),y,s}$ for all $\al$.

If $(y,s)$ also satisfies the assumption of \rl{iwahori}(b), the assertion follows from \rl{iwahori} together with the observation that the product map $\prod_{\al}\U_{\al}\to \G$ is injective. Thus we may assume that $s=0$.

If $r=0$, then, by our assumption, $x$ lies in a chamber of $[\CA]$. Then every $y'\in [x,y)$, close enough to $y$, lies in a chamber
$\si$ such that $y\in\cl(\si)$. Then $g\in G_x\cap G_y\subseteq G_{y'}$ (by \rl{convex}), thus $g\in G_x\cap G_{y'}$. Thus, by the previous case, $g_{\al}\in U_{(\al),y',0}\subseteq U_{(\al),y,0}$.

Finally, if $r>0$, then there exists a point $x'\in\C{A}$, lying in a chamber of $[\C{A}]$ such that
$G_{x,r}\subseteq G_{x'}$. Thus, $g\in G_{x'}\cap G_{y}$, hence $g_{\al}\in U_{(\al),y,0}$ by the
$r=0$ case.

(b) The assertion (b) follows from (a) and \re{unr}(f).
\end{proof}

\begin{Emp} \label{E:pfmp}
\begin{proof}[Proof of \rp{mp}]
\rl{iwahori} implies all the cases, except the case of $G_x$, which is not Iwahori.
To show the remaining case (which is not used in this work), note that $G_x$ is generated by its Iwahori subgroups $G_y$, where $y$ lies in a chamber $\si\subseteq\CA$ such that $x\in\cl(\si)$. Since  each $G_y$ is generated by $T_0$ and $U_{\psi}$ with $\psi(y)\geq 0$ by \rl{iwahori}(b), and inequality $\psi(y)\geq 0$ implies $\psi(x)\geq 0$, the assertion for $G_x$ follows as well.
\end{proof}
\end{Emp}

\begin{Emp} \label{E:pfconvex}
\begin{proof}[Completion of the proof of \rl{convex}]
As indicated in \re{bpfconvex}, it remains to show that for every $x,y\in\CX$ and $z\in[x,y]$,   we have
$G_{x,r}\cap G_{y,r}\subseteq G_{z,r}$ for $r>0$ and $\g_{x,r}\cap \g_{y,r}\subseteq \g_{z,r}$ for $r\geq 0$.
Choose an apartment $\CA$ of $\CX$ such that $x,y\in\CA$.

By \rco{iwahori}(b), to show that $G_{x,r}\cap G_{y,r}\subseteq G_{z,r}$ for $r>0$ it suffices to show that for every $\psi\in\Psi(\CA)$
such that $\al(x)\geq r$ and $\al(y)\geq r$ we have $\al(z)\geq r$. But this follows from the assumption $z\in[x,y]$.
The proof of the inclusion $\g_{x,r}\cap \g_{y,r}\subseteq \g_{z,r}$ is similar, but easier.
\end{proof}
\end{Emp}

\begin{Emp} \label{E:pfadler}
\begin{proof}[Proof of \rl{adler}]
We show the assertion for $G_{x,r}$, while the assertion for $\g_{x,r}$ is similar but easier.
For $r=0$, the assertion follows from the \re{parah}(d).

Assume now that $r>0$.
Enlarging $F^{\flat}$ if necessary, we can assume that $\G^{\flat}$ is split. Next, replacing $F$ by a finite unramified extension and using \re{remmp}(a), we can assume that $\G$ is quasi-split. Then, using \rl{iwahori}, it remains to show the corresponding equality for tori $T_r=T^0\cap T^{\flat}_{re}$, which was our definition, and a similar equality for each affine root subgroup $U_{(\al),x,r}$.

Finally, using \re{levi}(d),(e) and \re{weil}, we reduce to the case of $\SL{2}$ and $\SU{3}$, which follow from
formulas in \re{sl2} and \re{su3}(f), respectively.
\end{proof}
\end{Emp}

\begin{Emp}
{\bf Remark.}
The formula of \re{su3}(f) also implies that the conclusion of \rl{adler} is false, if $\G$ is
$\SU{3}$, split over a wildly ramified quadratic extension. 
\end{Emp}

\section{Congruence subsets} \label{S:cong}

\begin{Emp}
{\bf Notation.} For every $r\in \B{R}_{\geq 0}$,
we set  $G_r:=\cup_{x\in \CX}G_{x,r}\subseteq G$ and $\g_r:=\cup_{x\in \CX}\g_{x,r}\subseteq \g$.
By construction, both $G_r\subseteq G$ and  $\g_r\subseteq\g$ are open and $\Ad G$-invariant. Moreover, we have
$G_{r^+}=\cup_{s>r}G_s\subseteq G_r$ and $\g_{r^+}=\cup_{s>r}\g_s\subseteq\g_r$.
\end{Emp}

\begin{Emp} \label{E:disc}
{\bf Remark.} (a) The set of $r\in\B{R}_{\geq 0}$ such that $G_{r^+}\neq G_r$ (resp. $\g_{r^+}\neq\g_r$) is discrete.
For example, this follows from the fact that any such $r$ is optimal in the sense of \cite[2.3]{ADB}. Alternatively, this can be seen as follows.

Choose any $r$ such that $G_{r^+}\neq G_r$, and choose a chamber $\si\in [\CX]$. Since all chambers are $G$-conjugate, there exists
$x\in \cl(\si)$ such that $G_{x,r}\neq G_{x,r^+}$. Choose $k\in\B{Z}$ such that $r\in(k,k+1]$.
It thus suffices to show that the set of subgroups  $\{G_{x,s}\}_{x\in\cl(\si),s\in(k,k+1]}$ is finite.

Choose an apartment $\CA\subseteq\CX$ containing $\si$, and fix $x\in\cl(\si)$ and $s\in(k,k+1]$.
Then the set $\{\psi\in \Psi(\CA)\,|\,\psi(x)\geq s\}$ contains the set $\{\psi\in \Psi(\CA)\,|\,\psi(\si)> k+1\}$ and is contained in
the set $\{\psi\in \Psi(\CA)\,|\,\psi(\si)> k\}$. This implies the assertion.

(b) It can be shown that every $r$ from (a) is rational. But even without this fact it follows
from (a) that for every $r\in\B{R}_{\geq 0}$, there exist $r',r''\in\B{Q}_{\geq 0}$ such that
$G_r=G_{r'} $ and $G_{r^+}=G_{r''}$, and similarly for $\g$. Thus \rl{clos} follows from
the following assertion.
\end{Emp}

\begin{Lem} \label{L:closed}
For every $r\in\B{R}_{\geq 0}$, the subsets $G_{r}\subseteq G$, $\g_{r}\subseteq \g$ and $\g^*_{-r}\subseteq\g^*$ are open, closed and stable.
\end{Lem}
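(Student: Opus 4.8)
\emph{Openness and reductions.} Openness is immediate: each $G_{x,r}$, $\g_{x,r}$, $\g^*_{x,-r}$ is open, and $G_r,\g_r,\g^*_{-r}$ are unions of such sets; by \re{disc}(b) we may also assume $r\in\B{Q}_{\geq 0}$, though this is only a convenience. The content is closedness and stability, and the common engine for both is a description of the three sets on the strongly regular semisimple locus. Throughout write $\X\in\{\G,\CG,\CG^*\}$, $X=\X(F)$ for the relevant case, let $q\colon\X\to\X/\!\!/\G$ be the adjoint quotient morphism, and for $x\in X^{\sr}$ let $\TT:=\G_x$ be its centralizing maximal torus, $\mathfrak t:=\Lie\TT$.

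\emph{The key lemma.} I would first prove that for $x\in X^{\sr}$ one has $x\in G_r$ (resp.\ $x\in\g_r$, resp.\ $x\in\g^*_{-r}$) if and only if $x$ lies in the corresponding Moy--Prasad subobject of $\TT$ --- namely $T_r$, resp.\ $\mathfrak t_r$, resp.\ the $\C{O}$-lattice $\mathfrak t^*_{-r}\subseteq(\g^*)^{\TT}$ dual to $\mathfrak t_{r^+}$ (notation as in \re{split}, \re{filtor}). One inclusion is standard: there is a point $y\in\CX(\G)$ adapted to $\TT$ for which $\mathfrak t_r\subseteq\g_{y,r}$, etc. The substantial inclusion is ``$x\in\g_{y,r}$ for some $y$ $\Rightarrow$ $x\in\mathfrak t_r$'', which I would prove by the descent scheme of Appendix~\ref{S:mp}: first pass to a finite unramified $F'/F$ making $\G_{F'}$ quasi-split (using \re{remmp}, and \rl{adler} away from the ``bad'' case), then to the minimal Levi $\M=\Z_\G(\S)$ (using \re{levi}), reducing to semisimple rank $\leq 1$, and there verify the claim from the explicit formulas of \re{sl2}, \re{su3} together with the definition of $T_r$ for tori in \re{filtor}; the $\g^*$-statement then follows by duality since $\g^*_{x,-r}$ is the lattice dual to $\g_{x,r^+}$. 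Since $T_r$, $\mathfrak t_r$, $\mathfrak t^*_{-r}$ depend only on the pair $(\TT,x)$ up to $F$-isomorphism --- hence only on $q(x)$ --- the key lemma is equivalent to the statement $Y\cap X^{\sr}=q^{-1}(C_r)\cap X^{\sr}$, where $Y\in\{G_r,\g_r,\g^*_{-r}\}$ and $C_r:=q(Y\cap X^{\sr})\subseteq(\X/\!\!/\G)(F)$.

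\emph{From the key lemma.} For $x\in X^{\sr}$ the fibre of $q$ through $x$ is the single (closed) orbit $\G\cdot x$, so by \re{stable}(a) the stable orbit of $x$ is exactly $q^{-1}(q(x))\cap X$; hence $Y\cap X^{\sr}=q^{-1}(C_r)\cap X^{\sr}$ is a union of stable orbits, which is the assertion of \re{stable}(d). For closedness, $C_r$ is a ``congruence subset'' of $(\X/\!\!/\G)(F)$ --- it is cut out by the valuation inequalities coming from the definition of the Moy--Prasad filtration of a torus --- hence closed; since $q$ is continuous and $Y$ is open with $Y\cap X^{\sr}$ dense in $Y$, this gives $\ov{Y}=\ov{Y\cap X^{\sr}}\subseteq q^{-1}(C_r)$. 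For the reverse inclusion $q^{-1}(C_r)\subseteq Y$, take $a\in X$ with $q(a)\in C_r$, form the Jordan decomposition $a=a_s+a_n$ (resp.\ $a=a_sa_u$), put $\L:=\Z_\G(a_s)^0$, $\mathfrak z:=\Lie\Z(\L)^0$, $\mathfrak l:=\Lie\L$; then $q(a)=q(a_s)\in C_r$ translates (by the key lemma applied inside $\L$) into $a_s\in\mathfrak z_r$, which lies in $\mathfrak l_{y,r}$ for \emph{every} $y\in\CX(\L)$, while the nilpotent $a_n$ can be $\L$-conjugated into $\mathfrak l_{y,r}$ by pushing $y$ deep into a Weyl chamber of $\L$ (compare \rl{iwahori}(a), \rp{mp}); as $\L$ fixes $a_s$, a single such conjugation puts $a$ into $\mathfrak l_{y,r}\subseteq\g_{y,r}$ (using $\CX(\L)\hra\CX(\G)$ and compatibility of the filtrations), so $a\in\g_r$, and likewise for $\G$ and $\g^*$. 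Hence $Y\subseteq\ov{Y}\subseteq q^{-1}(C_r)\subseteq Y$, so $Y=q^{-1}(C_r)$ is closed.

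\emph{The main difficulty.} The crux is the nontrivial inclusion in the key lemma: that a regular semisimple element lying in some Moy--Prasad lattice $\g_{y,r}$ must already lie in the Moy--Prasad filtration of its own centralizing torus. For tamely ramified tori this is handled by \rl{adler}; the genuinely delicate case is that of wildly ramified tori and ``bad'' groups, where \rl{adler} fails and one must argue directly from the $\SU_3$ computations of \re{su3}. A secondary technical point is ensuring the Jordan-decomposition step survives in positive characteristic (where $a_s$ need not be $F$-rational for general $a$), and noting that when $\X^{\sr}=\emptyset$ --- possible only for $\g,\g^*$ in characteristic two --- stability is vacuous while closedness must instead be deduced from the case $\X=\G$.
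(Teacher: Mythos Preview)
Your approach is genuinely different from the paper's, and the difference is worth recording --- but there is a real gap in your ``key lemma''.

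\textbf{The gap.} Your descent scheme for the hard inclusion of the key lemma (``$x\in\g_{y,r}$ for some $y$ implies $x\in\mathfrak t_r$'') does not work as written. Passing to an unramified $F'$ making $\G$ quasi-split, the minimal Levi $\M=\Z_\G(\S)$ becomes a maximal torus $\TT_0$, and the root-space decomposition of $\g_{y,r}$ in \rl{iwahori}(a) is relative to \emph{that} torus. But for a general $x\in\g^{\sr}$ the centralizer $\TT=\G_x$ need not be $\G(F')$-conjugate to $\TT_0$: it may be anisotropic or ramified, so $x$ does not lie in $\Lie\TT_0$ and the Levi/rank-one reduction of \re{levi} simply does not apply to the pair $(x,\TT)$. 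The statement you want is true --- it is essentially a ``depth is computed on the centralizer'' result in the spirit of Adler--DeBacker --- but it requires passing to a splitting field of $\TT$ (not just of $\G$), then arguing that the convex, $\TT$-invariant set $\CX(x,r)$ must meet the apartment of $\TT$; none of this is in your sketch. You flag the Jordan-decomposition issue in positive characteristic but do not resolve it, and your claim that $C_r$ is closed also presupposes the key lemma in a form you have not established.

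\textbf{What the paper does instead.} The paper avoids the Chevalley quotient and Jordan decomposition entirely. For closedness it reuses the combinatorial machinery already built for the projector: $G_0$ is closed because it is the set of compact elements of $G^0$; for $r>0$ it suffices that each $G_x\cap G_r$ is compact, and this is shown by proving $G_x\cap G_r=G_x\cap G_{\Si,r}$ once $\Si\supseteq\Upsilon_{x,r}$, via the Euler-characteristic identity $1_{G_{\Si,r}}=\sum_{\si\in\Si}(-1)^{\dim\si}1_{G_{\si,r}}$ (from \rl{convex}) together with the same cancellation argument as in \rp{stab}(a), reduced to the equality $G_x\cap G_{\si,r}=G_x\cap G_{\si',r}$ which follows from \rco{iwahori}(b). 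For stability the paper gives a short fixed-point argument: stably conjugate $g,g'\in G^{\sr}$ are $\G(F^{\flat})$-conjugate for some finite unramified $F^{\flat}/F$, the set $\CX^{\flat}(g',r)$ is nonempty (since $g\in G_r\subseteq G^{\flat}_r$), convex (\rl{convex}), and $\Gal(F^{\flat}/F)$-stable, hence has a Galois-fixed point by Bruhat--Tits; that point lies in $\CX$ by \re{bc} and witnesses $g'\in G_r$. This is both shorter and characteristic-free.

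\textbf{What your approach would buy.} If the key lemma were proved, you would get the pleasant invariant-theoretic description $G_r=q^{-1}(C_r)$ (and likewise for $\g_r$, $\g^*_{-r}$), which makes stability immediate and gives closedness as a corollary of $C_r$ being closed in the Chevalley base. That is a conceptually attractive picture, but it front-loads the difficulty into a statement that is harder than the lemma you are trying to prove.
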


\begin{Emp}
{\bf Remark.} Under some mild restriction on the residual characteristic of $F$ one can show a more precise result (with a simpler proof) asserting that $G_{r}$ (resp. $\g_{r}$, resp. $\g^*_{-r}$) is equal to the full preimage of a certain open and compact subset of the corresponding Chevalley space.
\end{Emp}

\begin{proof}
First we show that $G_0\subseteq G$ is closed. By \re{parah}, the subgroup $G^0\subseteq G$ is closed, and $G_0=\cup_{x\in\CX}\Stab_{G^0}G(x)$.
Then, by the Bruhat--Tits fixed point theorem, $G_0$ coincides with the set of
all compact elements of $G^0$. But the set of all compact elements of $G$ is closed.
Indeed, choose a faithful representation $\rho:\G\hra\GL_n$, and notice that $g\in G$ is compact
if and only if  $\det\rho(g)\in\C{O}\m$ and the characteristic polynomial of $\rho(g)$ has coefficients in $\C{O}$.

Next we show that $G_r\subseteq G$ is closed for $r>0$. Since $G_0=\cup_{x\in\CX} G_{x}\subseteq G$ is closed,
and each $G_x$ is open and compact, it remains to show that for every $x\in\CX$, the intersection $G_x\cap G_r$ is compact.
By \re{disc}(b), we may assume that $r\in\B{Q}$, hence $r\in \frac{1}{m}\B{Z}_{\geq 0}$ for some $m\in\B{N}$.
As in \re{pflie}, for every $\Si\in\T_m$, we set $G_{\Si,r}:=\cup_{\si\in \Si}G_{\si,r}$. Then each
$G_x\cap G_{\Si,r}$ is compact, and it suffice to show that
$G_x\cap G_r= G_x\cap G_{\Si,r}$ for every $\Si\supseteq\Upsilon_{x,r}$. Equivalently, it suffices to show
 the equality of functions $1_{G_x}\cdot 1_{G_{\Si,r}}=1_{G_x}\cdot 1_{G_{\Si',r}}$ for every
$\Si',\Si\in\T_m$ such that $\Upsilon_{x,r}\subseteq\Si'\subseteq\Si$.

As in \rl{ep}, we deduce from \rl{convex} that for every $\Si\in \T_m$ we have
$1_{G_{\Si,r}}=\sum_{\si\in\Si}(-1)^{\dim\si}1_{G_{\si,r}}$.
Thus we have to show that for every $\Si'\subseteq\Si$ as above, we have
$\sum_{\si\in\Si\sm\Si'}(-1)^{\dim\si}(1_{G_x}\cdot 1_{G_{\si,r}})=0$. Arguing as in \rp{stab}(a),
it remains to show that for every $\si,\si'\in [\CX_m]$ with $\si'\preceq\si$ and $\si\in\Gm_r(\si',x)$
we have the equality $1_{G_x}\cdot 1_{G_{\si,r}}=1_{G_x}\cdot 1_{G_{\si',r}}$. Equivalently, we have to show that
$G_x\cap G_{\si,r}=G_x\cap G_{\si',r}$, that is, $G_x\cap G_{\si',r}\subseteq G_{\si,r}$.

Choose an apartment $\CA\subseteq\CX$, containing  $\si,x$. By \rco{iwahori}(b), the intersection $G_x\cap G_{\si',r}$
is generated by $M_r$ and the affine root subgroups $U_{\psi}$, where $\psi\in \Psi(\CA)$ satisfies $\psi(x)\geq 0$
and $\psi(\si')\geq r$. Thus we have to show that for every $\psi\in \Psi(\CA)$ such that $\psi(x)\geq 0$ and
$\psi(\si')\geq r$ we have $\psi(\si)\geq r$. Replacing $\psi$ with $r-\psi$ it suffices to show that for every
$\psi\in \Psi_m(\CA)$ with $\psi(x)\leq r$ and $\psi(\si')\leq 0$, we have $\psi(\si)\leq 0$. But this is precisely
the assumption $\si\in\Gm_r(\si',x)$.

This shows that every $G_r$ is closed. To show that $G_r$ is stable, we need to show that for every $\G(\ov{F})$-conjugate
$g,g'\in G^{\sr}$ such that $g\in G_r$, we have $g'\in G_r$. In other words, we have to show that the subset
$\CX(g',r)\subseteq\CX$ consisting of all $x\in\CX$ such that $g'\in G_{x,r}$, is non-empty.

Since $g$ and $g'$ are $\G(\ov{F})$-conjugate, and
$F^{\nr}$ is of cohomological dimension one, we conclude that  $g,g'$ are $\G(F^{\nr})$-conjugate,
thus $\G(F^{\flat})$-conjugate for some finite unramified extension $F^{\flat}/F$. Set $\G^{\flat}:=\G_{F^{\flat}}$,
$\CX^{\flat}:=\CX(\G^{\flat})$ and $\Gm^{\flat}:=\Gal(F^{\flat}/F)$.
Then $g\in G_r\subseteq G^{\flat}_r$, hence $g'\in G\cap G^{\flat}_r$, because $G^{\flat}_r$ is $\Ad G^{\flat}$-invariant.
Thus the subset $\CX^{\flat}(g',r)\subseteq\CX^{\flat}$ is non-empty. On the other hand, $\CX^{\flat}(g',r)$ is
$\Gm^{\flat}$-invariant, because $g'\in G$, and convex, by \rl{convex}. Thus, by the Bruhat--Tits fixed point theorem,
the set of fixed points $\CX^{\flat}(g',r)^{\Gm^{\flat}}$ is non-empty.
Since $\CX^{\flat}(g',r)^{\Gm^{\flat}}$ equals $\CX^{\flat}(g',r)\cap\CX=\CX(g',r)$ (by \re{bc} and \re{remmp}(a)),
we are done.

The proof for $\g_{r}$ is similar. Namely, for every
$x\in\CX$, we have $\g=\cup_n \varpi^{-n}\g_x$. Thus to show that  $\g_{r}\subseteq \g$ is closed, it remains to show that every
$\g_{r}\cap \varpi^{-n}\g_x$ is compact. Since $\varpi^n\g_r=\g_{r+n}$ (see \re{lattice}(b)), it remains to show
that the intersection $\g_{r+n}\cap \g_x$ is compact. This can be shown as in the group case.

Finally, the prove the result for $\g^*_{-r}$ we can either mimic the proof for $\g_{r}$, using the decomposition for $\g^*_{x,-r}$,
obtained from \rl{iwahori}(a) by duality, or to deduce it from a Lie algebra version of \rp{stab}(a) by the Fourier transform.
\end{proof}

\begin{Emp} \label{E:pfisog}
\begin{proof}[Proof of \rl{isog}]
It suffices to show that $\pi$ induces bijections $\pi_x:G'_{x,r^+}\isom G_{x,r^+}$ and $\pi_{x,y}:G'_{x,r^+}\cap G'_{y,r^+}\isom G_{x,r^+}\cap G_{y,r^+}$ for $x,y\in\CX(\G)=\CX(\G')$. Indeed, the surjectivity of $G'_{r^+}\isom G_{r^+}$ follows from the surjectivity of the $\pi_x$'s, while injectivity follows from the surjectivity of the $\pi_{x,y}$'s and the injectivity of the $\pi_x$'s.

Replacing $F$ by $F'$ as in \re{unr}, we may assume that $\G$ and $\G'$ are quasi-split over $F$. Choose an apartment $\CA\ni x,y$, corresponding to a maximal split torus $\S\subseteq\G$, and set $\TT:=\Z_\G(\S)$, and $\TT':=\pi^{-1}(\TT)\subseteq\G'$. Then
$\TT\subseteq \G$ is a maximal torus, and we have decompositions
$G_{x,r^+}=T_{r^+}\times \prod_{\al}U_{(\al),x,r^+}$ (by \rl{iwahori}) and $G_{x,r^+}\cap G_{y,r^+}=T_{r^+}\times \prod_{\al}(U_{(\al),x,r^+}\cap U_{(\al),y,r^+})$
(by \rco{iwahori}), and similarly for $\G'$.

Since $\pi$ induces isomorphisms between the $U_{\al}$'s,
it remains to show that the induced map $T'_{r^+}\to T_{r^+}$ is an isomorphism. If $\TT$ and $\TT'$ are split, the assertion is easy.
Namely, $\pi$ induces a morphism of $\B{F}_p$-vector spaces $\pi_n:T'_n/T'_{n+1}\to T_n/T_{n+1}$ for every $n>0$.
Hence each $\pi_n$ is an isomorphism, because the degree of $\pi$ is prime to $p$. Therefore $T'_{r^+}\to T_{r^+}$ is an isomorphism as well.

In general, let $F^{\flat}$ be the splitting field of $\TT$ (and $\TT'$), and let $e$ be the ramification degree of $F^{\flat}/F$. Set $r^{\flat}:=er$, and  $\Gm^{\flat}:=\Gal(F^{\flat}/F)$. Then $T_{r^+}=\Ker w_{\TT}\cap \TT(F^{\flat})^{\Gm^{\flat}}_{(r^{\flat})^+}$, where $w_\TT$ is the Kottwitz homomorphism $\TT(F^{\nr})\to X_*(\TT)_{\Gm_{\nr}}$ (see \re{filtor}(a)), and similarly for $\TT'$.  By the split case, $\pi$ induces an isomorphism $\TT'(F^{\flat})^{\Gm^{\flat}}_{(r^{\flat})^+}\isom\TT(F^{\flat})^{\Gm^{\flat}}_{(r^{\flat})^+}$ of pro-$p$-groups. By the functoriality of the Kottwitz homomorphism, it remains to check that every element in the kernel of the homomorphism $X_*(\TT')_{\Gm_{\nr}}\to X_*(\TT)_{\Gm_{\nr}}$ is torsion of prime to $p$ order. Since this kernel is killed by $\deg\pi$, the proof is complete.
\end{proof}
\end{Emp}

\section{Quasi-logarithms} \label{S:qlog}

\begin{Emp} \label{E:qlog}
{\bf Quasi-logarithms.} Let $\G$ be a reductive group over a field $F$.

(a) Following \cite[1.8]{KV}, we call an  $\Ad \G$-equivariant morphism of algebraic varieties $\C{L}:\G\to \CG$ a {\em quasi-logarithm}, if
$\C{L}(1)=0$, and the induced map on tangent spaces $d\C{L}_1:\g=T_1(G)\to T_0(\g)=\g$ is the identity map.

(b) Let $F^{\flat}/F$ be a field extension. Then a quasi-logarithm $\C{L}:\G\to\CG$ induces a quasi-logarithm
$\C{L}_{F^{\flat}}:\G_{F^{\flat}}\to \CG_{F^{\flat}}$. Conversely, a quasi-logarithm $\C{L}^{\flat}:\G_{F^{\flat}}\to \CG_{F^{\flat}}$ induces a
quasi-logarithm $\R_{F^{\flat}/F}(\C{L}^{\flat}):\R_{F^{\flat}/F}(\G_{F^{\flat}})\to \R_{F^{\flat}/F}(\CG_{F^{\flat}})=\CG\otimes_F F^{\flat}$.

(c) Since $\C{L}$ is $\Ad\G$-equivariant, it induces a morphism $[\C{L}]:\c_\G\to \c_{\CG}$ of the corresponding Chevalley spaces
(compare \cite[5.2]{KV2}).
\end{Emp}

\begin{Emp} \label{E:qlogo}
{\bf Quasi-logarithms defined over $\C{O}$.} Let $F$ be a local non-archimedean field of residual characteristic $p$.

(a) Assume that $\G$ is split over $F$. Then the Chevalley spaces $\c_\G$ and $\c_{\CG}$  have natural structures over
$\C{O}$. In this case, we say that a quasi-logarithm $\C{L}:\G\to\CG$ is {\em defined over $\C{O}$},
if the corresponding map  $[\C{L}]$ is defined over $\C{O}$ (compare  \cite[5.2]{KV2}). Note that by
\cite[Lem 5.2.1]{KV2} this notion is equivalent to the corresponding notion of \cite[1.8.8]{KV}.

(b) For an arbitrary $\G$, we say that $\C{L}:\G\to\CG$ is defined over $\C{O}$, if
$\C{L}_{F^{\flat}}$ is defined over $\C{O}_{F^{\flat}}$ for some or, equivalently, every splitting field $F^{\flat}$ of $\G$.

(c) Let $F^{\flat}/F$ be a finite Galois extension, and let $\C{L}^{\flat}:\G_{F^{\flat}}\to \CG_{F^{\flat}}$ be a quasi-logarithm defined
over $\C{O}_{F^{\flat}}$. Then the quasi-logarithm $\R_{F^{\flat}/F}(\C{L}^{\flat})$ (see \re{qlog}(b)) is also defined over $\C{O}$.

(d) In the situation of (c), assume that $[F^{\flat}:F]$ is prime to $p$. Then the composition
\[
\C{L}:\G\hra \R_{F^{\flat}/F}\G_{F^{\flat}}\overset{\R_{F^{\flat}/F}(\C{L}^{\flat})}{\lra}\CG\otimes_F F^{\flat}\overset{\frac{1}{[F^{\flat}:F]}\Tr_{F^{\flat}/F}}{\lra}\CG
\]
is a quasi-logarithm defined over $\C{O}$.
\end{Emp}

\begin{Lem} \label{L:qlogo}
Assume that $\G$ is semisimple and simply connected and $p$ is very good for $\G$ (see \re{verygood}).
Then $\G$ admits a quasi-logarithm defined over $\C{O}$.
\end{Lem}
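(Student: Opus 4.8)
The strategy is to reduce, via \re{qlogo}(c)--(d), to the case of an absolutely simple, simply connected group over a field, and there to exhibit a quasi-logarithm defined over $\C{O}$ by hand. First I would use the decomposition $\G=\prod_i\R_{F_i/F}\H_i$ from \re{verygood}(a) together with \re{qlog}(b): since a quasi-logarithm on a product is a product of quasi-logarithms, and since $\R_{F_i/F}$ of a quasi-logarithm defined over $\C{O}_{F_i}$ is again defined over $\C{O}$ (by \re{qlogo}(c)), it suffices to treat a single absolutely simple simply connected group $\H$ over a finite extension $F_i/F$. Renaming, we may assume $\G=\H$ is absolutely simple and simply connected over $F$, and we only need a quasi-logarithm $\C{L}:\G\to\CG$ defined over $\C{O}$.

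Next I would split according to whether $\G$ is split. If $\G$ is already split over $F$, one constructs the quasi-logarithm directly from a faithful representation: choose a Chevalley $\C{O}$-form $\frak{G}\subseteq\GL_{n,\C{O}}$ whose existence follows from \re{rig}(c)-type considerations (pick a minuscule or adjoint-type faithful representation), and set $\C{L}(g):=\frac{1}{c}\bigl(\rho(g)-\tfrac{\Tr\rho(g)}{n}\Id\bigr)$ projected onto $\g$ via the trace form, where $c$ is the normalizing scalar making $d\C{L}_1=\Id$; the point is that under the ``very good'' hypothesis the relevant constant $c$ (essentially the Dynkin index of $\rho$, or a small integer attached to the trace form on $\g$) is a $p$-adic unit, so this formula is defined over $\C{O}$. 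Concretely, one can take $\rho$ to be the adjoint representation when $p$ is very good: then the Killing form is non-degenerate over $\C{O}$, $\g\subseteq\End(\g)$ is a direct $\C{O}$-summand, and $\C{L}(g):=\pr_{\g}(\Ad g-\Id)$ (suitably normalized) works; the ``very good'' condition is exactly what guarantees the normalizing scalar is a unit and that the Chevalley space map $[\C{L}]$ is defined over $\C{O}$ in the sense of \re{qlogo}(a). If $\G$ is not split, let $F^{\flat}/F$ be its splitting field; since $\G$ is absolutely simple the degree $[F^{\flat}:F]$ divides the order of the outer automorphism group ($1$, $2$, $3$, or $6$), and the ``very good'' hypothesis forces $[F^{\flat}:F]$ to be prime to $p$ (for $p=3$ the only allowed outer degrees are $\le 2$, for $p=5$ likewise, for $p=2$ the good-prime conditions rule out the bad cases, and for $p>5$ there is nothing to check). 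Then apply the split construction over $F^{\flat}$ to get $\C{L}^{\flat}:\G_{F^{\flat}}\to\CG_{F^{\flat}}$ defined over $\C{O}_{F^{\flat}}$, and use the averaging construction of \re{qlogo}(d), $\C{L}:=\tfrac{1}{[F^{\flat}:F]}\Tr_{F^{\flat}/F}\circ\R_{F^{\flat}/F}(\C{L}^{\flat})|_{\G}$, which \re{qlogo}(d) tells us is a quasi-logarithm defined over $\C{O}$ precisely because $[F^{\flat}:F]$ is prime to $p$.

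The main obstacle, and the place where the ``very good'' hypothesis is genuinely used, is verifying that the normalizing constant in the split construction is a $p$-adic unit and that the resulting map on Chevalley spaces is integral. This is where one must go type-by-type: for type $A_n$ the naive $\frac{1}{n+1}$-type correction from $\GL_{n+1}$ forces $p\nmid n+1$, but passing instead to $\SL_{n+1}$ and using the trace form on $\frak{sl}_{n+1}$ the obstruction becomes $p\nmid n$, which is exactly the extra clause in the definition of ``very good'' (\re{verygood}(c)); for the other types the obstruction is $p$ good in the usual sense, matching \re{verygood}(b). I would organize this as a short case analysis showing the relevant scalar (the value of the chosen invariant bilinear form on a coroot, or equivalently a Dynkin-type index) is a unit under the stated hypotheses, and then invoke \cite[Lem 5.2.1]{KV2} to conclude that ``defined over $\C{O}$'' in the sense used here is satisfied. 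Once that lemma's hypotheses are checked, the descent and restriction-of-scalars steps are formal given \re{qlogo}(c),(d).
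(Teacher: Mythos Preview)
Your reduction to absolutely simple factors via \re{qlogo}(c) matches the paper, and so does the idea of averaging over the splitting field via \re{qlogo}(d). But there is a genuine gap in the passage to the split case.

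You claim that for an absolutely simple simply connected group $\G$, the degree of the splitting field $F^{\flat}/F$ divides the order of the outer automorphism group of the Dynkin diagram. This is only true when $\G$ is quasi-split: in that case the splitting field is governed by the $*$-action $\Gal(\ov{F}/F)\to\Aut(\text{Dynkin})$, whose image has order dividing $|\text{Out}|$. For a general inner form this fails badly. For instance $\G=\SL{1}(D)$ with $D$ a central division algebra of degree $d$ over $F$ is absolutely simple of type $A_{d-1}$, yet its splitting field has degree $d$, not $\leq 2$. So your averaging step \re{qlogo}(d) cannot be applied directly: $[F^{\flat}:F]$ need not be prime to $p$.

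The paper inserts one additional move before extending scalars: it replaces $\G$ by its quasi-split inner form, citing \cite[Lem 1.8.9]{KV}, which says (in effect) that the existence of a quasi-logarithm defined over $\C{O}$ is an inner-form invariant, since quasi-logarithms are $\Ad\G$-equivariant and the Chevalley spaces $\c_\G$, $\c_{\CG}$ only depend on the inner class. Once $\G$ is quasi-split, your argument about $[F^{\flat}:F]$ dividing $|\text{Out}|$ is correct, and the ``good'' hypothesis in \re{verygood}(b) is precisely designed so that this degree is prime to $p$ (note the clause $[F_i[\H_i^*]:F_i]\leq 2$ when $p=3$, ruling out triality $D_4$). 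In the split case, the paper does not use the adjoint representation uniformly: it uses the standard representation for classical types and the adjoint one for exceptional types, because the trace form of the adjoint representation (the Killing form) picks up extra factors that can kill non-degeneracy in type $A$; your discussion of the normalizing constant there is also slightly off (the trace form on $\frak{sl}_{n+1}$ is non-degenerate iff $p\nmid n+1$, not $p\nmid n$).
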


\begin{proof}
(compare \cite[Lem 1.8.12]{KV}). Assume that $\G=\prod_i \R_{F_i/F}\H_i$ as in \re{verygood}. By \re{qlogo}(c), we can replace $\G$ by $\H_i$, thus assuming that $\G$ is absolutely simple. Using \cite[Lem 1.8.9]{KV}, we can replace $\G$ by its quasi-split inner form.
Since $p$ is good, $\G$ splits over a tamely ramified extension. Hence,
using \re{qlogo}(d), we may extend scalars to the splitting field of $\G$, thus assuming that $\G$ is split. In this case,
the assertion was shown in \cite[Lem 1.8.12]{KV}, using the fact that $\G$ has a faithful representation, whose Killing form is
non-degenerate over $\C{O}$.  Namely, one uses the standard representation, if $\G$ is classical, and the adjoint representation, if
$\G$ is exceptional.
\end{proof}

\begin{Lem} \label{L:qlog}
Let $\G$ be semisimple and simply connected, $p\neq 2$, and let  $\C{L}:\G\to \CG$ be a quasi-logarithm defined over $\C{O}$. Then for every
$x\in \CX$ and $r\in\B{R}_{\geq 0}$, $\C{L}$ induces analytic isomorphisms $\C{L}_{r}:G_{r^+}\isom\g_{r^+}$ and $\C{L}_{x,r}:G_{x,r^+}\isom\g_{x,r^+}$.
\end{Lem}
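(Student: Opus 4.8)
The plan is to realize the desired maps as restrictions of $\C{L}$ itself and to control them via the Moy--Prasad filtration. Since $\C{L}\colon\G\to\CG$ is a morphism of varieties, on $F$-points it is analytic and sends $G=\G(F)$ into $\g=\CG(F)$, and $\Ad\G$-equivariance gives $\C{L}(\Ad g(h))=\Ad g(\C{L}(h))$. Because $G_{r^+}=\cup_{x\in\CX}G_{x,r^+}$ and $\g_{r^+}=\cup_{x\in\CX}\g_{x,r^+}$ are open covers and $\C{L}$ is globally defined, the whole statement reduces to two points: (i) for every $x\in\CX$ one has $\C{L}(G_{x,r^+})=\g_{x,r^+}$ and $\C{L}|_{G_{x,r^+}}$ is injective with analytic inverse; and (ii) $\C{L}$ restricts to a bijection $G_{x,r^+}\cap G_{y,r^+}\isom\g_{x,r^+}\cap\g_{y,r^+}$ for all $x,y\in\CX$. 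Item (ii) is exactly what forces the local inverses to agree on overlaps, so that $\C{L}_r:=\C{L}|_{G_{r^+}}$ is a well-defined $\Ad G$-equivariant homeomorphism onto $\g_{r^+}$ restricting to the $\C{L}_{x,r}$ of (i).

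First I would pass to a finite unramified extension to assume $\G$ quasi-split, which is harmless: by \re{remmp} (and the evident Lie-algebra analogue) $G_{x,s}$ and $\g_{x,s}$ are the $\Gm^{\flat}$-fixed points of their counterparts over the extension, $\C{L}$ is $\Gal$-equivariant, and the notion ``defined over $\C{O}$'' is insensitive to unramified base change. Fix $x\in\CX$ and $s>0$. Then $G_{x,s}$ and $\g_{x,s}$ sit in decreasing filtrations whose jumps form a discrete set $s=s_0<s_1<\cdots\to\infty$, one has $G_{x,s}=\varprojlim_t G_{x,s}/G_{x,t}$ and $\g_{x,s}=\varprojlim_t\g_{x,s}/\g_{x,t}$, and for each jump $s_i>0$ the quotient $G_{x,s_i}/G_{x,s_i^{+}}$ is a finite $\B{F}_p$-vector space canonically identified with $\g_{x,s_i}/\g_{x,s_i^{+}}$ by the Moy--Prasad isomorphism.

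The core of the argument is the claim that $\C{L}(G_{x,s})\subseteq\g_{x,s}$ for every $s>0$ and that the induced map on each graded piece $G_{x,s_i}/G_{x,s_i^{+}}\to\g_{x,s_i}/\g_{x,s_i^{+}}$ is precisely that Moy--Prasad isomorphism. Heuristically this holds because, in a faithful $\C{O}$-integral model, $\C{L}(g)=(g-1)+(\text{terms of order}\ge 2\text{ in }g-1)$ with $d\C{L}_1=\Id$: the higher-order terms fall into the next step of the filtration, while the hypothesis that $\C{L}$ is defined over $\C{O}$ — phrased via the Chevalley spaces $\c_\G\to\c_{\CG}$ in \re{qlogo} — forbids denominators. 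To turn this into a proof at a non-hyperspecial point I would decompose $G_{x,s}$, $\g_{x,s}$, $G_{x,s}\cap G_{y,s}$, $\g_{x,s}\cap\g_{y,s}$ into torus and affine-root-group pieces using \rl{iwahori}, \rp{mp} and \rco{iwahori}, reducing the verification to the rank-one groups $\SL{2}$ and $\SU_{3}$, where the explicit descriptions of \re{sl2} and \re{su3} are available and where the hypothesis $p\neq2$ (which excludes the ``bad'' case, cf.\ \rl{adler}) is used; simple connectedness guarantees that $G^{0}=G$ and that the Chevalley-space description applies. I expect this step — relating the Chevalley-space condition ``defined over $\C{O}$'' to the Moy--Prasad lattices at an arbitrary point of the building — to be the main obstacle.

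Granting the claim, the identities $G_{x,r^+}=\varprojlim_t G_{x,r^+}/G_{x,t}$ and $\g_{x,r^+}=\varprojlim_t\g_{x,r^+}/\g_{x,t}$ together with the induced isomorphisms on all subquotients force $\C{L}\colon G_{x,r^+}\to\g_{x,r^+}$ to be a bijection; applying the same filtration argument to $G_{x,r^+}\cap G_{y,r^+}$, whose root-group structure is of the same type by \rco{iwahori}, yields (ii). Finally, $\C{L}$ is algebraic hence analytic, and $\C{L}^{-1}$ is analytic either by formal inversion of the filtered, unipotently-triangular map $\C{L}_{x,r}$, or by the inverse function theorem, using that the differential $d\C{L}_g$ is invertible at every $g\in G_{r^+}$ (it is invertible near $1$ since $d\C{L}_1=\Id$, hence everywhere on $G_{r^+}$ by $\Ad$-equivariance, as over $\ov{F}$ every element of $G_{r^+}$ is conjugate to one close to $1$). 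Gluing the $\C{L}_{x,r}$ as above produces $\C{L}_r$, completing the plan.
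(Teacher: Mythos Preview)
Your filtration strategy is a natural direct attack, and several of the surrounding observations are correct: simple connectedness gives $G^{0}=G$, so \rl{adler} applies (using $p\neq 2$), and once you know global injectivity of $\C{L}|_{G_{r^+}}$ together with bijectivity of each $\C{L}_{x,r}$, your item (ii) on intersections follows automatically and the gluing is immediate.

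The genuine gap is the step you yourself flag as ``the main obstacle''. Your proposed resolution --- decompose $G_{x,s}$ into the product of the $U_{(\al),x,s}$ via \rl{iwahori} and \rco{iwahori}, and then reduce to $\SL{2}$ and $\SU_{3}$ --- does not work for a quasi-logarithm. The map $\C{L}$ is neither a group homomorphism nor compatible with the root decomposition: there is no reason for $\C{L}(U_{(\al),x,s})$ to land in $\u_{\al}$ (or even in $\g_{x,s}$), and from $\C{L}(g_{\al})\in\g_{x,s}$ for each factor one cannot conclude $\C{L}(\prod_{\al}g_{\al})\in\g_{x,s}$. The heuristic ``$\C{L}(g)=(g-1)+\text{higher order}$'' presupposes that $\C{L}$ arises from a specific $\C{O}$-integral faithful representation, whereas the only hypothesis available is the Chevalley-space condition $[\C{L}]:\c_{\G}\to\c_{\CG}$ over $\C{O}$, which constrains invariant polynomials, not the root components. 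So the inclusion $\C{L}(G_{x,s})\subseteq\g_{x,s}$ at a non-hyperspecial $x$ is not established by your argument.

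The paper bypasses this entirely. It takes as a black box from \cite{KV} the split case with $x$ hyperspecial and $r\in\B{Z}$ (where the Chevalley-space condition \emph{does} translate into the relevant lattice statement), and reduces the general $(x,r)$ to that case by base change. First split $\G$ by a suitable extension; then perturb $(x,r)$ slightly so that $r\in\frac{1}{m}\B{Z}$ and $x$ is a hyperspecial vertex of $[\CX_m]$; finally pass to a finite extension $F^{\flat}/F$ of ramification degree $m$, so that $r^{\flat}:=mr\in\B{Z}$ and $x$ becomes hyperspecial in $\CX(\G^{\flat})$. Since $\G$ is simply connected and not ``bad'', \rl{adler} gives $G_{x,r^+}=(G^{\flat}_{x,(r^{\flat})^+})^{\Gm^{\flat}}$ and $\g_{x,r^+}=(\g^{\flat}_{x,(r^{\flat})^+})^{\Gm^{\flat}}$; the $\Gm^{\flat}$-equivariance of $\C{L}^{\flat}=\C{L}_{F^{\flat}}$ then lets one descend the isomorphism $\C{L}^{\flat}_{x,r^{\flat}}$ to $\C{L}_{x,r}$. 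Global injectivity comes from the known $r=0$ case. The moral: rather than proving the filtration inclusion at every point of the building, one transports the point to a hyperspecial one by a ramified extension.
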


\begin{proof}
Assume first that $\G$ is split.
The assertion for $r=0$ was shown in \cite[Prop 1.8.16]{KV}. Next we show that $\C{L}$ induces an analytic isomorphism $\C{L}_{x,r}:G_{x,r^+}\to \g_{x,r^+}$ when
$x\in \CX$ is a hyperspecial vertex and $r=n\in\B{Z}$. In this case, $G_{x,r^+}=G_{x,n+1}$ and
$\g_{x,r^+}=\g_{x,n+1}$, so we have to show that $\C{L}$ induces an analytic isomorphism $G_{x,n+1}\isom\g_{x,n+1}$.
This is easy and it was shown in the course of the proof of \cite[Prop 1.8.16]{KV}. We are going to deduce the general case
from the particular case shown above.

Let $F^{\flat}/F$ be a finite Galois extension of ramification degree $e$, and set $r^{\flat}:=er$, $\Gm^{\flat}:=\Gal(F^{\flat}/F)$ and  $\G^{\flat}:=\G_{F^{\flat}}$. Then $\C{L}$ induces a quasi-logarithm $\C{L}^{\flat}:=\C{L}_{F^{\flat}}: \G^{\flat}\to \CG^{\flat}$, which is $\Gm^{\flat}$-equivariant and defined over $\C{O}_{F^{\flat}}$.
Moreover, since $\G$ is semisimple and
simply connected, we have $G^0=G$ (see \re{parah}). Since $p\neq 2$, we have  $G_{x,r^+}=(G^{\flat}_{x,(r^{\flat})^+})^{\Gm^{\flat}}$ and  $\g_{x,r^+}=(\g^{\flat}_{x,(r^{\flat})^+})^{\Gm^{\flat}}$
(by \rl{adler}).

Note that the assertion for $\C{L}^{\flat}$ and $r^{\flat}$ implies that for $\C{L}$ and $r$. Indeed, if $\C{L}^{\flat}$ induces an isomorphism
$\C{L}^{\flat}_{x,r^{\flat}}$, then it is automatically $\Gm^{\flat}$-equivariant, thus induces an isomorphism
$\C{L}_{x,r}:=(\C{L}^{\flat}_{x,r^{\flat}})^{\Gm^{\flat}}$ of Galois invariants. Therefore $\C{L}$ induces a morphism
$\C{L}_{r}:G_{r^+}\to \g_{r^+}$, which is surjective, because each  $\C{L}_{x,r}$ is surjective, and injective, because  $\C{L}^{\flat}_{r^{\flat}}$ is injective. Thus we can replace $F$ by $F^{\flat}$, $\G$ by $\G^{\flat}$, and $r$ by $r^{\flat}$.

Now the assertion is easy. Indeed, choosing $F^{\flat}$ to be a splitting field of $\G$, we can assume that $\G$ is split.
Since $\C{L}_{0}$ is injective, it is enough to show that $\C{L}$ induces an isomorphism $\C{L}_{x,r}$.
Observe that both $G_{x,r^+}$ and $\g_{x,r^+}$ do not change if we replace pair
$(x,r)$ by a close pair $(x',r')$. Thus we may assume
that $r\in\frac{1}{m}\B{Z}_{\geq 0}$ and $x$ is a hyperspecial vertex of $[\CX_m(\G)]$ for some $m$.

Choose a finite extension $F^{\flat}$ of $F$ of ramification degree $m$. Then $r^{\flat}=mr\in\B{N}$,  and
$x$ is a hyperspecial vertex of $[\CX_m(\G)]\subseteq [\CX(\G^{\flat})]$. Hence the assertion for
$\C{L}^{\flat}_{x,r^{\flat}}$, shown in the first paragraph of the proof, implies the assertion for $\C{L}_{x,r}$.
\end{proof}


\begin{thebibliography}{99}
\bibitem[Ad]{Ad}
J. Adler, {\em Refined anisotropic K-types and supercuspidal representations},
Pacific J. Math. {\bf 185} (1998), 1--32.

\bibitem[ADB]{ADB}
J. Adler and S. DeBacker, {\em Some applications of
Bruhat--Tits theory to harmonic analysis on the Lie algebra of a reductive p-adic group},
Michigan Math. J. {\bf 50} (2002), 263--286.

\bibitem[Be]{Be}
J. Bernstein, {\em Representation of $p$-adic groups}, Harvard notes by K. Rumelhart, 1992,
avaliable at http://www.math.tau.ac.il/~bernstei/.

\bibitem[BD]{BD}
J. Bernstein, {\em Le ``centre'' de Bernstein}, edited by P.
Deligne, in {\em Representations of reductive groups over a local
field}, 1--32, Hermann, Paris, 1984.

\bibitem[BKV]{BKV}
R. Bezrukavnikov, D. Kazhdan and Y. Varshavsky, {\em A categorical approach to the stable center conjecture},
Ast\'erisque {\bf 369} (2015), 27--97.

\bibitem[Bo1]{Bo}
A. Borel, {\em Admissible representations of a semi-simple group
over a local field with vectors fixed under an Iwahori subgroup},
Invent. Math. {\bf 35} (1976), 233--259.

\bibitem[Bo2]{Bo2}
A. Borel, {\em Linear Algebraic Groups}, Second edition, Graduate Texts in Mathematics {\bf 126}, Springer-Verlag, New York, 1991.

\bibitem[BW]{BW}
A. Borel and N. Wallach, {\em Continuous cohomology, discrete subgroups, and representations of reductive groups},
Second edition, American Mathematical Society, Providence, RI, 2000.

\bibitem[BT1]{BT1}
F. Bruhat and J.  Tits,  {\em Groupes r\'eductifs sur un corps local, I},  Inst. Hautes \'Etudes Sci. Publ. Math. {\bf 41} (1972), 5--251.

\bibitem[BT2]{BT2}
F. Bruhat and J.  Tits,  {\em Groupes r\'eductifs sur un corps local, II}, Inst. Hautes \'Etudes Sci. Publ. Math. {\bf 60} (1984), 197--376.

\bibitem[Ca]{Ca}
W. Casselman, {\em Introduction to the theory of admissible representations
of $p$-adic reductive groups}, preprint, avalaible at
http://www.math.ubc.ca/~cass/research/pdf/p-adic-book.pdf

\bibitem[Cu]{Cu}
C. Curtis, {\em Representations of finite groups of Lie type}, Bull. Amer. Math. Soc.
{\bf 1} (1979), 721--757.

\bibitem[DB]{DB}
S. DeBacker, {\em Some applications of Bruhat--Tits theory to harmonic analysis on a reductive p-adic group},
Michigan Math. J. {\bf 50} (2002),  241--261.

\bibitem[GG]{GG}
S. Garibaldi and R. Guralnick, {\em Essential dimension of exceptional groups, including bad characteristic}, Archiv der Mathematik {\bf 107} (2016), 101--119.

\bibitem[HR]{HR}
T. Haines and M. Rapoport, {\em On parahoric subgroups}, Adv. Math. 219 (2008), 188--198.

\bibitem[KP]{KP}
D. Kazhdan and A. Polishchuk, {\em Generalization of a theorem of Waldspurger to nice representations}, in
{\em The orbit method in geometry and physics}, (Marseille, 2000), 197--242, Progr. Math. {\bf 213}, Birkh\"auser Boston, Boston, MA, 2003.

\bibitem[KV1]{KV}
D. Kazhdan and Y. Varshavsky, {\em Endoscopic decomposition of
certain depth zero representations}, in {\em Studies in Lie
theory}, 223--301, Progr. Math. {\bf 243}, Birkh\"auser, Boston, 2006.

\bibitem[KV2]{KV2} D. Kazhdan and Y. Varshavsky,
{\em On endoscopic transfer of Deligne-Lusztig functions}, Duke Math. J. {\bf 161} (2012), 675--732.

\bibitem[KV3]{KV3}
D. Kazhdan and Y. Varshavsky, {\em Geometric approach to parabolic induction}, arXiv:1504.07859.

\bibitem[Ko]{Ko}
R. Kottwitz, {\em Isocrystals with additional structure, II}, Compos. Math. {\bf 109} (1997), 255--339.

\bibitem[La]{La}
E. Landvogt, {\em A compactification of the Bruhat--Tits building},
Lecture Notes in Mathematics, {\bf 1619}, Springer-Verlag, Berlin, 1996.



\bibitem[MS]{MS}
R. Meyer and M. Solleveld, {\em Resolutions for representations of reductive $p$-adic groups via their buildings},
J. Reine Angew. Math. {\em 647} (2010), 115--150.

\bibitem[Mo]{Mo}
A. Moy, {\em Computations with Bernstein projectors of $SL(2)$}, preprint, arXiv:1511.01254.

\bibitem[MP1]{MP1}
A. Moy and G. Prasad, {\em Unrefined minimal $K$-types for
$p$-adic groups}, Invent. Math. {\bf 116} (1994), 393--408.

\bibitem[MP2]{MP2}
A. Moy and G. Prasad, {\em Jacquet functors and unrefined minimal
K-types}, Comment. Math. Helv. {\bf 71} (1996), 98--121.

\bibitem[PlR]{PlR}
V. Platonov and A. Rapinchuk, {\em Algebraic groups and number theory}, Pure and Applied Mathematics {\bf 139}, Academic Press, Inc., Boston, MA, 1994.

\bibitem[PrR]{PrR}
G. Prasad and M. Raghunathan, {\em Topological central extensions of semisimple groups over local fields}, Ann. of Math. (2) {\bf  119} (1984), 143--201.

\bibitem[SS]{SS}
P. Schneider and U. Stuhler, {\em Representation theory and sheaves on the Bruhat--Tits building},
Inst. Hautes \'Etudes Sci. Publ. Math. {\bf 85} (1997), 97--191.


\bibitem[Ti]{Ti}
J. Tits, {\em Reductive groups over local fields}, in {\em  Automorphic forms, representations and L-functions}, Part {\bf 1}, pp. 29--69, Proc. Sympos. Pure Math., XXXIII, Amer. Math. Soc., Providence, R.I., 1979.

\bibitem[Vi]{Vi}
M.-F. Vign\'eras, {\em Cohomology of sheaves on the building and $R$-representations}, Invent. Math. {\bf 127} (1997), 349--373.


\bibitem[Wa]{Wa}
 J.-L. Waldspurger,
 {\em Le lemme fondamental implique le transfert}, Comp. Math.
{\bf 105} (1997), 153--236.









































\end{thebibliography}
\end{document}